\definecolor{myblue}{rgb}{.8, .8, 1}
\crefname{equation}{}{}
\crefname{chapter}{Chapter}{Chapters}
\crefname{item}{item}{items}
\crefname{figure}{Figure}{Figures}
\crefname{theorem}{Theorem}{Theorems}
\crefname{lemma}{Lemma}{Lemmas}
\crefname{proposition}{Proposition}{Propositions}
\crefname{corollary}{Corollary}{Corollarys}
\crefname{definition}{Definition}{Definitions}
\crefname{fact}{Fact}{Facts}
\crefname{example}{Example}{Examples}
\crefname{algorithm}{Algorithm}{Algorithms}
\crefname{remark}{Remark}{Remarks}
\crefname{note}{Note}{Notes}
\crefname{notation}{Notation}{Notations}
\crefname{case}{Case}{Cases}
\crefname{exercise}{Exercise}{Exercises}
\crefname{question}{Question}{Questions}
\crefname{claim}{Claim}{Claims}
\crefname{enumi}{}{}
\numberwithin{equation}{section}
\theoremstyle{plain}
\newtheorem{theorem}{Theorem}[section]
\newtheorem{corollary}[theorem]{Corollary}
\newtheorem{fact}[theorem]{Fact}
\newtheorem{lemma}[theorem]{Lemma}
\newtheorem{proposition}[theorem]{Proposition}
\theoremstyle{definition}
\newtheorem{definition}[theorem]{Definition}
\newtheorem{remark}[theorem]{Remark}
\newcommand{\zer}{\ensuremath{\operatorname{zer}}}
\newcommand{\weakly}{\ensuremath{{\;\operatorname{\rightharpoonup}\;}}}
\newcommand{\dom}{\ensuremath{\operatorname{dom}}}
\newcommand{\gra}{\ensuremath{\operatorname{gra}}}
\newcommand{\Fix}{\ensuremath{\operatorname{Fix}}}
\newcommand{\Id}{\ensuremath{\operatorname{Id}}}
\newcommand{\dist}{\ensuremath{\operatorname{d}}}
\newcommand{\Pro}{\ensuremath{\operatorname{P}}}
\newcommand{\R}{\ensuremath{\operatorname{R}}}
\newcommand{\J}{\ensuremath{\operatorname{J}}}
\newcommand{\Range}{\ensuremath{\operatorname{ran}}}
\providecommand{\abs}[1]{\left|#1\right|}
\providecommand{\norm}[1]{\left\lVert#1\right\rVert}
\providecommand{\innp}[1]{\left\langle#1\right\rangle}
\providecommand{\lr}[1]{\left(#1\right)}
\begin{document}

\title{On the Stability of Krasnosel'ski\v{\i}-Mann Iterations}

\author{
         Hui\ Ouyang\thanks{
                 Mathematics, University of British Columbia, Kelowna, B.C.\ V1V~1V7, Canada.
                 E-mail: \href{mailto:hui.ouyang@alumni.ubc.ca}{\texttt{hui.ouyang@alumni.ubc.ca}}.}
                 }

\date{March 25, 2022}

\maketitle

\begin{abstract}
	\noindent
Firstly, we invoke the weak convergence (resp.\,strong convergence) of  translated basic methods involving nonexpansive operators to establish    the weak convergence (resp.\,strong convergence) of the associated method with both perturbation and approximation. Then we employ the technique obtaining the result above  to extend convergence results from the classic Krasnosel'ski\v{\i}-Mann iterations to their relaxation variants for finding a common fixed point of associated nonexpansive operators. At last, we show applications on generalized proximal point algorithms for solving monotone inclusion problems. 
\end{abstract}

{\small
\noindent
{\bfseries 2020 Mathematics Subject Classification:}
{
	Primary 65J15,  47J25, 47H05;  
	Secondary 47H09, 47H10, 90C25. 
}

\noindent{\bfseries Keywords:}
Krasnosel'ski\v{\i}-Mann Iterations, stability, generalized proximal point algorithms,
maximally monotone operators,  firmly nonexpansiveness, weak convergence, strong convergence
}

\section{Introduction} \label{sec:Introduction}
 Throughout this paper,  
 \begin{align*}
 \text{$\mathcal{H}$ is a real Hilbert space},
 \end{align*}
 with inner product $\innp{\cdot,\cdot}$ and induced norm $\norm{\cdot}$. We  denote  the set of all nonnegative integers by $\mathbb{N} :=\{0,1,2,\ldots\}$.

 Let $T: \mathcal{H} \to \mathcal{H}$ be a nonexpansive operator and let $x_{0} \in \mathcal{H}$.  
A generalization of the \emph{Banach-Picard iteration}, $(\forall k \in \mathbb{N})$ $x_{k+1} =Tx_{k}$, is the \emph{Krasnosel'ski\v{\i}-Mann iterations}:   
\begin{align*} 
(\forall k \in \mathbb{N}) \quad x_{k+1} = x_{k} +\lambda_{k} \lr{Tx_{k} -x_{k}},
\end{align*}
where $(\forall k \in \mathbb{N})$ $\lambda_{k} \in \left[0,1\right]$. 
Let $(\forall k \in \mathbb{N})$ $T_{k} :\mathcal{H} \to \mathcal{H}$ be nonexpansive operators with $  \cap_{k \in \mathbb{N}} \Fix T_{k}  \neq \varnothing$, let $(\eta_{k})_{k \in \mathbb{N}}$ be in $\mathbb{R}_{+}$, and let $(e_{k})_{k \in \mathbb{N}}$ be a sequence of \emph{error terms} in $\mathcal{H}$. In this work, we mainly investigate the following relaxation variants of the Krasnosel'ski\v{\i}-Mann iterations. 
\begin{enumerate}
	\item The \emph{inexact  Krasnosel'ski\v{\i}-Mann iterations} 
	follows the iteration scheme:
	\begin{align*} 
	(\forall k \in \mathbb{N}) \quad x_{k+1} = x_{k} +\lambda_{k} \lr{T x_{k} -x_{k}} + \eta_{k}e_{k}.
	\end{align*}
	\item The \emph{$($exact$)$   non-stationary Krasnosel'ski\v{\i}-Mann iterations} 
	is the iteration scheme:
	\begin{align*}
	(\forall k \in \mathbb{N}) \quad x_{k+1} = x_{k} +\lambda_{k} \lr{T_{k}x_{k} -x_{k}}.
	\end{align*}
	\item 
	The \emph{inexact    non-stationary Krasnosel'ski\v{\i}-Mann iterations}   is the iteration scheme:
	\begin{align} \label{eq:KMxk+1inexact}
	(\forall k \in \mathbb{N}) \quad x_{k+1} = x_{k} +\lambda_{k} \lr{T_{k}x_{k} -x_{k}} + \eta_{k}e_{k}.
	\end{align}
	In particular, if   $(\forall k \in \mathbb{N})$ $\eta_{k}=\lambda_{k}$, then the iteration scheme  \cref{eq:KMxk+1inexact} is  always written as 
	\begin{align*} 
	(\forall k \in \mathbb{N}) \quad x_{k+1} = x_{k} +\lambda_{k} \lr{T_{k}x_{k} +e_{k} -x_{k}}.
	\end{align*}
\end{enumerate}

Notice that  Krasnosel'ski\v{\i}-Mann iterations  can be specified as the generalized  proximal point algorithm \cite{EcksteinBertsekas1992}, 
the forward-backward splitting algorithm \cite{Mercier1979}, the  Douglas-Rachford splitting algorithm \cite{DouglasRachford1956,LionsMercier1979},  the Peaceman-Rachford splitting algorithm \cite{PeacemanRachford1955},
the alternating direction method of multipliers \cite{GabayMercier1976}, the three-operator splitting schemes \cite{DavisYin2017} and so on
(see, e.g., \cite{LiangFadiliPeyre2016,Matsushita2017} for details). 
By studying relaxation variants of the Krasnosel'ski\v{\i}-Mann iterations, we are able to produce convergence results on generalized versions of optimization algorithms mentioned above if we specify the associated nonexpansive operators accordingly. 

\emph{The goal of this work is to investigate the stability of Krasnosel'ski\v{\i}-Mann iterations. In particular, we shall deduce the weak and strong convergence of relaxation variants of the Krasnosel'ski\v{\i}-Mann iterations from the corresponding convergence results on the classic Krasnosel'ski\v{\i}-Mann iterations for finding a common fixed point  of associated nonexpansive operators. Moreover, we will also apply our convergence results to generalized proximal point algorithms for finding a zero of the related maximally monotone operator.}

We summarize main results of this work as follows. 
\begin{itemize}
	\item[\textbf{R1:}] Applying \cref{theorem:GkWeakStrongConvergence}, we can deduce  the weak convergence (resp.\,strong convergence) of the iteration method (involving nonexpansive operators) with both approximation and perturbation from 
		 the weak convergence (resp.\,strong convergence) of associated translated basic methods.  
	
	\item[\textbf{R2:}] One $R$-linear convergence result on the exact version of  non-stationary Krasnosel'ski\v{\i}-Mann iterations is shown in \cref{theorem:KMI}. In addition, the weak and strong convergence of the inexact   non-stationary Krasnosel'ski\v{\i}-Mann iterations  are given in \Cref{theorem:TkxikConver,proposition:KMI}. 
	
	\item[\textbf{R3:}] \cref{theorem:exactGPPA} establishes a $R$-linear convergence result on the exact version of generalized proximal point algorithms. 
Moreover, \Cref{theorem:JckAweakconverckc,theorem:JckAweakconverckc,theorem:GPPAWeakStability,theorem:inexactGPPAstrong} provide sufficient conditions for the weak and strong convergence of generalized proximal point algorithms.
	
\end{itemize}

This work is organized as follows. In \cref{sec:AuxiliaryResults},  except for some fundamental definitions and facts, we also show some auxiliary results to facilitate proofs in subsequent sections. In \cref{section:PerturbedApproximate}, we specify \cite[Lemma~2.1]{Lemaire1996} (that   originates from  \cite[Remark~14]{BrezisLions1978}) to weak and norm topologies in Hilbert spaces and deduce the weak convergence (resp.\,strong convergence) of the perturbed and approximate iteration method involving nonexpansive operators   from 
the weak convergence (resp.\,strong convergence) of associated translated basic methods. 
This  technique on the proof of convergence is invoked in \cref{sec:ConvergenceKMI} where we extend some weak and strong convergence results from the classic Krasnosel'ski\v{\i}-Mann iterations to their relaxation variants.   In  \cref{section:GPPA}, we apply main results in \Cref{section:PerturbedApproximate,sec:ConvergenceKMI} to generalized proximal point algorithms for solving monotone inclusion problems. At last, we summarize this work and provide some directions on future work in
\cref{section:Conclusion}.

We now turn to the notation used in this work. $\Id$ stands for the \emph{identity mapping}. 
Denote by  $\mathbb{R}_{+}:=\{\lambda \in \mathbb{R} ~:~ \lambda \geq 0 \}$ and $\mathbb{R}_{++}:=\{\lambda \in \mathbb{R} ~:~ \lambda >0 \}$.  
Let $\bar{x} $ be in $ \mathcal{H}$ and let $r \in \mathbb{R}_{+}$.
$B[\bar{x};r]:= \{ y\in \mathcal{H} ~:~ \norm{y-\bar{x}} \leq r \}$ is the \emph{closed ball centered at $\bar{x}$ with radius $r$}.  
Let $C$ be a nonempty set of $\mathcal{H}$. Then $(\forall x \in \mathcal{H})$ $\dist \lr{x, C} = \inf_{y \in C} \norm{x-y}$. If $C$ is   nonempty closed and convex, then the \emph{projector} (or \emph{projection operator}) onto $C$ is the operator, denoted by $\Pro_{C}$,  that maps every point in $\mathcal{H}$ to its unique projection onto $C$.  $C$ is \emph{affine} if $C \neq \varnothing$ and $(\forall x \in C)$ $(\forall y \in C )$ $(\forall \lambda \in \mathbb{R})$ $\lambda x + (1-\lambda) y \in C$.
Let $\mathcal{D} $ be a nonempty subset of $\mathcal{H}$ and let $T: \mathcal{D} \to  \mathcal{H}$. $\Fix T :=\{ x \in \mathcal{D}~:~ x = T(x) \}$ is the \emph{set of fixed points  of $T$}. 
Let $A: \mathcal{H} \to 2^{\mathcal{H}}$ be a set-valued operator. Then $A$ is characterized by its \emph{graph} $\gra A:= \{ (x,u) \in \mathcal{H} \times \mathcal{H} ~:~ u\in A(x) \}$. The \emph{inverse} of $A$, denoted by $A^{-1}$, is defined through its graph $\gra A^{-1} :=  \{ (u,x) \in \mathcal{H} \times \mathcal{H} ~:~ (x,u) \in \gra A \}$.
The  \emph{domain},  \emph{range}, and \emph{set of zeros}  of $A$   are  defined by $\dom A := \left\{ x \in \mathcal{H} ~:~ Ax \neq \varnothing \right\}$,  $\Range A := \left\{ y\in \mathcal{H} ~:~ \exists~ x \in \mathcal{H} \text{ s.t. } y\in Ax \right\}$, and $\zer A :=  \left\{ x \in \mathcal{H} ~:~ 0 \in Ax \right\}$, respectively. 
Let $(y_{k})_{k \in \mathbb{N}}$ be a sequence in $\mathcal{H}$.  $\Omega \lr{ \lr{y_{k}}_{k \in \mathbb{N}} }$ stands for the \emph{set of all weak sequential clusters of the sequence $(y_{k})_{k \in \mathbb{N}}$}.  If  $(y_{k})_{k \in \mathbb{N}}$ \emph{converges strongly} to $\bar{y}$, then we denote by $y_{k} \to \bar{y}$.  $(y_{k})_{k \in \mathbb{N}}$ \emph{converges weakly} to  $\bar{y} $ if, for every $u \in \mathcal{H}$, $\innp{y_{k},u} \rightarrow \innp{y,u}$; in symbols, $y_{k} \weakly \bar{y}$. Suppose that $(y_{k})_{k \in \mathbb{N}}$ converges to a point $\bar{y} \in \mathcal{H}$. 
 $(y_{k})_{k \in \mathbb{N}}$  is \emph{$R$-linearly convergent}  (or \emph{converges $R$-linearly}) to $\bar{y}$ if $\limsup_{k \to \infty} \lr{\norm{y_{k} -\bar{y}}}^{\frac{1}{k}} <1$.
 For other notation not explicitly defined here, we refer the reader to \cite{BC2017}.

\section{Auxiliary Results} \label{sec:AuxiliaryResults}
In this section, we collect some definitions and facts mainly from the book \cite{BC2017}, which are fundamental to our analysis later. Moreover, we also show some    auxiliary results to simplify our proofs in subsequent sections.

The following result plays an essential role to prove multiple results in the next sections. 
\begin{fact} {\rm \cite[Lemma~5.31]{BC2017}}  \label{fact:alphakINEQ} 
	Let $(\alpha_{k})_{k \in \mathbb{N}}$, $(\beta_{k})_{k \in \mathbb{N}}$, $(\gamma_{k})_{k \in \mathbb{N}}$, and $(\varepsilon_{k})_{k \in \mathbb{N}}$  be sequences in $\mathbb{R}_{+}$ such that $\sum_{k \in \mathbb{N}} \gamma_{k} < \infty$,  $\sum_{k \in \mathbb{N}} \varepsilon_{k} < \infty$, and
	\begin{align*}
	(\forall k \in \mathbb{N}) \quad \alpha_{k+1} \leq (1+\gamma_{k}) \alpha_{k} - \beta_{k} + \varepsilon_{k}.
	\end{align*}
	Then $\lim_{k \to \infty} \alpha_{k}$ exists in $\mathbb{R}_{+}$   and $\sum_{k \in \mathbb{N}} \beta_{k} < \infty$.
\end{fact}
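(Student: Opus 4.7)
The plan is to neutralize the multiplicative factor $(1+\gamma_{k})$ via a rescaling trick, reducing the inequality to a purely telescoping one in the spirit of the deterministic Robbins--Siegmund lemma. Set $P_{0} := 1$ and $P_{k} := \prod_{j=0}^{k-1}(1+\gamma_{j})$ for $k \geq 1$. Using the elementary bound $1+\gamma_{j} \leq e^{\gamma_{j}}$ together with $\sum_{j \in \mathbb{N}} \gamma_{j} < \infty$, the sequence $(P_{k})$ is nondecreasing in $[1,\infty)$ and converges to some $P_{\infty} \in [1,\infty)$. Introduce the rescaled quantities
\[
\tilde{\alpha}_{k} := \frac{\alpha_{k}}{P_{k}}, \qquad \tilde{\beta}_{k} := \frac{\beta_{k}}{P_{k+1}}, \qquad \tilde{\varepsilon}_{k} := \frac{\varepsilon_{k}}{P_{k+1}},
\]
and divide the hypothesis by $P_{k+1} = (1+\gamma_{k})P_{k}$ to obtain the simplified recurrence
\[
\tilde{\alpha}_{k+1} \leq \tilde{\alpha}_{k} - \tilde{\beta}_{k} + \tilde{\varepsilon}_{k}, \qquad \text{with } \sum_{k \in \mathbb{N}} \tilde{\varepsilon}_{k} \leq \sum_{k \in \mathbb{N}} \varepsilon_{k} < \infty.
\]

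Next I would absorb the additive perturbation $\tilde{\varepsilon}_{k}$ into a monotone auxiliary sequence. Define $\mu_{k} := \tilde{\alpha}_{k} + \sum_{j \geq k} \tilde{\varepsilon}_{j}$, which is well-defined in $\mathbb{R}_{+}$ by the summability above. A direct computation using the simplified recurrence shows
\[
\mu_{k+1} \;\leq\; \tilde{\alpha}_{k} - \tilde{\beta}_{k} + \tilde{\varepsilon}_{k} + \sum_{j \geq k+1} \tilde{\varepsilon}_{j} \;=\; \mu_{k} - \tilde{\beta}_{k} \;\leq\; \mu_{k}.
\]
Hence $(\mu_{k})$ is nonincreasing and bounded below by $0$, so it converges in $\mathbb{R}_{+}$. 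Since $\sum_{j \geq k} \tilde{\varepsilon}_{j} \to 0$ as $k \to \infty$ (tail of a convergent series), this forces $\tilde{\alpha}_{k}$ to converge to the same limit, which lies in $\mathbb{R}_{+}$.

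Finally, from $\alpha_{k} = P_{k}\tilde{\alpha}_{k}$ and $P_{k} \to P_{\infty} \in [1,\infty)$, I conclude that $\lim_{k \to \infty} \alpha_{k}$ exists in $\mathbb{R}_{+}$. Telescoping $\tilde{\beta}_{k} \leq \mu_{k} - \mu_{k+1}$ over $k \in \mathbb{N}$ gives $\sum_{k \in \mathbb{N}} \tilde{\beta}_{k} \leq \mu_{0} < \infty$, and since $\beta_{k} = P_{k+1}\tilde{\beta}_{k} \leq P_{\infty}\tilde{\beta}_{k}$, one obtains $\sum_{k \in \mathbb{N}} \beta_{k} < \infty$. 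The argument has no real obstacle; the only nonobvious ingredient is the opening rescaling, which converts the multiplicative perturbation $(1+\gamma_{k})$ into a harmless constant factor so that a standard monotone convergence plus telescoping finishes the job.
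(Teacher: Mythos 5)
Your proof is correct. Note, however, that the paper does not prove this statement at all: it is imported verbatim as a Fact with the citation \cite[Lemma~5.31]{BC2017}, so there is no in-paper argument to compare against. Your rescaling by $P_{k}=\prod_{j=0}^{k-1}(1+\gamma_{j})$ (convergent since $1+\gamma_{j}\leq e^{\gamma_{j}}$ and $\sum_{j}\gamma_{j}<\infty$), followed by absorbing the tail $\sum_{j\geq k}\tilde{\varepsilon}_{j}$ into a nonincreasing auxiliary sequence and telescoping, is the standard route to this deterministic Robbins--Siegmund-type lemma and is essentially the argument given in the cited reference; every step checks out, including the bounds $\tilde{\varepsilon}_{k}\leq\varepsilon_{k}$ and $\beta_{k}\leq P_{\infty}\tilde{\beta}_{k}$ coming from $1\leq P_{k+1}\leq P_{\infty}<\infty$.
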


\subsection{Nonexpansive operators}

All algorithms studied in this work are based on nonexpansive operators. In fact, a lot of iteration mappings in optimization algorithms are  nonexpansive. 
\begin{definition} {\rm \cite[Definition~4.1]{BC2017}} \label{definition:Nonexpansive}
	Let $D$ be a nonempty subset of $\mathcal{H}$ and let $T: D \to \mathcal{H}$. Then $T$ is 
	\begin{enumerate}
		\item \label{definition:Nonexpansive:quasinonex} \index{quasinonexpansive operator} \emph{quasinonexpansive} if $	\lr{\forall x \in D}$ $ \lr{\forall y \in \Fix T}$ $\norm{Tx -y} \leq \norm{x-y}$;
		\item  \label{definition:Nonexpansive:nonexp} \index{nonexpansive operator} \emph{nonexpansive} if it is Lipschitz continuous with constant $1$, i.e., $	(\forall  x \in D)$ $ (\forall y \in D) $ $ \norm{Tx-Ty} \leq \norm{x-y}$;
		\item  \label{definition:Nonexpansive:firmlynonexp} \index{firmly nonexpansive operator} 
		\emph{firmly nonexpansive} if $(\forall  x \in D)$ $ (\forall y \in D)$ $ \norm{Tx-Ty}^{2} +\norm{(\Id -T)x - (\Id -T)y}^{2} \leq \norm{x-y}^{2}$.
	\end{enumerate}
\end{definition}

The following \cref{fact:quasinonexCC} is basic for  the analysis of many results later. 
\begin{fact} {\rm \cite[Proposition~4.23(ii)]{BC2017}} \label{fact:quasinonexCC}
	Let $D$ be a nonempty closed and convex subset of $\mathcal{H}$ and let $T : D \to \mathcal{H}$ be quasinonexpansive. Then $\Fix T$ is closed and convex. 
\end{fact}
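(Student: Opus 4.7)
The plan is to address closedness and convexity as two separate short arguments, each relying only on the defining inequality of quasinonexpansiveness together with the fact that $D$ is closed and convex.

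For closedness, I would take an arbitrary sequence $(y_n)_{n \in \mathbb{N}}$ in $\Fix T$ converging to some $y \in \HH$. Because $D$ is closed and $(y_n) \subseteq D$, the limit $y$ lies in $D$, so $Ty$ is defined. Applying quasinonexpansiveness with $x = y$ and with $y_n$ playing the role of an element of $\Fix T$ gives $\norm{Ty - y_n} \leq \norm{y - y_n}$ for every $n$. Letting $n \to \infty$ forces $Ty = y$, hence $y \in \Fix T$. No continuity assumption on $T$ is needed; the fixed-point property itself supplies enough rigidity.

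For convexity, fix $y_1, y_2 \in \Fix T$ and $\lambda \in [0,1]$, and set $y := \lambda y_1 + (1-\lambda) y_2$, which lies in $D$ by convexity of $D$. Quasinonexpansiveness yields
\begin{align*}
\norm{Ty - y_1} \leq \norm{y - y_1} = (1-\lambda)\norm{y_1 - y_2}, \quad \norm{Ty - y_2} \leq \norm{y - y_2} = \lambda \norm{y_1 - y_2}.
\end{align*}
I would then exploit the Hilbert space identity $\norm{\lambda a + (1-\lambda) b}^2 = \lambda \norm{a}^2 + (1-\lambda) \norm{b}^2 - \lambda(1-\lambda) \norm{a - b}^2$ with $a = Ty - y_1$ and $b = Ty - y_2$, so that $\lambda a + (1-\lambda) b = Ty - y$ and $a - b = y_2 - y_1$. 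Substituting the two bounds above gives
\begin{align*}
\norm{Ty - y}^2 \leq \lambda(1-\lambda)^2 \norm{y_1 - y_2}^2 + (1-\lambda)\lambda^2 \norm{y_1 - y_2}^2 - \lambda(1-\lambda)\norm{y_1 - y_2}^2 = 0,
\end{align*}
forcing $Ty = y$ and therefore $y \in \Fix T$.

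The potential obstacle is the convexity step: simply bounding $\norm{Ty - y_1}$ and $\norm{Ty - y_2}$ individually is not enough in a general metric space, because their intersection of balls may be nontrivial. The Hilbert space parallelogram-type identity is precisely the tool that collapses these two inequalities into the desired equality $Ty = y$, and is the only genuinely nontrivial ingredient in the argument.
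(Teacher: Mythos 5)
Your proof is correct. Note that the paper itself imports this statement as a Fact from \cite[Proposition~4.23(ii)]{BC2017} and gives no proof, so the only comparison available is with the textbook argument: there, one writes $\Fix T = D \cap \bigcap_{x \in D}\{y \in \HH : \norm{Tx-y} \leq \norm{x-y}\}$ (the reverse inclusion follows by taking $x=y$), observes that each set in the intersection is a closed half-space because $\norm{Tx-y}^2 \leq \norm{x-y}^2$ is an affine inequality in $y$, and concludes closedness and convexity in one stroke. Your route is a direct two-part verification instead: the closedness step is a standard limit argument that correctly uses closedness of $D$ and needs no continuity of $T$, and the convexity step correctly deploys the identity $\norm{\lambda a+(1-\lambda)b}^2=\lambda\norm{a}^2+(1-\lambda)\norm{b}^2-\lambda(1-\lambda)\norm{a-b}^2$ (\cite[Corollary~2.15]{BC2017}, the same identity this paper uses in \cref{lemma:yxzx}) to collapse the two ball constraints to $Ty=y$; the algebra $\lambda(1-\lambda)^2+(1-\lambda)\lambda^2-\lambda(1-\lambda)=0$ checks out. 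The half-space argument is slightly slicker and generalizes to show $\Fix T$ is an intersection of half-spaces with $D$, while yours is more elementary and makes explicit where Hilbert-space geometry (rather than a general metric space) enters; both are complete proofs.
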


Although \cite[Definition~4.33]{BC2017} considers only the case $\alpha \in \left]0,1\right[\,$, we extend the definition to $\alpha \in \left]0,1\right]$. Clearly, by \cref{definition:averaged}, $T$ is $1$-averaged if and only if $T$ is nonexpansive. This extension will facilitate our   statements in the following sections. 
\begin{definition} {\rm \cite[Definition~4.33]{BC2017}} 	\label{definition:averaged}
	\index{averaged nonexpansive operator} 
	Let $D$ be a nonempty subset of $\mathcal{H}$,  let $T: D \to \mathcal{H}$ be nonexpansive, and let $\alpha \in \left]0,1\right]$. Then $T$ is \emph{averaged with constant $\alpha$}, or  \emph{$\alpha$-averaged}, if there exists a nonexpansive operator $R: D \to \mathcal{H}$ such that $T=(1-\alpha) \Id +\alpha R$.	
\end{definition}	

Clearly, by \Cref{definition:Nonexpansive,definition:averaged}, firmly nonexpansive operators and averaged operators must be nonexpansive operators. 
\begin{fact} {\rm  \cite[Remark~4.34(iii)]{BC2017} } \label{fact:firmlynonexpansiveaveraged}
	Let $D$ be a nonempty subset of $\mathcal{H}$ and  let $T: D \to \mathcal{H}$. Then $T$ is firmly nonexpansive if and only if it is $\frac{1}{2}$-averaged.  
\end{fact}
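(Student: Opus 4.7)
The plan is to rewrite the $\tfrac{1}{2}$-averaged condition in a form directly comparable to the firm nonexpansiveness inequality, and then to verify that both reduce to the same inner-product inequality. By \cref{definition:averaged}, $T$ is $\tfrac{1}{2}$-averaged precisely when $T = \tfrac{1}{2}\Id + \tfrac{1}{2} R$ for some nonexpansive operator $R : D \to \mathcal{H}$, and solving this relation forces $R = 2T - \Id$. The whole task therefore reduces to showing the single equivalence: the operator $R := 2T - \Id$ is nonexpansive if and only if $T$ is firmly nonexpansive.

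For this I would carry out a direct algebraic manipulation relying only on the expansion of the norm. Fix $x, y \in D$ and set $a := Tx - Ty$ and $b := (\Id - T)x - (\Id - T)y$, so that $x - y = a + b$ and $Rx - Ry = 2(Tx - Ty) - (x - y) = a - b$. Expanding both squared norms gives
\begin{align*}
\norm{x - y}^{2} &= \norm{a}^{2} + 2\innp{a, b} + \norm{b}^{2}, \\
\norm{Rx - Ry}^{2} &= \norm{a}^{2} - 2\innp{a, b} + \norm{b}^{2}.
\end{align*}
Subtracting these identities yields $\norm{x-y}^{2} - \norm{Rx - Ry}^{2} = 4\innp{a, b}$, so nonexpansiveness of $R$ is equivalent to $\innp{a, b} \geq 0$. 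On the other hand, the first display rearranges to $\norm{a}^{2} + \norm{b}^{2} = \norm{x-y}^{2} - 2\innp{a, b}$, whence the firm nonexpansiveness inequality $\norm{a}^{2} + \norm{b}^{2} \leq \norm{x-y}^{2}$ is also equivalent to $\innp{a, b} \geq 0$. Both conditions thus collapse to the monotonicity-type inequality
\[
\innp{Tx - Ty,\ (\Id - T)x - (\Id - T)y} \geq 0,
\]
which establishes both implications simultaneously.

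There is essentially no analytical obstacle here; the argument is routine once one spots the substitution $a - b$ for $Rx - Ry$. The only small care required is in handling the averaging convention: one must verify that the bijection $T = \tfrac{1}{2}\Id + \tfrac{1}{2} R \iff R = 2T - \Id$ preserves the domain constraints (which it does, since both $T$ and $R$ are maps $D \to \mathcal{H}$), and observe that the extension of \cref{definition:averaged} to $\alpha = 1$ is irrelevant for the case $\alpha = \tfrac{1}{2}$ used here.
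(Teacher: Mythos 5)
Your argument is correct and complete. The paper itself offers no proof of this fact---it is quoted directly from \cite[Remark~4.34(iii)]{BC2017}---and the standard proof there proceeds exactly as you do: observe that $T$ is $\tfrac{1}{2}$-averaged if and only if the reflector $R=2T-\Id$ is nonexpansive, and then check by expanding the squared norms that both nonexpansiveness of $R$ and firm nonexpansiveness of $T$ are equivalent to $\innp{Tx-Ty, (\Id-T)x-(\Id-T)y}\geq 0$. Your closing remark about the domain convention and the irrelevance of the $\alpha=1$ extension is also accurate; the only point worth making explicit is that in the direction ``firmly nonexpansive $\Rightarrow$ $\tfrac{1}{2}$-averaged'' the nonexpansiveness of $T$ required by \cref{definition:averaged} is automatic, either from the firm nonexpansiveness inequality itself or from the representation $T=\tfrac{1}{2}\Id+\tfrac{1}{2}R$ with $R$ nonexpansive.
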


In fact,  \cref{fact:Averagedlambdaalpha} extends \cite[Proposition~4.40]{BC2017}  from $\alpha \in \left]0,1\right[$ and $\lambda \in \left]0,\frac{1}{\alpha}\right[$ to $\alpha \in \left]0,1\right]$ and $\lambda \in \left]0,\frac{1}{\alpha}\right]$, but it is not difficult to see that the extension is trivial by  \cref{definition:averaged}.
\begin{fact} {\rm  \cite[Proposition~4.40]{BC2017} } \label{fact:Averagedlambdaalpha}
	Let $D$ be a nonempty subset of $\mathcal{H}$,  let $T: D \to \mathcal{H}$,  let $\alpha \in \left]0,1\right]$, and let $\lambda \in \left]0,\frac{1}{\alpha}\right]$. Then $T$ is $\alpha$-averaged if and only if $(1-\lambda) \Id +\lambda T$ is $\lambda\alpha$-averaged. 
\end{fact}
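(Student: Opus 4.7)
The plan is to give a direct computational proof using the extended \cref{definition:averaged}. The key observation is that the original Proposition 4.40 in \cite{BC2017} was stated only for $\alpha \in \left]0,1\right[$ and $\lambda \in \left]0,\tfrac{1}{\alpha}\right[$, and the obstruction to extending it was purely notational: the case $\lambda\alpha = 1$ falls outside the domain of the original definition. Since \cref{definition:averaged} here already admits the endpoint $\alpha = 1$ (identifying $1$-averaged with nonexpansive), no new content is needed.

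For the forward direction, I would assume $T$ is $\alpha$-averaged and pick a nonexpansive $R:D\to\mathcal{H}$ with $T = (1-\alpha)\Id + \alpha R$. Substituting yields
\begin{align*}
(1-\lambda)\Id + \lambda T
&= (1-\lambda)\Id + \lambda\lr{(1-\alpha)\Id + \alpha R}\\
&= (1-\lambda\alpha)\Id + (\lambda\alpha) R.
\end{align*}
Since $\lambda \in \left]0,\tfrac{1}{\alpha}\right]$ and $\alpha \in \left]0,1\right]$ give $\lambda\alpha \in \left]0,1\right]$, the identity above is precisely a witness that $(1-\lambda)\Id + \lambda T$ is $\lambda\alpha$-averaged in the sense of \cref{definition:averaged}. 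The converse is essentially the same computation run backward: from $(1-\lambda)\Id + \lambda T = (1-\lambda\alpha)\Id + (\lambda\alpha) R$ with $R$ nonexpansive, I would subtract $(1-\lambda)\Id$ and divide by $\lambda > 0$ to recover $T = (1-\alpha)\Id + \alpha R$, exhibiting $T$ as $\alpha$-averaged.

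There is essentially no obstacle here; the only point that requires a moment of care is the boundary behavior. When $\alpha = 1$, being $1$-averaged means $T = R$ is nonexpansive, and the statement reduces to the well-known fact that a relaxation $(1-\lambda)\Id + \lambda T$ of a nonexpansive $T$ with $\lambda \in \left]0,1\right]$ is $\lambda$-averaged, which is again captured by the displayed identity. When $\lambda\alpha = 1$, the conclusion is only that $(1-\lambda)\Id + \lambda T$ is nonexpansive, which is again immediate from the same identity. Thus the proof is a one-line verification once \cref{definition:averaged} is read at the endpoints, and I would simply present the algebraic identity together with the remark that $\lambda\alpha \in \left]0,1\right]$ under the hypothesis.
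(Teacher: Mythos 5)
Your proof is correct and matches what the paper intends: the paper states this as a fact cited from \cite[Proposition~4.40]{BC2017} and merely remarks that the extension to the endpoints $\alpha=1$ and $\lambda=\frac{1}{\alpha}$ is trivial from \cref{definition:averaged}, which is exactly the algebraic identity $(1-\lambda)\Id+\lambda T=(1-\lambda\alpha)\Id+\lambda\alpha R$ that you write out. No gap; your version just makes the one-line verification explicit.
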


\cref{lemma:yxzx}\cref{lemma:yxzx:norm}$\&$\cref{lemma:yxzx:lambdaalpha}  are inspired by \cite[Theorem~5.15 and Proposition~5.34]{BC2017} which work on the weak convergence of the 
 Krasnosel'ski\v{\i}-Mann Iterations. Specifically, the idea of \cref{lemma:yxzx}\cref{lemma:yxzx:norm}$\&$\cref{lemma:yxzx:lambdaalpha} is used in the proofs \cite[Theorem~5.15 and Proposition~5.34]{BC2017}.

\begin{proposition} \label{lemma:yxzx}
	Let $\alpha \in \left]0,1\right]$ and let $T:\mathcal{H} \to \mathcal{H}$ be an $\alpha$-averaged operator with $\Fix T \neq \varnothing$. Let $  x $ and $e  $ be in $ \mathcal{H}$ and  let $\lambda $ 
	and   $\eta $ be in $\mathbb{R}_{+}$. Define
	\begin{align} \label{eq:lemma:yxzx:yzx}
	y_{x} := (1-\lambda) x+\lambda Tx \quad \text{and} \quad 
 z_{x} :=   (1-\lambda) x+\lambda Tx +\eta e=y_{x} +\eta e.
	\end{align}
	Then the   following statements hold.
	\begin{enumerate}
		\item \label{lemma:yxzx:R}  There exists a nonexpansive operator $R:\mathcal{H} \to \mathcal{H}$ such that $\Fix T=\Fix R$, $T-\Id = \alpha (R -\Id)$, and $y_{x}=(1-\alpha \lambda)x+\alpha \lambda Rx$.
		\item  \label{lemma:yxzx:norm} Let  $ \bar{x} \in \Fix T$. Then 
		\begin{align*}
		&\norm{y_{x} -\bar{x}}^{2} \leq \norm{x -\bar{x}}^{2} -\lambda \lr{\frac{1}{\alpha} -\lambda} \norm{x -Tx}^{2};\\
		& \norm{z_{x} -\bar{x}}^{2}  
		\leq  \norm{x -\bar{x}}^{2} -\lambda  \lr{\frac{1}{\alpha} -\lambda} \norm{x -Tx}^{2} +\eta \norm{e} \lr{2\norm{y_{x} -\bar{x}} +\eta \norm{e}}.
		\end{align*}
		\item \label{lemma:yxzx:lambdaalpha} Let $ \bar{x} \in \Fix T$ and   $\lambda \in \left[0, \frac{1}{\alpha}\right]$. Then
		\begin{align*}
		&\norm{y_{x} -\bar{x}} \leq \norm{x -\bar{x}};\\
		&\norm{z_{x} -\bar{x}} \leq \norm{y_{x} -\bar{x}} +\eta \norm{e} \leq \norm{x-\bar{x}} +\eta \norm{e};\\
		&\norm{Ty_{x}-y_{x}} \leq \norm{Tx-x}.
		\end{align*} 
	\end{enumerate}
\end{proposition}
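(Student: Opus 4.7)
The plan is to handle the three items in order, using the representation from (i) as the workhorse for (ii) and (iii).

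For \cref{lemma:yxzx:R}, I would unpack \cref{definition:averaged}: since $T$ is $\alpha$-averaged, there exists a nonexpansive $R:\mathcal{H}\to\mathcal{H}$ with $T=(1-\alpha)\Id+\alpha R$, which gives $T-\Id=\alpha(R-\Id)$ immediately. The equality $\Fix T=\Fix R$ is a one-line check: $Tx=x$ iff $\alpha(Rx-x)=0$ iff $Rx=x$. Substituting the expression for $T$ into $y_{x}=(1-\lambda)x+\lambda Tx$ and simplifying yields $y_{x}=(1-\alpha\lambda)x+\alpha\lambda Rx$.

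For \cref{lemma:yxzx:norm}, the key tool is the convex-combination identity $\norm{(1-t)a+tb}^{2}=(1-t)\norm{a}^{2}+t\norm{b}^{2}-t(1-t)\norm{a-b}^{2}$ applied with $t=\alpha\lambda$, $a=x-\bar{x}$, $b=Rx-\bar{x}$, using the representation from \cref{lemma:yxzx:R}. Because $\bar{x}\in\Fix T=\Fix R$ and $R$ is nonexpansive, $\norm{Rx-\bar{x}}\le\norm{x-\bar{x}}$, and from $T-\Id=\alpha(R-\Id)$ we get $\norm{x-Rx}=\norm{x-Tx}/\alpha$. Combining these and simplifying the coefficient $\alpha\lambda(1-\alpha\lambda)/\alpha^{2}$ to $\lambda(\tfrac{1}{\alpha}-\lambda)$ produces the bound on $\norm{y_{x}-\bar{x}}^{2}$. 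The bound on $\norm{z_{x}-\bar{x}}^{2}$ then follows by expanding $\norm{z_{x}-\bar{x}}^{2}=\norm{(y_{x}-\bar{x})+\eta e}^{2}$ via the cosine identity and applying Cauchy--Schwarz on the cross term.

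For \cref{lemma:yxzx:lambdaalpha}, the first two inequalities are essentially free from \cref{lemma:yxzx:norm}: the assumption $\lambda\in[0,1/\alpha]$ makes $\lambda(\tfrac{1}{\alpha}-\lambda)\ge 0$, so the penalty term is non-positive, which gives $\norm{y_{x}-\bar{x}}\le\norm{x-\bar{x}}$; then the triangle inequality applied to $z_{x}=y_{x}+\eta e$ yields the chain $\norm{z_{x}-\bar{x}}\le\norm{y_{x}-\bar{x}}+\eta\norm{e}\le\norm{x-\bar{x}}+\eta\norm{e}$. The third inequality $\norm{Ty_{x}-y_{x}}\le\norm{Tx-x}$ is the one non-trivial point of the proposition. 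My plan is to reduce it to $R$: since $T-\Id=\alpha(R-\Id)$, the inequality is equivalent to $\norm{Ry_{x}-y_{x}}\le\norm{Rx-x}$. Setting $\mu:=\alpha\lambda\in[0,1]$ and using $y_{x}-x=\mu(Rx-x)$, I would write
\begin{align*}
Ry_{x}-y_{x}=(Ry_{x}-Rx)+(1-\mu)(Rx-x),
\end{align*}
apply the triangle inequality, and use nonexpansiveness of $R$ via $\norm{Ry_{x}-Rx}\le\norm{y_{x}-x}=\mu\norm{Rx-x}$ to obtain $\norm{Ry_{x}-y_{x}}\le\mu\norm{Rx-x}+(1-\mu)\norm{Rx-x}=\norm{Rx-x}$. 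This last reduction via $R$ is the main obstacle; everything else is bookkeeping with the standard identities.
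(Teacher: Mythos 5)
Your proposal is correct and follows essentially the same route as the paper's proof: item (i) by unpacking the definition of averagedness, item (ii) via the convex-combination norm identity applied to $(1-\alpha\lambda)(x-\bar{x})+\alpha\lambda(Rx-\bar{x})$ together with nonexpansiveness of $R$ and $\norm{x-Rx}=\tfrac{1}{\alpha}\norm{x-Tx}$, and item (iii) via the decomposition $Ry_{x}-y_{x}=(Ry_{x}-Rx)+(1-\alpha\lambda)(Rx-x)$. The only cosmetic difference is that you divide out the factor $\alpha$ before estimating in (iii), whereas the paper carries it along; the argument is the same.
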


\begin{proof}
	\cref{lemma:yxzx:R}: Because $T$ is $\alpha$-averaged, by \cref{definition:averaged}, there exists a nonexpansive operator $R:\mathcal{H} \to \mathcal{H}$ such that 
	\begin{align*}
	T =(1-\alpha) \Id +\alpha R \Leftrightarrow T-\Id = \alpha (R -\Id).
	\end{align*}
	Because $\alpha \neq 0$, we observe that for every $u \in \mathcal{H}$,
	\begin{align*}
	u \in \Fix T \Leftrightarrow u=Tu=(1-\alpha) u +\alpha Ru \Leftrightarrow u = Ru \Leftrightarrow u \in \Fix R,
	\end{align*}
	which leads to $\Fix T =\Fix R$.	
	Moreover, it is clear that
	\begin{align*}
	y_{x} =x + \lambda (Tx-x) = x +\alpha \lambda (Rx -x) = (1-\alpha \lambda)x+\alpha \lambda Rx.
	\end{align*}

	\cref{lemma:yxzx:norm}:  Apply \cref{lemma:yxzx:R} in the first equality and the inequality below, and employ \cite[Corollary~2.15]{BC2017}  in the third equality  below to ensure that 
	\begin{align*}
	\norm{y_{x} -\bar{x}}^{2} =& \norm{(1-\alpha \lambda)x+\alpha \lambda Rx-\bar{x}}^{2}\\
	=&\norm{ (1-\alpha \lambda) \lr{x -\bar{x}} +\alpha \lambda \lr{Rx -\bar{x} } }^{2}\\
	=&	 (1-\alpha \lambda) \norm{ x -\bar{x}}^{2} + \alpha \lambda \norm{Rx -\bar{x}}^{2} -\alpha \lambda \lr{1-\alpha \lambda} \norm{x -Rx}^{2}\\
	\leq & (1-\alpha \lambda) \norm{ x -\bar{x}}^{2} + \alpha \lambda \norm{ x -\bar{x}}^{2} -\alpha \lambda \lr{1-\alpha \lambda} \frac{1}{\alpha^{2}}\norm{x -Tx}^{2}\\
	=& \norm{ x -\bar{x}}^{2} - \lambda \lr{ \frac{1}{\alpha}-  \lambda} \norm{x -Tx}^{2}.
	\end{align*}
	Based on \cref{eq:lemma:yxzx:yzx} and the inequality proved above, we establish that 
	\begin{align*}
	\norm{z_{x} -\bar{x}}^{2}  =& \norm{y_{x}  -\bar{x} +\eta e}^{2}\\
	\leq & \norm{ y_{x}  -\bar{x}  }^{2}+\eta \norm{e} \lr{2\norm{y_{x} -\bar{x}} +\eta \norm{e}}\\
	\leq &\norm{x -\bar{x}}^{2} -\lambda  \lr{\frac{1}{\alpha} -\lambda} \norm{x -Tx}^{2} +\eta \norm{e} \lr{2\norm{y_{x} -\bar{x}} +\eta \norm{e}},
	\end{align*}
	where in the first inequality above we use the fact 
	\begin{align*}
	(\forall u \in \mathcal{H}) (\forall v \in \mathcal{H}) \quad \norm{u+v}^{2} \leq \norm{u}^{2} + \norm{v} ( 2\norm{u} +\norm{v}).
	\end{align*}

	\cref{lemma:yxzx:lambdaalpha}: Note that \cref{lemma:yxzx:norm} necessitates
	\begin{align*}
	\norm{y_{x} -\bar{x}} \leq \norm{x -\bar{x}},
	\end{align*}
	which, connected with \cref{eq:lemma:yxzx:yzx}, guarantees that 
	\begin{align*}
	\norm{z_{x} -\bar{x}} \leq \norm{y_{x} -\bar{x}} +\eta \norm{e} \leq \norm{x-\bar{x}} +\eta \norm{e}.
	\end{align*}
	In addition, applying $\triangle$-inequality and $\alpha \lambda \in \left[0,1\right]$ in the first inequality and utilizing \cref{lemma:yxzx:R} and the nonexpansiveness of $R$ in the second inequality below, we obtain that
	\begin{align*}
	\norm{Ty_{x}-y_{x}} ~\stackrel{\text{\cref{lemma:yxzx:R}}}{=}~ & \alpha \norm{ Ry_{x}-y_{x} } 
	\\
	~\stackrel{\text{\cref{lemma:yxzx:R}}}{=}~ & \alpha \norm{ Ry_{x} -Rx +Rx - (1-\alpha \lambda)x -\alpha \lambda Rx }\\
	~=~& \alpha \norm{ Ry_{x} -Rx  +(1-\alpha \lambda) \lr{Rx-x} }\\
	~\leq~ & \alpha \lr{  \norm{Ry_{x} -Rx} + (1-\alpha \lambda) \norm{Rx-x}  } 
	\\
	~\leq~ & \alpha \lr{  \norm{ y_{x} - x} + \frac{1}{\alpha}(1-\alpha \lambda) \norm{Tx-x}  } \quad 
	\\
	\stackrel{\cref{eq:lemma:yxzx:yzx}}{=} & \alpha \lambda \norm{Tx-x} + (1-\alpha \lambda) \norm{Tx-x} 
	\\
	~=~&\norm{Tx-x} .
	\end{align*}
Altogether, the proof is complete.
\end{proof}

\subsection{Resolvent of monotone operators}

\begin{definition} {\rm \cite[Definitions~20.1 and 20.20]{BC2017}} \label{definition:maximallymonotone}
	Let $ A : \mathcal{H} \to 2^{\mathcal{H}}$ be a set-valued operator. Then $A$ is  \emph{monotone} \index{monotone} if $	(\forall (x,u) \in \gra A ) $ $ (\forall  (y,v) \in \gra A) $ $ \innp{x-y, u-v} \geq 0$.  We say a monotone operator $A$ is \emph{maximally monotone} \index{monotone!maximally monotone} (or \emph{maximal monotone}) if there exists no monotone operator $ B :\mathcal{H} \to 2^{\mathcal{H}}$ such that $\gra B$ properly contains $\gra A$, i.e., for every $(x,u) \in \mathcal{H} \times \mathcal{H}$, 
	\begin{align*}
	(x,u) \in \gra A \Leftrightarrow \lr{ \forall  (y,v) \in \gra A } \innp{x-y, u-v} \geq 0.
	\end{align*}
\end{definition}

\begin{definition} {\rm \cite[Definition~23.1]{BC2017}} \label{defn:ResolventApproxi}
	Let $A: \mathcal{H} \to 2^{\mathcal{H}}$ and let $\gamma \in \mathbb{R}_{++}$. The \emph{resolvent of $A$} \index{resolvent} is 
	\begin{align*}
	\J_{A} = (\Id + A)^{-1}.
	\end{align*}
\end{definition}

The following \cref{fact:cAMaximallymonotone} demonstrates that the resolvent of a maximally monotone operator is single-valued, full domain, and firmly nonexpansive, which is fundamental for our study on generalized proximal point algorithms in this work. 

\begin{fact}  {\rm \cite[Proposition~23.10]{BC2017}} \label{fact:cAMaximallymonotone}
	Let  $A: \mathcal{H} \to 2^{\mathcal{H}}$ be such that $\dom A \neq \varnothing$, set $D:= \Range (\Id +A)$, and set $T=\J_{A } |_{ D}$. Then $A$ is maximally monotone if and only if $T$ is firmly nonexpansive and $D =\mathcal{H}$.
\end{fact}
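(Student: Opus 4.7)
}
The plan is to establish the two directions separately, starting from the direct unpacking of the definitions and reserving the hard surjectivity statement for last.

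For the ($\Leftarrow$) direction, assume $T = \J_A|_D$ is firmly nonexpansive and $D = \mathcal{H}$. First I would verify that $A$ is monotone. Pick $(x,u),(y,v) \in \gra A$ and set $p := x + u$, $q := y + v$. Since $D = \mathcal{H}$, both $Tp$ and $Tq$ are defined, and by construction of the resolvent one has $Tp = x$, $Tq = y$, and $p - Tp = u$, $q - Tq = v$. Applying firm nonexpansiveness to $p,q$ and expanding $\norm{p-q}^2 = \norm{(x-y)+(u-v)}^2$ then yields $\innp{x-y, u-v} \geq 0$. For maximality, take any $(x,u) \in \mathcal{H} \times \mathcal{H}$ satisfying $\innp{x-y,u-v} \geq 0$ for every $(y,v) \in \gra A$, set $p := x+u$, and let $y := Tp$, so $(y, p-y) \in \gra A$. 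Substituting this pair gives $\innp{x-y, u - (p-y)} \geq 0$; but $u - (p - y) = y - x$, so we obtain $-\norm{x-y}^2 \geq 0$, forcing $y = x$ and then $u = p - y \in Ax$.

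For the ($\Rightarrow$) direction, assume $A$ is maximally monotone. The easy part is that $T$ is single-valued and firmly nonexpansive on $D$: if $u_i = Tx_i$, $i=1,2$, then $x_i - u_i \in Au_i$, and monotonicity applied to these two pairs gives $\innp{u_1 - u_2, (x_1-u_1) - (x_2-u_2)} \geq 0$, which rearranges to $\innp{u_1-u_2, x_1-x_2} \geq \norm{u_1 - u_2}^2$. This inequality characterizes firm nonexpansiveness (equivalently the $\tfrac12$-averaged property in \cref{fact:firmlynonexpansiveaveraged}), and in particular taking $x_1 = x_2$ yields single-valuedness.

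The main obstacle is the surjectivity claim $D = \Range(\Id + A) = \mathcal{H}$, which is Minty's theorem. I would prove it by a fixed-point/contraction argument: fix $z \in \mathcal{H}$ and seek $x \in \mathcal{H}$ with $z \in x + Ax$, i.e., $z - x \in Ax$. The standard approach is to approximate $A$ by its Yosida regularizations $A_\gamma = \tfrac{1}{\gamma}(\Id - \J_{\gamma A})$ on a dense subset where $\J_{\gamma A}$ is already known to be defined, and then pass to the limit; alternatively one can exhibit a point in $D$ and use a connectedness/openness argument showing that $D$ is simultaneously open and closed in $\mathcal{H}$, forcing $D = \mathcal{H}$. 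In either route, maximality is used in an essential way: without it, one cannot rule out that a limit point of approximate solutions lies outside $\gra A$. Since this is the content of Minty's theorem and the result is quoted from \cite[Proposition~23.10]{BC2017}, the cleanest write-up would cite the Minty surjectivity theorem at this step and direct the reader to the standard reference.
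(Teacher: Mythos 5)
The paper states \cref{fact:cAMaximallymonotone} as a quoted result from \cite[Proposition~23.10]{BC2017} and provides no proof of its own, so there is no in-paper argument to compare against. Your proposal is correct and follows the standard route: the $(\Leftarrow)$ computation (monotonicity from expanding $\norm{(x-y)+(u-v)}^2$ against firm nonexpansiveness, maximality via $y:=T(x+u)$) and the $(\Rightarrow)$ computation (the inequality $\innp{u_1-u_2,x_1-x_2}\geq\norm{u_1-u_2}^2$ giving both single-valuedness and firm nonexpansiveness from monotonicity alone) are complete and accurate, and you correctly isolate the surjectivity claim $\Range(\Id+A)=\mathcal{H}$ as Minty's theorem, which is reasonably handled by citation rather than reproved.
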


\cref{fact:FixJcAzerA} will be used frequently in our proofs later.
\begin{fact} {\rm \cite[Proposition~23.38]{BC2017}} \label{fact:FixJcAzerA}
	Let $A: \mathcal{H} \to 2^{\mathcal{H}}$ be monotone and  let $ \gamma \in \mathbb{R}_{++}$. Then $\Fix \J_{\gamma A} = \zer A$.
\end{fact}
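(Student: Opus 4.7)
The statement asserts that the fixed points of the resolvent $\J_{\gamma A}$ coincide with the zeros of $A$, which is a purely definitional correspondence. My plan is therefore to carry out a short chain of equivalences, starting from the definition of a fixed point and unfolding the definitions of resolvent and inverse graph, with the positivity of $\gamma$ used only at the very last step.

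First, I would fix $x \in \mathcal{H}$ and write
\begin{align*}
x \in \Fix \J_{\gamma A} &\iff x \in \J_{\gamma A}(x) \\
&\iff x \in (\Id + \gamma A)^{-1}(x),
\end{align*}
using \cref{defn:ResolventApproxi}. Then I would invoke the definition of the inverse of a set-valued operator via its graph (given in the notation section) to rewrite this as $(x,x) \in \gra(\Id + \gamma A)$, i.e.\ $x \in x + \gamma A(x)$, which amounts to $0 \in \gamma A(x)$. Finally, since $\gamma \in \mathbb{R}_{++}$, dividing by $\gamma$ inside the set gives $0 \in A(x)$, i.e., $x \in \zer A$. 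Every step is an equivalence, so the double inclusion $\Fix \J_{\gamma A} = \zer A$ follows.

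There is essentially no obstacle here: monotonicity of $A$ is not needed for the equivalence itself and only enters if one wants $\J_{\gamma A}$ to be single-valued, which is not required for the statement as written (a fixed point in the set-valued sense means $x \in \J_{\gamma A}(x)$). The only place one must be careful is the step $0 \in \gamma A(x) \iff 0 \in A(x)$, which uses $\gamma > 0$ so that $\gamma A(x) = \{\gamma u : u \in A(x)\}$ contains $0$ iff $A(x)$ does. I would keep the write-up to three or four lines, presented as a displayed chain of equivalences.
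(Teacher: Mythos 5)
Your proposal is correct: the paper states this as a \cref{fact:FixJcAzerA} imported from \cite[Proposition~23.38]{BC2017} without reproving it, and your chain of equivalences $x \in \Fix \J_{\gamma A} \Leftrightarrow x \in (\Id + \gamma A)^{-1}(x) \Leftrightarrow 0 \in \gamma A(x) \Leftrightarrow 0 \in A(x)$ is exactly the definitional argument given in that reference. Your side remark is also accurate — monotonicity is only needed so that $\J_{\gamma A}$ is single-valued on its domain and $\Fix$ makes sense in the paper's single-valued convention, not for the equivalence itself.
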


\begin{lemma} \label{lemma:resolvents:yxzx}
	Let $A: \mathcal{H} \to 2^{\mathcal{H}}$ be maximally monotone with $\zer A \neq \varnothing$, let $ x$ and $e $ be in $\mathcal{H}$,  let 
	$\gamma \in \mathbb{R}_{++}$,  and  let $\lambda$ and   $\eta$ be in $ \mathbb{R}_{+}$. Define
	\begin{align*}
	y_{x} := (1-\lambda) x+\lambda \J_{\gamma A}x  \quad \text{and} \quad 
	z_{x} :=   (1-\lambda) x+\lambda \J_{\gamma A}x +\eta e = y_{x} +\eta e. 
	\end{align*}
	Then for every $ \bar{x} \in \zer A$,  
	\begin{align*}
	&\norm{y_{x} -\bar{x}}^{2} \leq \norm{x -\bar{x}}^{2} -\lambda \lr{2 -\lambda} \norm{x -\J_{\gamma A}x}^{2};\\
	&\norm{z_{x} -\bar{x}}^{2} \leq \norm{x -\bar{x}}^{2} -\lambda  \lr{2 -\lambda} \norm{x -\J_{\gamma A}x}^{2} +\eta \norm{e} \lr{2\norm{y_{x} -\bar{x}} +\eta \norm{e}}.
	\end{align*}
\end{lemma}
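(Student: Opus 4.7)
The plan is to reduce the lemma to a direct application of \cref{lemma:yxzx}\cref{lemma:yxzx:norm} by verifying that the resolvent $\J_{\gamma A}$ satisfies the hypotheses of that proposition with $\alpha = \frac{1}{2}$.

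First I would observe the structural facts about $T := \J_{\gamma A}$: since $A$ is maximally monotone, so is $\gamma A$ (this follows from \cref{definition:maximallymonotone}, as scaling by $\gamma \in \mathbb{R}_{++}$ preserves monotonicity and maximality), hence by \cref{fact:cAMaximallymonotone} the operator $T$ is firmly nonexpansive with full domain $\mathcal{H}$. By \cref{fact:firmlynonexpansiveaveraged}, $T$ is therefore $\frac{1}{2}$-averaged, so it is in particular a nonexpansive self-map of $\mathcal{H}$. Next, by \cref{fact:FixJcAzerA}, $\Fix T = \zer A$, which is nonempty by assumption, and moreover $\bar{x} \in \zer A$ is equivalent to $\bar{x} \in \Fix T$.

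Having verified the hypotheses, I would apply \cref{lemma:yxzx}\cref{lemma:yxzx:norm} with $\alpha = \tfrac{1}{2}$, $T = \J_{\gamma A}$, and the same $x$, $e$, $\lambda$, $\eta$, $\bar{x}$ as in the present statement. The definitions of $y_{x}$ and $z_{x}$ in \cref{eq:lemma:yxzx:yzx} match those of the lemma verbatim. Then $\lambda\bigl(\tfrac{1}{\alpha}-\lambda\bigr) = \lambda(2-\lambda)$, and the two displayed inequalities of \cref{lemma:yxzx}\cref{lemma:yxzx:norm} become exactly the two inequalities asserted here, which completes the proof.

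There is no real obstacle: the whole content has been packaged into \cref{lemma:yxzx}. The only thing worth emphasizing in the write-up is the identification $\alpha = \tfrac{1}{2}$ coming from firm nonexpansiveness, and the fact that no restriction on $\lambda$ (beyond $\lambda \in \mathbb{R}_{+}$) is imposed, because part \cref{lemma:yxzx:norm} of that proposition does not require $\lambda \leq \frac{1}{\alpha}$; the restriction $\lambda \in [0,\tfrac{1}{\alpha}]$ only appears in part \cref{lemma:yxzx:lambdaalpha}, which is not invoked here.
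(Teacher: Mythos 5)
Your proposal is correct and follows essentially the same route as the paper: establish that $\gamma A$ is maximally monotone, invoke \cref{fact:cAMaximallymonotone} and \cref{fact:firmlynonexpansiveaveraged} to see that $\J_{\gamma A}$ is $\frac{1}{2}$-averaged on all of $\mathcal{H}$, use \cref{fact:FixJcAzerA} to identify $\Fix \J_{\gamma A}$ with $\zer A$, and then apply \cref{lemma:yxzx}\cref{lemma:yxzx:norm} with $\alpha=\frac{1}{2}$. The only cosmetic difference is that the paper cites a specific result from the literature for the maximal monotonicity of $\gamma A$ rather than arguing it directly, and your closing observation that no upper bound on $\lambda$ is needed is accurate.
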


\begin{proof}
	Because $A$ is maximally monotone and $\gamma \in \mathbb{R}_{++}$, applying \cite[Proposition~20.22]{BC2017}   with $u=0$ and $z=0$, we know that $\gamma A$ is maximally monotone. Then, via \Cref{fact:cAMaximallymonotone,fact:firmlynonexpansiveaveraged},  we know that $ \J_{\gamma A}: \mathcal{H} \to \mathcal{H}$ is $\frac{1}{2}$-averaged. Moreover, due to \cref{fact:FixJcAzerA}, $ \Fix \J_{\gamma A} = \zer A $. Hence, the desired results follow immediately from
	\cref{lemma:yxzx}\cref{lemma:yxzx:norm}.
\end{proof}

\cref{fact:MaximallyMonotoneCC} below is fundamental to our analysis in some results below. 
\begin{fact} 	{\rm \cite[Proposition~23.39]{BC2017}} 
	\label{fact:MaximallyMonotoneCC}
	Let $A: \mathcal{H} \to 2^{\mathcal{H}}$ be maximally monotone. Then $\zer A $ is closed and convex. 
\end{fact}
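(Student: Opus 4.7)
The plan is to bootstrap \cref{fact:MaximallyMonotoneCC} from three facts already established in this section: \cref{fact:cAMaximallymonotone} (which turns maximal monotonicity into good properties of the resolvent), \cref{fact:FixJcAzerA} (which identifies the fixed point set of the resolvent with $\zer A$), and \cref{fact:quasinonexCC} (which yields closedness and convexity of fixed point sets of quasinonexpansive operators on closed convex domains). Thus the zero set $\zer A$ will inherit the closed-and-convex structure from the fixed point set of a single resolvent.

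Concretely, I would fix an arbitrary $\gamma \in \mathbb{R}_{++}$ (e.g.\ $\gamma = 1$). Since $A$ is maximally monotone, \cref{fact:cAMaximallymonotone} applies and gives that $\J_{\gamma A}$ is a single-valued firmly nonexpansive operator with $\dom \J_{\gamma A} = \mathcal{H}$. By \cref{definition:Nonexpansive}\cref{definition:Nonexpansive:firmlynonexp}, firm nonexpansiveness implies nonexpansiveness, and nonexpansiveness in turn implies quasinonexpansiveness (for every $y \in \Fix \J_{\gamma A}$ and every $x \in \mathcal{H}$, $\norm{\J_{\gamma A} x - y} = \norm{\J_{\gamma A} x - \J_{\gamma A} y} \leq \norm{x-y}$). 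Taking $D = \mathcal{H}$, which is trivially nonempty, closed, and convex, \cref{fact:quasinonexCC} yields that $\Fix \J_{\gamma A}$ is closed and convex. Finally, \cref{fact:FixJcAzerA} identifies $\Fix \J_{\gamma A} = \zer A$, so $\zer A$ is closed and convex as claimed.

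There is essentially no obstacle in this argument: it is a routine chaining of the facts already quoted in the section. The only mild point worth flagging is the degenerate case $\zer A = \varnothing$, where the conclusion holds vacuously and is consistent with $\Fix \J_{\gamma A} = \varnothing$ being (trivially) closed and convex; thus the derivation does not require any assumption that $\zer A$ be nonempty.
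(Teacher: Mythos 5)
Your argument is correct, and it is exactly the standard route: the paper states this as a quoted fact from \cite[Proposition~23.39]{BC2017} without proof, and the cited source proves it precisely by combining $\Fix \J_{A} = \zer A$ with the firm nonexpansiveness and full domain of the resolvent and the closedness/convexity of fixed point sets of quasinonexpansive maps, i.e.\ the same chain of \cref{fact:cAMaximallymonotone}, \cref{fact:FixJcAzerA}, and \cref{fact:quasinonexCC} you use. The only pedantic caveat is that \cref{fact:cAMaximallymonotone} as stated concerns $\J_{A}$ rather than $\J_{\gamma A}$, so your choice $\gamma=1$ is the clean one (for general $\gamma$ one would first note that $\gamma A$ is maximally monotone).
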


\begin{corollary} \label{corollary:JcA}
	Let  $A: \mathcal{H} \to 2^{\mathcal{H}}$ be maximally monotone, let $x \in \mathcal{H}$, and let $\gamma \in \mathbb{R}_{++}$. Then the following hold.
	\begin{enumerate}
		\item \label{corollary:JcA:gra}   $\lr{ \J_{\gamma A} x , \frac{1}{\gamma} \lr{x-\J_{\gamma A} x}}   \in \gra A$.
		\item  \label{corollary:JcA:mulambda}  $\lr{ \forall \mu \in \mathbb{R}_{++}}$  $ \J_{\gamma A}(x ) = \J_{\mu A} \left( \frac{\mu}{\gamma}x + \left( 1- \frac{\mu}{\gamma} \right)  \J_{\gamma A}x  \right)$.
	\end{enumerate} 
\end{corollary}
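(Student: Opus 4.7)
\textbf{Proof proposal for \cref{corollary:JcA}.}

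The plan is to unwind the definition of the resolvent in both parts; neither claim requires any monotonicity argument beyond the fact that $\J_{\gamma A}$ is single-valued and everywhere defined, which is guaranteed by \cref{fact:cAMaximallymonotone} applied to $\gamma A$ (which is maximally monotone since $A$ is and $\gamma>0$).

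For \cref{corollary:JcA:gra}, I would set $y:=\J_{\gamma A}x$ and use \cref{defn:ResolventApproxi} to observe that $y = (\Id+\gamma A)^{-1}x$ is equivalent to $x \in (\Id+\gamma A)y = y + \gamma A y$. Rearranging, this gives $x-y \in \gamma Ay$, i.e., $\tfrac{1}{\gamma}(x-y) \in Ay$. Translating back, $\bigl(\J_{\gamma A}x,\tfrac{1}{\gamma}(x-\J_{\gamma A}x)\bigr)\in\gra A$, which is exactly the claim.

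For \cref{corollary:JcA:mulambda}, I would again write $y := \J_{\gamma A} x$ and verify the identity by checking that $y$ satisfies the defining inclusion for $\J_{\mu A}$ applied to the point $w := \tfrac{\mu}{\gamma} x + \bigl(1-\tfrac{\mu}{\gamma}\bigr) y$. That is, I need $w \in (\Id+\mu A)y$, i.e., $w - y \in \mu A y$. A direct computation gives
\begin{equation*}
w - y = \tfrac{\mu}{\gamma}x + \bigl(1-\tfrac{\mu}{\gamma}\bigr)y - y = \tfrac{\mu}{\gamma}(x-y),
\end{equation*}
so the required inclusion becomes $\tfrac{\mu}{\gamma}(x-y) \in \mu Ay$, equivalently $\tfrac{1}{\gamma}(x-y) \in Ay$, which is precisely what \cref{corollary:JcA:gra} has just supplied. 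Since $\J_{\mu A}$ is single-valued (by \cref{fact:cAMaximallymonotone} applied to $\mu A$), this forces $y = \J_{\mu A}w$, completing the proof.

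I do not anticipate any genuine obstacle: both parts reduce to a one-line manipulation of the defining inclusion for the resolvent, and single-valuedness of $\J_{\gamma A}$ and $\J_{\mu A}$ is automatic from maximal monotonicity of $A$ together with \cref{fact:cAMaximallymonotone}. The only minor care point is to invoke the fact that a positive scalar multiple of a maximally monotone operator is maximally monotone, so that \cref{fact:cAMaximallymonotone} is available for both $\gamma A$ and $\mu A$.
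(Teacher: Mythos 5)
Your proof is correct. It differs from the paper's in presentation rather than in substance: the paper disposes of both items by citation, invoking \cite[Proposition~23.2(ii)]{BC2017} for \cref{corollary:JcA:gra} (after noting via \cref{fact:cAMaximallymonotone} that $\J_{\gamma A}$ has full domain) and \cite[Proposition~23.31(i)]{BC2017} with $\lambda=\frac{\mu}{\gamma}$ for \cref{corollary:JcA:mulambda}, whereas you reprove those two textbook facts from scratch by unwinding the defining inclusion $x\in y+\gamma Ay$ for $y=\J_{\gamma A}x$ and then verifying $w-y=\frac{\mu}{\gamma}(x-y)\in\mu Ay$ for the shifted point $w$. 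What your route buys is self-containedness --- the only external inputs are \cref{defn:ResolventApproxi} and the single-valuedness/full-domain statement of \cref{fact:cAMaximallymonotone}, and you correctly flag the one point of care (that $\gamma A$ and $\mu A$ are again maximally monotone, as the paper itself arranges elsewhere via \cite[Proposition~20.22]{BC2017}); what the paper's route buys is brevity and consistency with its general practice of outsourcing standard resolvent identities to \cite{BC2017}. Your derivation of \cref{corollary:JcA:mulambda} from \cref{corollary:JcA:gra} also makes the logical dependence between the two items explicit, which the paper's two independent citations leave implicit.
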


\begin{proof}
	\cref{corollary:JcA:gra}: According to \cref{fact:cAMaximallymonotone}, we know that 
	$\J_{\gamma A} : \mathcal{H} \to \mathcal{H}$ is full domain. Hence,
	The desired result follows immediately from \cite[Proposition~23.2(ii)]{BC2017}.
	
	\cref{corollary:JcA:mulambda}: Apply \cite[Proposition~23.31(i)]{BC2017} with $\lambda =\frac{\mu}{\gamma}$ to deduce the required result. 
\end{proof}

\begin{lemma}\label{lemma:JGammaAFix}
	Let $A: \mathcal{H} \to 2^{\mathcal{H}}$ be maximally monotone with $\zer A \neq \varnothing$ and  let $ \gamma \in \mathbb{R}_{++}$. Then
	\begin{align*}
	(\forall x \in \mathcal{H})	(\forall z \in \zer A) \quad \norm{\J_{\gamma A} x -z}^{2} +\norm{ \lr{\Id - \J_{\gamma A}}x}^{2} \leq \norm{x -z}^{2}.
	\end{align*}
\end{lemma}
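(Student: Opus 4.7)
The plan is to observe that this lemma is essentially the specialization $\lambda = 1$, $\eta = 0$ of \cref{lemma:resolvents:yxzx} (which itself is a specialization of \cref{lemma:yxzx}\cref{lemma:yxzx:norm}). So the proof should be very short.

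First, I would note that because $A$ is maximally monotone and $\gamma \in \mathbb{R}_{++}$, the operator $\gamma A$ is also maximally monotone, so by \cref{fact:cAMaximallymonotone} and \cref{fact:firmlynonexpansiveaveraged} the resolvent $\J_{\gamma A}:\mathcal{H} \to \mathcal{H}$ is single-valued with full domain and $\tfrac{1}{2}$-averaged. Moreover, \cref{fact:FixJcAzerA} gives $\Fix \J_{\gamma A} = \zer A$, so every $z \in \zer A$ is a fixed point of $\J_{\gamma A}$.

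Next, I would apply \cref{lemma:yxzx}\cref{lemma:yxzx:norm} with $T := \J_{\gamma A}$, $\alpha := \tfrac{1}{2}$, $\lambda := 1$, $\eta := 0$, and $\bar{x} := z \in \Fix \J_{\gamma A} = \zer A$. Under this choice, the auxiliary point $y_x$ from \cref{eq:lemma:yxzx:yzx} becomes $y_x = \J_{\gamma A} x$, and the coefficient $\lambda(\tfrac{1}{\alpha} - \lambda) = 1 \cdot (2 - 1) = 1$, so the inequality of \cref{lemma:yxzx}\cref{lemma:yxzx:norm} reads
\begin{align*}
\norm{\J_{\gamma A} x - z}^{2} \leq \norm{x - z}^{2} - \norm{x - \J_{\gamma A} x}^{2}.
\end{align*}
Rearranging yields exactly the claim. (Equivalently, one may simply invoke the defining firm nonexpansiveness inequality from \cref{definition:Nonexpansive}\cref{definition:Nonexpansive:firmlynonexp} for $\J_{\gamma A}$ at the pair $(x, z)$, using $\J_{\gamma A} z = z$ to kill one of the two terms on the left-hand side.)

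There is no real obstacle here: the only point to be careful about is checking that all the hypotheses needed to invoke \cref{lemma:yxzx}\cref{lemma:yxzx:norm} (namely nonemptiness of $\Fix T$, the $\alpha$-averagedness of $T$, and the admissibility of the chosen $\lambda$) are satisfied in this specialization, all of which follow from the cited facts about resolvents of maximally monotone operators.
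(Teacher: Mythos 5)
Your proposal is correct. Your primary route—specializing \cref{lemma:yxzx}\cref{lemma:yxzx:norm} with $T=\J_{\gamma A}$, $\alpha=\tfrac{1}{2}$, $\lambda=1$, $\eta=0$—does yield the claimed inequality, since the coefficient $\lambda(\tfrac{1}{\alpha}-\lambda)$ evaluates to $1$; the hypotheses are all verified exactly as you say (maximal monotonicity of $\gamma A$, then \cref{fact:cAMaximallymonotone}, \cref{fact:firmlynonexpansiveaveraged}, and \cref{fact:FixJcAzerA}). The paper itself takes the shorter path that you relegate to a parenthetical: it notes that $\J_{\gamma A}$ is firmly nonexpansive with full domain and that $\Fix\J_{\gamma A}=\zer A$, and then reads the inequality off the defining property in \cref{definition:Nonexpansive}\cref{definition:Nonexpansive:firmlynonexp} at the pair $(x,z)$, using $\J_{\gamma A}z=z$. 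The two arguments are mathematically equivalent here (with $\lambda=1$ and $\alpha=\tfrac12$ the inequality of \cref{lemma:yxzx}\cref{lemma:yxzx:norm} \emph{is} the firm-nonexpansiveness inequality at a fixed point), so the only difference is that the paper avoids routing through the averagedness machinery; your detour costs nothing but is slightly less direct.
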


\begin{proof}
	As a consequence of \cref{fact:cAMaximallymonotone}, $\J_{\gamma A}$ is firmly nonexpansive and full domain. Moreover, due to \cref{fact:FixJcAzerA}, $ \Fix \J_{\gamma A} = \zer A$. Therefore, 
	the desired inequality follows easily from   \cref{definition:Nonexpansive}\cref{definition:Nonexpansive:firmlynonexp}.
\end{proof}

The following result complements \cite[Proposition~2.16]{OuyangWeakStrongGPPA2021} which also provides sufficient conditions for the inclusion given in \cref{lemma:ykweakcluster}. 
Notice that the assumption $\inf_{k \in \mathbb{N}} c_{k} >0$ is critical in \cite[Proposition~2.16]{OuyangWeakStrongGPPA2021}.
\begin{lemma} \label{lemma:ykweakcluster}
	Let $A :\mathcal{H} \to 2^{\mathcal{H}}$ be maximally monotone with $\zer A \neq \varnothing$.
	Let $(y_{k})_{k \in \mathbb{N}}$ be in $\mathcal{H}$ and let $(c_{k})_{k \in \mathbb{N}}$ be in $\mathbb{R}_{++}$. Denote the set of all weak sequential clusters of the sequence $(y_{k})_{k \in \mathbb{N}}$ by $\Omega \lr{ \lr{y_{k}}_{k \in \mathbb{N}} }$.
	Suppose that $\frac{1}{c_{k}} \lr{y_{k} -\J_{c_{k} A}y_{k}} \to 0 $ and that one of the following assumptions hold. 
	\begin{itemize}
		\item[{\rm (A1)}]   \label{lemma:ykweakcluster:supC} $\bar{c} := \sup_{k \in \mathbb{N}} c_{k} <\infty$.
		\item[{\rm (A2)}]  \label{lemma:ykweakcluster:yk+1} $(\forall k \in \mathbb{N})$ $y_{k+1} = \J_{c_{k} A}y_{k}$.
	\end{itemize}
	Then $\Omega \lr{ \lr{y_{k}}_{k \in \mathbb{N}} } \subseteq \zer A$.
\end{lemma}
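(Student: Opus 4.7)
The plan is to take any $y \in \Omega((y_{k})_{k \in \mathbb{N}})$ together with a subsequence $y_{k_{n}} \weakly y$, and to show $y \in \zer A$ by exploiting the weak-strong sequential closedness of $\gra A$, which holds for any maximally monotone operator. The key is to introduce $u_{k} := \J_{c_{k} A} y_{k}$ and $v_{k} := \frac{1}{c_{k}}(y_{k} - \J_{c_{k} A} y_{k})$; \cref{corollary:JcA}\cref{corollary:JcA:gra} gives $(u_{k}, v_{k}) \in \gra A$, and by hypothesis $v_{k} \to 0$ strongly. The central step is to upgrade $y_{k_{n}} \weakly y$ to $u_{k_{n}} \weakly y$ so that closedness of the graph can be invoked at the cluster pair $(y,0)$.

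Under (A1) this upgrade is immediate, since
\begin{align*}
\norm{u_{k} - y_{k}} = c_{k} \norm{v_{k}} \leq \bar{c}\, \norm{v_{k}} \to 0,
\end{align*}
forcing $u_{k_{n}} \weakly y$. Under (A2), $c_{k}$ may be unbounded and that brute bound is useless; however, $u_{k} = y_{k+1}$ and I would deduce $y_{k+1} - y_{k} \to 0$ strongly by a Fej\'er-type square-summability argument. Concretely, fixing any $\bar{z} \in \zer A$ and applying \cref{lemma:JGammaAFix} with $x=y_{k}$, $\gamma=c_{k}$, $z=\bar{z}$, I obtain
\begin{align*}
\norm{y_{k+1} - \bar{z}}^{2} + \norm{y_{k+1} - y_{k}}^{2} \leq \norm{y_{k} - \bar{z}}^{2};
\end{align*}
telescoping over $k$ yields $\sum_{k \in \mathbb{N}} \norm{y_{k+1} - y_{k}}^{2} \leq \norm{y_{0} - \bar{z}}^{2} < \infty$, so $y_{k+1} - y_{k} \to 0$ and hence $u_{k_{n}} = y_{k_{n}+1} \weakly y$.

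With $u_{k_{n}} \weakly y$ and $v_{k_{n}} \to 0$ in hand, I would finish by a direct limit argument based on monotonicity: for every $(p,q) \in \gra A$, $\innp{u_{k_{n}} - p, v_{k_{n}} - q} \geq 0$, and since $(u_{k_{n}})$ is bounded and $v_{k_{n}} \to 0$ strongly, passing to the limit gives $\innp{y - p, 0 - q} \geq 0$. By the characterization of maximal monotonicity in \cref{definition:maximallymonotone}, this forces $(y, 0) \in \gra A$, so $y \in \zer A$. The main obstacle is case (A2): without a bound on $c_{k}$, there is no direct comparison between $u_{k}$ and $y_{k}$, and the Fej\'er-type inequality of \cref{lemma:JGammaAFix} is precisely what substitutes for that missing estimate.
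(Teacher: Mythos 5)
Your proposal is correct. For case (A1) it coincides with the paper's proof: bound $\norm{y_{k}-\J_{c_{k}A}y_{k}} \leq \bar{c}\,\norm{v_{k}} \to 0$, transfer the weak limit to the resolvent points, and pass to the limit in the monotonicity inequality to invoke maximality. For case (A2), however, you take a genuinely different route. The paper simply re-indexes: since $y_{k_{i}} = \J_{c_{k_{i}-1}A}y_{k_{i}-1}$, the weakly convergent subsequence \emph{already consists of} resolvent points, so the graph pairs $\bigl(\J_{c_{k_{i}-1}A}y_{k_{i}-1}, \tfrac{1}{c_{k_{i}-1}}(y_{k_{i}-1}-\J_{c_{k_{i}-1}A}y_{k_{i}-1})\bigr)$ can be fed directly into the monotonicity limit argument with no extra estimate. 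You instead establish asymptotic regularity $y_{k+1}-y_{k}\to 0$ via the firm-nonexpansiveness inequality of \cref{lemma:JGammaAFix} and a telescoping Fej\'er argument (which is where the hypothesis $\zer A \neq \varnothing$ enters your proof), and then transfer the weak limit from $y_{k_{n}}$ to $u_{k_{n}}=y_{k_{n}+1}$. Both arguments are valid; the paper's is more economical (one line, no use of a reference point in $\zer A$), while yours yields the stronger by-product $\sum_{k\in\mathbb{N}}\norm{y_{k+1}-y_{k}}^{2}<\infty$, which is not needed here but is the standard workhorse in convergence proofs for the exact proximal point algorithm. The concluding limit passage $\innp{u_{k_{n}}-p, v_{k_{n}}-q}\geq 0 \Rightarrow \innp{y-p,0-q}\geq 0$ is exactly the paper's, using boundedness of the weakly convergent $(u_{k_{n}})$ against $v_{k_{n}}\to 0$.
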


\begin{proof}
	If $\Omega \lr{ \lr{y_{k}}_{k \in \mathbb{N}} }=\varnothing$, then the required result is trivial. Suppose that $\Omega \lr{ \lr{y_{k}}_{k \in \mathbb{N}} } \neq \varnothing$.
	Let $\bar{y} \in \Omega \lr{ \lr{y_{k}}_{k \in \mathbb{N}} }$. Then there exists a subsequence $ \lr{y_{k_{i}}}_{i \in \mathbb{N}}$ of $ \lr{y_{k}}_{k \in \mathbb{N}}$ such that 
	\begin{align} \label{eq:lemma:ykweakcluster}
	y_{k_{i}} \weakly \bar{y}.
	\end{align}
	
	\emph{Case~1}: Suppose that the assumption (A1) holds. 	As a consequence of  $\frac{1}{c_{k}} \lr{y_{k} -\J_{c_{k} A}y_{k}} \to 0 $, we know that 
	\begin{align*}
	\norm{y_{k} - \J_{c_{k} A}y_{k}} = c_{k} \norm{\frac{1}{c_{k}} \lr{y_{k} -\J_{c_{k} A}y_{k}}  } \leq \bar{c} \norm{\frac{1}{c_{k}} \lr{y_{k} -\J_{c_{k} A}y_{k}}  } \to 0.
	\end{align*}
	Combine this with  \cref{eq:lemma:ykweakcluster} to establish that 
	\begin{align} \label{eq:lemma:ykweakcluster:case1}
	\J_{c_{k_{i}} A}y_{k_{i}} =\lr{ \J_{c_{k_{i}} A}y_{k_{i}} - y_{k_{i}} } +y_{k_{i}} \weakly \bar{y}
	\end{align}
	Because $A$ is monotone and, via \cref{corollary:JcA}\cref{corollary:JcA:gra}, 
	\begin{align*}
	(\forall i \in \mathbb{N})\quad \lr{ \J_{c_{k_{i}} A}y_{k_{i}} , \frac{1}{c_{k_{i}}} \lr{ y_{k_{i}}-  \J_{c_{k_{i}} A}y_{k_{i}} }} \in \gra A,
	\end{align*}
	we observe that 
	\begin{align*}
	\lr{\forall \lr{x,u} \in \gra A} \quad 	\innp{ \J_{c_{k_{i}} A}y_{k_{i}} -x, \frac{1}{c_{k_{i}}} \lr{ y_{k_{i}}-  \J_{c_{k_{i}} A}y_{k_{i}} } -u } \geq 0,
	\end{align*} 
	which, connected with \cref{eq:lemma:ykweakcluster:case1} and the assumption   $\frac{1}{c_{k}} \lr{y_{k} -\J_{c_{k} A}y_{k}} \to 0 $, ensures that 
	\begin{align*}
	\lr{\forall (x,u) \in \gra A} \quad \innp{\bar{y}-x, 0-u} \geq 0.
	\end{align*}
	This, due to \cref{definition:maximallymonotone}, guarantees that $\lr{\bar{y}, 0} \in \gra A$, that is, $\bar{y} \in \zer A$. 
	
	\emph{Case~2}:  Suppose that (A2) holds. Then 
	\begin{align*}
	\J_{c_{k_{i} -1} A}y_{k_{i} -1} = y_{k_{i}}  \weakly \bar{y}.
	\end{align*}
	\cref{corollary:JcA}\cref{corollary:JcA:gra} leads to $(\forall i \in \mathbb{N})$ $\lr{  \J_{c_{k_{i} -1} A}y_{k_{i} -1}, \frac{1}{c_{k_{i} -1}} \lr{ y_{k_{i} -1} -   \J_{c_{k_{i} -1} A}y_{k_{i} -1}} } \in \gra A$. Combine this with the monotonicity of $A$ to deduce that
	\begin{align*}
	\lr{\forall \lr{x,u} \in \gra A} \quad 	\innp{  \J_{c_{k_{i} -1} A}y_{k_{i} -1} -x, \frac{1}{c_{k_{i} -1}} \lr{ y_{k_{i} -1}  -   \J_{c_{k_{i} -1} A}y_{k_{i} -1} } -u } \geq 0,
	\end{align*} 
	which, connected with 
	\begin{align*}
	\J_{c_{k_{i} -1} A}y_{k_{i} -1}  \weakly \bar{y} \quad \text{and} \quad \frac{1}{c_{k}} \lr{y_{k} -\J_{c_{k} A}y_{k}} \to 0,
	\end{align*}
	necessitates that 
	\begin{align*}
	\lr{\forall (x,u) \in \gra A} \quad \innp{\bar{y}-x, 0-u} \geq 0.
	\end{align*}
	As we pointed out in the proof of Case~1 above, this entails  $\bar{y} \in \zer A$. 
	
	Altogether, the proof is complete. 
\end{proof}

\subsection{Metrical subregularity}
\begin{definition} \label{definition:metricsubregularity:subregular}  {\rm \cite[Pages~183 and 184]{DontchevRockafellar2014}} 
	Let $F: \mathcal{H} \to 2^{\mathcal{H}}$ be a set-valued operator. $F$ is called 
	 \emph{metrically subregular at $\bar{x}$ for $\bar{y}$} if $\lr{ \bar{x}, \bar{y} } \in \gra F$ and there exists $\kappa \in \mathbb{R}_{+}$ along with a neighborhood $U$ of $\bar{x}$ such that  
	\begin{align*}
	(\forall x \in U) \quad	\dist \lr{ x, F^{-1} (\bar{y})} \leq \kappa \dist \lr{\bar{y}, F(x)}.
	\end{align*}
	The constant $\kappa$ is called \emph{constant of metric subregularity}.
	The infimum of all $\kappa$ for which the inequality above holds is the \emph{modulus of metric subregularity}, denoted by $\text{subreg}\lr{F;\bar{x}|\bar{y}}$. The absence of metric subregularity is signaled by 
	$\text{subreg}\lr{F;\bar{x}|\bar{y}}  =\infty$.
\end{definition}

\cref{proposition:TFMetricSubreg}\cref{proposition:TFMetricSubreg:MSubR}  is inspired by  \cite[Lemma~3.8]{BauschkeNollPhan2015}. In fact, \cref{proposition:TFMetricSubreg}\cref{proposition:TFMetricSubreg:MSubR} improves \cite[Lemma~3.8]{BauschkeNollPhan2015} from the following three aspects: it replaces the $\alpha$-averaged operator $T$ in  \cite[Lemma~3.8]{BauschkeNollPhan2015} by the affine combination $F_{\lambda} =\lr{1-\lambda}\Id +\lambda T$ below;
it generalizes \cite[Lemma~3.8]{BauschkeNollPhan2015} from boundedly linearly regularity (see \cite[Definition~2.1]{BauschkeNollPhan2015} for a detailed definition) to metrical  subregularity; and    the upper bound  in \cref{proposition:TFMetricSubreg}\cref{proposition:TFMetricSubreg:B}    is better than the corresponding one in the inequality $(21)$ of \cite[Lemma~3.8]{BauschkeNollPhan2015}.

 \cref{proposition:TFMetricSubreg}  will play a critical role to prove 
\cref{theorem:KMI} below.  
\begin{proposition} \label{proposition:TFMetricSubreg}
	Let $T: \mathcal{H} \to \mathcal{H}$ be $\alpha$-averaged with $\alpha \in \left]0,1\right]$ and $\Fix T \neq \varnothing$. 	Let $\lambda \in \left]0,\frac{1}{\alpha}\right[\,$.  Define 
	\begin{align*}
	F_{\lambda} :=\lr{1-\lambda}\Id +\lambda T.
	\end{align*}
	Then the following statements hold.
	\begin{enumerate}
		\item \label{proposition:TFMetricSubreg:H} For every $x \in \mathcal{H}$ and every $z \in \Fix T$, we have that
		\begin{align*}
		&\norm{F_{\lambda}x-z}^{2} + \frac{\lambda \lr{1-\lambda \alpha}}{ \alpha} \norm{\lr{\Id - T}x}^{2} \leq \norm{x-z}^{2};\\
		&\frac{\lambda \lr{1-\lambda \alpha}}{ \alpha} \norm{\lr{\Id - T}x}^{2} \leq \dist^{2} \lr{x, \Fix T} - \dist^{2} \lr{F_{\lambda}x, \Fix T}.
		\end{align*} 
		Consequently, $(\forall x \in \mathcal{H})$ $  \dist  \lr{F_{\lambda}x, \Fix T} \leq \dist  \lr{x, \Fix T} $.
		\item \label{proposition:TFMetricSubreg:MSubR} Suppose that $\Id -T$ is metrically subregular at $\bar{x} \in \Fix T$ for $0 \in \lr{\Id -T}\bar{x}$, i.e., 
		\begin{align} \label{eq:proposition:TFMetricSubreg:MetricSub} 
		(\exists \kappa >0) (\exists \delta >0) (\forall x \in B[\bar{x}; \delta]) \quad \dist \lr{x, \Fix T} \leq \kappa \norm{x - Tx}.
		\end{align}
		Define $\rho:=\lr{  1- \frac{\lambda \lr{1-\lambda \alpha}}{ \alpha\kappa^{2}} }^{\frac{1}{2}}$. Then the following hold. 
		\begin{enumerate}
			\item  \label{proposition:TFMetricSubreg:rho} $\rho \in \left[ 0,	\frac{1}{ \lr{ 1+ \frac{\lambda \lr{1-\lambda \alpha}}{ \alpha\kappa^{2}} }^{\frac{1}{2}}   } \right[ \subseteq  \left[ 0,1\right[\,$.
			
			\item \label{proposition:TFMetricSubreg:B} $(\forall x \in B[\bar{x}; \delta])$  $  \dist  \lr{F_{\lambda}x, \Fix T} \leq \rho \dist  \lr{x, \Fix T} $.
		\end{enumerate}
		
	\end{enumerate}
\end{proposition}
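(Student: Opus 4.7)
My plan is to reduce both parts of the proposition to the averaged-operator estimates already established in Proposition~\ref{lemma:yxzx}, together with the closed-convexity of $\Fix T$ from Fact~\ref{fact:quasinonexCC}.

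For part~\cref{proposition:TFMetricSubreg:H}, observe that $F_{\lambda}x$ is exactly the point $y_{x}$ from Proposition~\ref{lemma:yxzx} (with $\eta=0$), so invoking Proposition~\ref{lemma:yxzx}\cref{lemma:yxzx:norm} yields the first inequality verbatim after rewriting $\lambda\bigl(\tfrac{1}{\alpha}-\lambda\bigr)=\tfrac{\lambda(1-\lambda\alpha)}{\alpha}$. Since $T$ is nonexpansive hence quasinonexpansive, Fact~\ref{fact:quasinonexCC} tells me $\Fix T$ is closed and convex, so I may take $z=\Pro_{\Fix T}x$. Then $\norm{x-z}=\dist(x,\Fix T)$ and $\dist(F_{\lambda}x,\Fix T)\le\norm{F_{\lambda}x-z}$. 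Substituting these into the first inequality gives the desired descent of squared distances, from which the consequence $\dist(F_{\lambda}x,\Fix T)\le\dist(x,\Fix T)$ is immediate.

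For part~\cref{proposition:TFMetricSubreg:MSubR}\cref{proposition:TFMetricSubreg:B}, the idea is to chain the metric subregularity bound with part~\cref{proposition:TFMetricSubreg:H}. If $x\in\Fix T$ the claim is trivial. Otherwise $\norm{x-Tx}>0$ and metric subregularity gives $\dist^{2}(x,\Fix T)\le\kappa^{2}\norm{x-Tx}^{2}$, so $\norm{x-Tx}^{2}\ge\tfrac{1}{\kappa^{2}}\dist^{2}(x,\Fix T)$. Plugging this into the second inequality of \cref{proposition:TFMetricSubreg:H} gives
\begin{equation*}
\tfrac{\lambda(1-\lambda\alpha)}{\alpha\kappa^{2}}\,\dist^{2}(x,\Fix T)\le\dist^{2}(x,\Fix T)-\dist^{2}(F_{\lambda}x,\Fix T),
\end{equation*}
which rearranges to $\dist^{2}(F_{\lambda}x,\Fix T)\le\rho^{2}\dist^{2}(x,\Fix T)$, and taking square roots finishes the part (b) claim.

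The one step that requires a little care is part~\cref{proposition:TFMetricSubreg:MSubR}\cref{proposition:TFMetricSubreg:rho}, where I must check that $\rho$ is well defined and lies in the stated interval. Writing $c:=\tfrac{\lambda(1-\lambda\alpha)}{\alpha\kappa^{2}}$, the assumption $\lambda\in\left]0,\tfrac{1}{\alpha}\right[$ immediately gives $c>0$. For $c\le 1$ (which secures $\rho\ge 0$), I will combine both ingredients again: for any $x\in B[\bar{x};\delta]$ with $x\notin\Fix T$, the first inequality of \cref{proposition:TFMetricSubreg:H} applied with $z=\Pro_{\Fix T}x$ and the non-negativity of $\norm{F_{\lambda}x-z}^{2}$ forces $\tfrac{\lambda(1-\lambda\alpha)}{\alpha}\norm{x-Tx}^{2}\le\dist^{2}(x,\Fix T)$, which combined with the metric subregularity bound yields $\kappa^{2}\ge\tfrac{\lambda(1-\lambda\alpha)}{\alpha}$, i.e.\ $c\le 1$. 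The strict upper bound $\rho<\bigl(1+c\bigr)^{-1/2}$ then reduces to the elementary fact $(1-c)(1+c)=1-c^{2}<1$ for $c>0$. This is the only substantive wrinkle in the proof; the rest is bookkeeping around Proposition~\ref{lemma:yxzx}.
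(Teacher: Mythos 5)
Your proof is correct and follows essentially the same route as the paper's: the only cosmetic difference is that you derive the descent inequality in part \cref{proposition:TFMetricSubreg:H} from \cref{lemma:yxzx}\cref{lemma:yxzx:norm} (with $\eta=0$) instead of from \cref{fact:Averagedlambdaalpha} combined with \cite[Proposition~4.35]{BC2017}, but these encode the same averaged-operator identity. The remaining steps --- projecting onto the closed convex set $\Fix T$, chaining the subregularity bound with part \cref{proposition:TFMetricSubreg:H}, and the elementary estimate $(1-c)(1+c)<1$ for the bound on $\rho$ --- coincide with the paper's argument, including the shared implicit assumption (when verifying $c\leq 1$) that $B[\bar{x};\delta]$ is not entirely contained in $\Fix T$.
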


\begin{proof}
	\cref{proposition:TFMetricSubreg:H}:	Because $T: \mathcal{H} \to \mathcal{H}$ is $\alpha$-averaged, due to \cref{fact:Averagedlambdaalpha}, we know that $F_{\lambda}$ is $\lambda\alpha$-averaged with $\lambda \alpha \in \left]0,1\right[\,$.  As a consequence of  $\lambda \neq 0$ and \cref{fact:quasinonexCC}, we have that $\Fix F_{\lambda} =\Fix T $ is nonempty closed and convex. In view  of the definition of $F_{\lambda}$, we get that 
	\begin{align} \label{eq:proposition:TFMetricSubreg:IdF}
	\Id - F_{\lambda} = \lambda \lr{\Id - T}.
	\end{align}
	Let  $x \in \mathcal{H}$.  Based on \cite[Proposition~4.35]{BC2017} and $\Fix F_{\lambda} =\Fix T $, we observe that for every $z \in \Fix T$,
	\begin{align*}
	&	\norm{F_{\lambda}x-z}^{2} + \frac{1-\lambda \alpha}{\lambda \alpha} \norm{\lr{\Id - F_{\lambda}}x}^{2} \leq \norm{x-z}^{2}\\
	\stackrel{\cref{eq:proposition:TFMetricSubreg:IdF}}{\Leftrightarrow} & \norm{F_{\lambda}x-z}^{2} + \frac{\lambda \lr{1-\lambda \alpha}}{ \alpha} \norm{\lr{\Id - T}x}^{2} \leq \norm{x-z}^{2},
	\end{align*}
	which, noticing that $\Pro_{\Fix T}x \in \Fix T$,  implies that, 
	\begin{align*}
	&	\frac{\lambda \lr{1-\lambda \alpha}}{ \alpha} \norm{\lr{\Id - T}x}^{2} \leq \norm{x-\Pro_{\Fix T}x}^{2} - \norm{F_{\lambda}x-\Pro_{\Fix T}x}^{2}\\
	\Rightarrow & \frac{\lambda \lr{1-\lambda \alpha}}{ \alpha} \norm{\lr{\Id - T}x}^{2} \leq \norm{x-\Pro_{\Fix T}x}^{2} - \norm{F_{\lambda}x-\Pro_{\Fix T}\lr{F_{\lambda}x}}^{2}\\
	\Leftrightarrow & \frac{\lambda \lr{1-\lambda \alpha}}{ \alpha} \norm{\lr{\Id - T}x}^{2} \leq \dist^{2} \lr{x, \Fix T} - \dist^{2} \lr{F_{\lambda}x, \Fix T}.
	\end{align*}
	
	\cref{proposition:TFMetricSubreg:MSubR}:  Let $x \in B[\bar{x}; \delta]$. Set $\eta:= \frac{\lambda \lr{1-\lambda \alpha}}{ \alpha} \in \mathbb{R}_{++}$.  According to \cref{eq:proposition:TFMetricSubreg:MetricSub}, we have that
	\begin{align*}
	 \dist^{2} \lr{x, \Fix T} \leq  \kappa^{2} \norm{x - Tx}^{2} \stackrel{\text{\cref{proposition:TFMetricSubreg:H}}}{\leq}  \frac{\kappa^{2}}{\eta } \lr{ \dist^{2} \lr{x, \Fix T} - \dist^{2} \lr{F_{\lambda}x, \Fix T} }.
	\end{align*} 
This necessitates  that 
		\begin{align} \label{eq:proposition:TFMetricSubreg:B:-}
	 \dist^{2} \lr{ F_{\lambda}x, \Fix T } \leq \lr{1 -\frac{\eta }{\kappa^{2}} } \dist^{2} \lr{x, \Fix T},  
		\end{align}
which forces that 
	$1 -\frac{\eta}{\kappa^{2}}  \in \mathbb{R}_{+}$. Notice that $	1 -\frac{\eta}{\kappa^{2}} < \frac{1}{1+ \frac{\eta}{\kappa^{2}} } \Leftrightarrow 1 -\frac{\eta^{2}}{\kappa^{4}} < 1$.
	 So, we establish that $0 \leq  1- \frac{\lambda \lr{1-\lambda \alpha}}{ \alpha\kappa^{2}} <  \frac{1}{ 1+ \frac{\lambda \lr{1-\lambda \alpha}}{ \alpha\kappa^{2}} } <1$, that leads to \cref{proposition:TFMetricSubreg:rho}. 
	
	In addition, \cref{eq:proposition:TFMetricSubreg:B:-} clearly ensures the required inequality in \cref{proposition:TFMetricSubreg:B}.	
\end{proof}

The following \cref{lemma:metricallysubregularEQ}\cref{lemma:metricallysubregularEQ:EQ} can also be obtained by substituting $\mathcal{B} =0$ in \cite[Lemma~3.3]{ShenPan2016}. The idea of the following proof is almost the same as that of \cite[Lemma~3.3]{ShenPan2016}. For completeness and convenience of later references, we attach detailed results and proofs below. 

\begin{lemma} \label{lemma:metricallysubregularEQ}
	Let  $A: \mathcal{H} \to 2^{\mathcal{H}}$ be   maximally monotone with $\zer A \neq \varnothing$, let $\bar{x} \in \zer A$, and let $\gamma \in \mathbb{R}_{++}$.    Then the following hold. 
	\begin{enumerate}
		\item  \label{lemma:metricallysubregularEQ:A} Suppose that $A$ is metrically subregular at $\bar{x}$ for $0 \in A\bar{x}$, i.e., 
		\begin{align} \label{eq:eqlemma:metricallysubregularEQ:A}
		(\exists \kappa >0) (\exists \delta >0) (\forall x \in B[\bar{x}; \delta]) \quad \dist \lr{x, A^{-1}0} \leq \kappa \dist \lr{0, Ax}.
		\end{align}
		Then $  \Id -\J_{\gamma A}  $ is metrically subregular at $\bar{x}$ for $0= \lr{\Id -\J_{\gamma A} } \bar{x}$; more precisely, 
		\begin{align*}
	(\forall x \in B[\bar{x}; \delta]) \quad 	\dist \lr{x, \lr{\Id -\J_{\gamma A}}^{-1}0} \leq \lr{1 + \frac{\kappa}{\gamma}} \dist \lr{0,  \lr{\Id -\J_{\gamma A}} x}.
		\end{align*}
		
		\item  \label{lemma:metricallysubregularEQ:IdJA} Suppose that $  \Id -\J_{\gamma A} $ is metrically subregular at $\bar{x}$ for $0 = \lr{\Id -\J_{\gamma A}}\bar{x}$, i.e.,  
		\begin{align} \label{eq:lemma:metricallysubregularEQ:ID'}
	(\exists \kappa' >0) (\exists \delta' >0) (\forall x \in B[\bar{x}; \delta'])  \quad 	\dist \lr{x, \lr{\Id -\J_{\gamma A}}^{-1}0} \leq  \kappa' \dist \lr{0,  \lr{\Id -\J_{\gamma A}}x}.
		\end{align}
		Then $A$ is metrically subregular at $\bar{x}$ for $0 \in A\bar{x}$; more precisely, 
		\begin{align*}
		(\forall x \in B[\bar{x}; \delta']) \quad \dist \lr{x, A^{-1}0} \leq \kappa'  \gamma \dist \lr{0, Ax}.
		\end{align*}
		
		\item  \label{lemma:metricallysubregularEQ:EQ} $A$ is metrically subregular at $\bar{x}$ for $0 \in A\bar{x}$ if and only if $  \Id -\J_{\gamma A}  $ is metrically subregular at $\bar{x}$ for $0 = \lr{\Id -\J_{\gamma A}} \bar{x}$.
\end{enumerate}

\end{lemma}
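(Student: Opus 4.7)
The plan is to exploit two well-known consequences of \cref{fact:cAMaximallymonotone,fact:FixJcAzerA}: the resolvent $\J_{\gamma A}$ is everywhere-defined, single-valued, and nonexpansive, and the level set $\lr{\Id -\J_{\gamma A}}^{-1}(0)$ coincides with $\Fix \J_{\gamma A} = \zer A = A^{-1}(0)$; in particular $\lr{\Id -\J_{\gamma A}}\bar{x} = 0$. Because $\Id -\J_{\gamma A}$ is single-valued, both sides of the two metric-subregularity inequalities collapse: $\dist\lr{0,\lr{\Id -\J_{\gamma A}}x} = \norm{x -\J_{\gamma A}x}$. The further ingredient I will use is \cref{corollary:JcA}\cref{corollary:JcA:gra}, which supplies the inclusion $\tfrac{1}{\gamma}\lr{x -\J_{\gamma A}x} \in A\lr{\J_{\gamma A}x}$ for every $x \in \mathcal{H}$.

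For \cref{lemma:metricallysubregularEQ:A}, I would fix $x \in B[\bar{x};\delta]$ and first observe that $\J_{\gamma A}x$ again lies in $B[\bar{x};\delta]$, since $\bar{x} \in \Fix \J_{\gamma A}$ and $\J_{\gamma A}$ is nonexpansive. Hence the hypothesis \cref{eq:eqlemma:metricallysubregularEQ:A} applies at $\J_{\gamma A}x$. A single triangle inequality $\dist\lr{x, A^{-1}0} \leq \norm{x -\J_{\gamma A}x} + \dist\lr{\J_{\gamma A}x, A^{-1}0}$, combined with the above inclusion, upper-bounds the second term by $\tfrac{\kappa}{\gamma}\norm{x -\J_{\gamma A}x}$, which yields the claimed constant $1 + \tfrac{\kappa}{\gamma}$.

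For \cref{lemma:metricallysubregularEQ:IdJA}, I would fix $x \in B[\bar{x};\delta']$; the statement is vacuous when $Ax = \varnothing$, so I pick $u \in Ax$. The key identity is $x = \J_{\gamma A}\lr{x+\gamma u}$, immediate from $\J_{\gamma A} = \lr{\Id+\gamma A}^{-1}$ together with $u \in Ax$. Nonexpansiveness of $\J_{\gamma A}$ then upgrades this to the pointwise estimate $\norm{x -\J_{\gamma A}x} \leq \gamma\norm{u}$. Substituting into \cref{eq:lemma:metricallysubregularEQ:ID'} and taking the infimum over $u \in Ax$ delivers the bound with constant $\kappa'\gamma$. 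Item \cref{lemma:metricallysubregularEQ:EQ} is then the immediate conjunction of the preceding two items. I do not foresee a real obstacle: the proof is essentially bookkeeping around the resolvent identity, with the only mildly delicate step being the verification that $\J_{\gamma A}$ preserves the neighborhood $B[\bar{x};\delta]$ in item~\cref{lemma:metricallysubregularEQ:A}, which nonexpansiveness handles for free.
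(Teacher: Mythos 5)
Your proposal is correct and follows essentially the same route as the paper's proof: the identification $\lr{\Id -\J_{\gamma A}}^{-1}(0)=\zer A$, ball preservation plus a triangle inequality and the graph inclusion for item~(i), and the estimate $\norm{x-\J_{\gamma A}x}\leq \gamma\dist\lr{0,Ax}$ for item~(ii). The only cosmetic difference is that you derive that last estimate explicitly from the identity $x=\J_{\gamma A}\lr{x+\gamma u}$ and nonexpansiveness, where the paper simply cites its resolvent corollary.
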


\begin{proof}
	As a consequence of \cref{fact:FixJcAzerA}, $\zer \lr{ \Id -\J_{\gamma A}} = \Fix \J_{\gamma A} = \zer A$, which necessitates that 
	\begin{align} \label{eq:lemma:metricallysubregularEQ}
	(\forall x \in \mathcal{H}) \quad \dist \lr{x,\lr{\Id -\J_{\gamma A}}^{-1}0 } = \dist \lr{x, A^{-1}0}.
	\end{align}
	
	\cref{lemma:metricallysubregularEQ:A}: Let $x \in B[\bar{x}; \delta]$.  Employing $\bar{x} \in  \zer A$ and  \cref{lemma:JGammaAFix}, we get 
 that    $\J_{\gamma A}x \in B[\bar{x}; \delta]$. Then applying \cref{eq:eqlemma:metricallysubregularEQ:A} with $x$ replaced by $\J_{\gamma A}x $  in the following first inequality and employing \cref{corollary:JcA}\cref{corollary:JcA:gra}   in the second one, we establish that
	\begin{align} \label{eq:lemma:metricallysubregularEQ:A:ineq}
	\dist \lr{ \J_{\gamma A}x  , A^{-1}0} \leq \kappa \dist \lr{0, A\lr{\J_{\gamma A}x }} \leq \frac{\kappa}{\gamma}\norm{x -\J_{\gamma A}x }.
	\end{align}
	Now,
	\begin{align*}
	\dist \lr{x,\lr{\Id -\J_{\gamma A}}^{-1}0 } \stackrel{\cref{eq:lemma:metricallysubregularEQ}}{=} & \dist \lr{x, A^{-1}0}\\
	\leq~ & \norm{x - \J_{\gamma A}x } + \dist \lr{\J_{\gamma A}x  , A^{-1}0} \\
	\stackrel{\cref{eq:lemma:metricallysubregularEQ:A:ineq}}{\leq} & \norm{x - \J_{\gamma A}x } +  \frac{\kappa}{\gamma}\norm{x -\J_{\gamma A}x }\\
	=~ & \lr{ 1 +\frac{\kappa}{\gamma} } \dist \lr{0, \lr{\Id -\J_{\gamma A}} x}.
	\end{align*}
	
	\cref{lemma:metricallysubregularEQ:IdJA}: Let $x \in B[\bar{x}; \delta']$. 
	Utilize \cref{corollary:JcA}\cref{corollary:JcA:gra}  in the last inequality to obtain that 
	\begin{align*}
	\dist \lr{x, A^{-1}0}  \stackrel{\cref{eq:lemma:metricallysubregularEQ}}{=}    \dist \lr{x,\lr{\Id -\J_{\gamma A}}^{-1}0 }  
	\stackrel{\cref{eq:lemma:metricallysubregularEQ:ID'}}{\leq}    \kappa' \dist \lr{0,  \lr{\Id -\J_{\gamma A}}x} 
	=  \kappa' \norm{x-\J_{\gamma A}x}  
	\leq  \kappa'  \gamma \dist \lr{0, Ax}.
	\end{align*}

	\cref{lemma:metricallysubregularEQ:EQ}: This follows immediately from \cref{lemma:metricallysubregularEQ:A} and \cref{lemma:metricallysubregularEQ:IdJA} above. 
\end{proof}

\section{Convergence of the Perturbed or Approximate Method} \label{section:PerturbedApproximate}

In this section, we borrow the terminology used in \cite{Lemaire1996}. Consider some problems with data $D$. 
Let $(\forall k \in \mathbb{N})$ $G_{k} : \mathcal{H} \to \mathcal{H}$ be the iteration mapping  that is defined from the data $D$.
The \emph{basic method}   is generated by conforming to the iteration scheme:
\begin{align*}
x_{0}=x \in \mathcal{H}  \quad \text{and} \quad (\forall k \in \mathbb{N}) ~ x_{k+1} =G_{k}x_{k}.
\end{align*}
With such a basic method, we have the following associated methods. 
\begin{enumerate}
	\item The  \emph{perturbed method}   is given by the iteration scheme:
	\begin{align*}
	x_{0}=x \in \mathcal{H} \quad \text{and} \quad (\forall k \in \mathbb{N}) ~ x_{k+1} =F_{k}x_{k},
	\end{align*}
	where for every  $k \in \mathbb{N}$, the iteration mapping $F_{k} : \mathcal{H} \to \mathcal{H}$ is closely related to $G_{k}$ and depends on  the perturbed (or approximate) data of $D$.
	\item  The \emph{translated basic  method}  is defined by the iteration scheme:
	\begin{align*}
	(\forall i \in \mathbb{N}) \quad \xi_{0}(i) = x_{i}  \in \mathcal{H} \quad \text{and} \quad (\forall k \in \mathbb{N}) ~ \xi_{k+1}(i) =G_{k+i}\xi_{k}(i).
	\end{align*}
	\item The \emph{approximate method}  is defined by the iteration scheme
	\begin{align*}
	x_{0}=x \in \mathcal{H} \quad \text{and} \quad  (\forall k \in \mathbb{N}) ~x_{k+1} =G_{k}x_{k} +e_{k},
	\end{align*}
	where $(e_{k})_{k \in \mathbb{N}}$ is the sequence of \emph{error terms}. 
\end{enumerate}
Note that, as stated in \cite[Remark~2.1]{Lemaire1996}, if  $\xi_{0}(i) = x$ and  $(\forall k \in \mathbb{N})$ $G_{k} \equiv G$, that is, the iterations mappings are independent on the iteration numbers $k$, then the translated basic method coincides with  the basic one.

\cref{prop:PkConverge} is essentially a special case of \cite[Lemma~2.1]{Lemaire1996}. As stated in \cite{Lemaire1996},    \cite[Lemma~2.1]{Lemaire1996}  is \enquote{substantially proved in \cite[Remark~14]{BrezisLions1978}}.
 Notice that \cite[Lemma~2.1]{Lemaire1996} is on a general topology weaker than the norm topology in   Banach spaces.  In \cref{prop:PkConverge}, we simplify the context and add  some  details on the proof of the weak and norm topologies in  Hilbert spaces to make the proof easier to understand, but the idea of the proof is almost the same as that of \cite[Lemma~2.1]{Lemaire1996}.

\begin{lemma} \label{prop:PkConverge}
	Let $\lr{\forall k \in \mathbb{N}}$ $G_{k} : \mathcal{H} \to \mathcal{H}$ be nonexpansive 
and let $(e_{k})_{k \in \mathbb{N}}$ be in $\mathcal{H}$.	Define 
	\begin{subequations} \label{eq:prop:PkConverge}
		\begin{align}
		&(\forall k \in \mathbb{N}) \quad y_{k+1}=G_{k}y_{k}+e_{k} \text{ and } y_{0} \in \mathcal{H}; \label{eq:prop:PkConverge:y}\\
		& (\forall i \in \mathbb{N}) (\forall k \in \mathbb{N}) \quad \xi_{k+1}(i)=G_{k+i}\xi_{k}(i) \text{ and } \xi_{0}(i)=y_{i}. \label{eq:prop:PkConverge:xi}
		\end{align}
	\end{subequations}
	Suppose that $\sum_{k \in \mathbb{N}} \norm{e_{k}} < \infty$ and that for every $i \in \mathbb{N}$, $(\xi_{k}(i))_{k \in \mathbb{N}}$ weakly converges to a point $\xi(i) \in \mathcal{H}$. Then the following statements hold. 
	\begin{enumerate}
		\item \label{prop:PkConverge:xiconverge} There exists a point $\bar{\xi} \in \mathcal{H}$ such that $(\xi(i))_{i \in \mathbb{N}}$ strongly converges to $\bar{\xi}$.
		\item \label{prop:PkConverge:ykweakly} $(y_{k})_{k\in \mathbb{N}}$ converges weakly to $\bar{\xi} = \lim_{i \to \infty} \xi(i)$.
		\item \label{prop:PkConverge:ykstrongly} Suppose that for every $i \in \mathbb{N}$, $(\xi_{k}(i))_{k \in \mathbb{N}}$ strongly converges to a point $\xi(i) \in \mathcal{H}$. Then $(y_{k})_{k\in \mathbb{N}}$ converges strongly to $\bar{\xi} = \lim_{i \to \infty} \xi(i)$.
	\end{enumerate}
\end{lemma}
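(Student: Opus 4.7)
The plan is to exploit the fact that $y_{k+i}$ and $\xi_k(i)$ are two sequences generated by applying the same ordered list of operators $G_i, G_{i+1}, \ldots, G_{k+i-1}$ to the same starting point $y_i$, differing only by whether the error terms $e_j$ are injected. Using nonexpansiveness of each $G_k$, a straightforward induction yields the telescoping estimate
\begin{equation} \label{eq:plan:telescope}
(\forall i \in \mathbb{N})(\forall k \in \mathbb{N}) \quad \norm{y_{k+i} - \xi_k(i)} \leq \sum_{j=i}^{k+i-1} \norm{e_{j}} \leq \varepsilon_{i},
\end{equation}
where $\varepsilon_{i} := \sum_{j \geq i} \norm{e_{j}}$. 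Since $\sum_{k \in \mathbb{N}} \norm{e_{k}} < \infty$, the tail $\varepsilon_{i} \to 0$ as $i \to \infty$. This single estimate is the engine of the whole proof.

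For \cref{prop:PkConverge:xiconverge}, I would fix $i \leq j$ and apply the operators $G_{j}, G_{j+1}, \ldots, G_{j+k-1}$ to both $y_{j}$ and $\xi_{j-i}(i)$, producing $\xi_{k}(j)$ and $\xi_{k+j-i}(i)$ respectively. Nonexpansiveness combined with \cref{eq:plan:telescope} gives $\norm{\xi_{k+j-i}(i) - \xi_{k}(j)} \leq \norm{\xi_{j-i}(i) - y_{j}} \leq \varepsilon_{i}$. Letting $k \to \infty$ and invoking weak lower semicontinuity of the norm on the weak limits $\xi(i)$ and $\xi(j)$ gives $\norm{\xi(i) - \xi(j)} \leq \varepsilon_{i}$, so $(\xi(i))_{i \in \mathbb{N}}$ is Cauchy in $\mathcal{H}$ and converges strongly to some $\bar{\xi}$.

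For \cref{prop:PkConverge:ykweakly}, fix $u \in \mathcal{H}$ and $\varepsilon > 0$. The idea is the decomposition
\begin{equation*}
\innp{y_{k} - \bar{\xi}, u} = \innp{y_{k} - \xi_{k-i}(i), u} + \innp{\xi_{k-i}(i) - \xi(i), u} + \innp{\xi(i) - \bar{\xi}, u}
\end{equation*}
for $k \geq i$. The first term is bounded by $\varepsilon_{i} \norm{u}$ via \cref{eq:plan:telescope}; the third by $\norm{\xi(i) - \bar{\xi}} \norm{u}$, which is small for large $i$ by \cref{prop:PkConverge:xiconverge}. Choose $i$ so that both of these are below $\varepsilon/3$; then use $\xi_{k-i}(i) \weakly \xi(i)$ to bound the middle term below $\varepsilon/3$ for all sufficiently large $k$. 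This gives $y_{k} \weakly \bar{\xi}$.

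Part \cref{prop:PkConverge:ykstrongly} is then essentially a replica of the argument for \cref{prop:PkConverge:ykweakly} with $\innp{\cdot,u}$ replaced by $\norm{\cdot}$ and the triangle inequality in place of linearity of the inner product; the only structural change is that the middle term is now controlled using the stronger hypothesis $\xi_{k}(i) \to \xi(i)$ in norm. The main (rather mild) obstacle is setting up the telescoping estimate \cref{eq:plan:telescope} cleanly by induction on $k$ for fixed $i$, since the indices on the operators shift in both sequences; once that bookkeeping is handled, the rest is an $\varepsilon/3$ argument with a careful order of choosing $i$ first and then $k$.
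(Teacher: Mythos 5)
Your proposal is correct and follows essentially the same route as the paper: a summable telescoping estimate obtained from nonexpansiveness, weak lower semicontinuity of the norm to show $(\xi(i))_{i\in\mathbb{N}}$ is Cauchy, and the same three-term $\varepsilon/3$ decomposition (choosing $i$ first, then $k$) for the weak and strong convergence of $(y_k)_{k\in\mathbb{N}}$. The only cosmetic difference is that the paper derives the key estimates by telescoping the diagonal one-step bound $\norm{\xi_{k}(i)-\xi_{k+1}(i-1)}\leq\norm{e_{i-1}}$, whereas you induct directly on $k$ comparing $y_{k+i}$ with $\xi_k(i)$; both yield the same bounds.
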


\begin{proof}
	First, we claim that
	\begin{align}  \label{eq:prop:PkConverge:xie}
	(\forall k \in \mathbb{N}) (\forall i \in \mathbb{N}\smallsetminus\{0\}) \quad \norm{\xi_{k}(i) -\xi_{k+1}(i-1)} \leq  \norm{e_{i-1}}.
	\end{align} 
 In fact, for every $k \in \mathbb{N}$ and every $i \in \mathbb{N}\smallsetminus\{0\}$, if $k=0$, then 
 \begin{subequations}  \label{eq:prop:PkConverge:k=0}
 	\begin{align}
 	\norm{\xi_{0}(i) -\xi_{1}(i-1) } & \stackrel{\cref{eq:prop:PkConverge:xi}}{=} \norm{ y_{i}-G_{0+i-1}\xi_{0}(i-1) }\\
 	& \stackrel{\cref{eq:prop:PkConverge:xi}}{=}  \norm{ y_{i}-G_{i-1}y_{i-1}}\\ &\stackrel{\cref{eq:prop:PkConverge:y}}{=} \norm{e_{i-1}};
 	\end{align}
 \end{subequations}
	otherwise, by virtue of the nonexpansiveness of $G_{k+i-1}$  in the first   inequality below, we observe that
	\begin{align*}
	\norm{\xi_{k}(i) -\xi_{k+1}(i-1)}& \stackrel{\cref{eq:prop:PkConverge:xi}}{=} \norm{G_{k-1+i}\xi_{k-1}(i) -G_{k+i-1}\xi_{k}(i-1)  }\\
	&~\leq~~
	\norm{\xi_{k-1}(i)- \xi_{k}(i-1)  }\\
	&~\leq~~ \norm{\xi_{0}(i)-\xi_{1}(i-1)} \quad (\text{by induction})\\
	&\stackrel{\cref{eq:prop:PkConverge:k=0}}{=} \norm{e_{i-1}}.
	\end{align*}
	Hence, we establish \cref{eq:prop:PkConverge:xie}.
	
	In view of \cref{eq:prop:PkConverge:xie}, for every $k \in \mathbb{N}$ and every $i \in \mathbb{N}$ with $i -k-1 \geq 0$,
	\begin{subequations}\label{eq:prop:PkConverge:yixi}
		\begin{align}
		\norm{y_{i} -\xi_{k+1}(i-k-1)} & \stackrel{\cref{eq:prop:PkConverge:xi}}{=} \norm{\xi_{0}(i) -\xi_{k+1}(i-k-1)}\\
		&~=~~\norm{\sum^{k}_{t=0} \lr{\xi_{t}(i-t) -\xi_{t+1}(i-t-1)}} \\
		&~\leq~~ \sum^{k}_{t=0}  \norm{\xi_{t}(i-t) -\xi_{t+1}(i-t-1) }\\
		&\stackrel{\cref{eq:prop:PkConverge:xie}}{\leq}~ \sum^{k}_{t=0}  \norm{e_{i-t-1}} =\sum^{i-1}_{j=i-k-1}  \norm{e_{j}}.
		\end{align}
	\end{subequations}
	For every $i \in \mathbb{N} \smallsetminus \{0\}$ and every $k \in \mathbb{N}$, substitute $i$ in \cref{eq:prop:PkConverge:yixi} by $i+k$ to derive that 
	\begin{align} \label{eq:prop:PkConverge:yikxiej}
	\norm{y_{i+k} -\xi_{k+1} (i-1)} \leq \sum^{i+k-1}_{j=i-1} \norm{e_{j}}.
	\end{align}

	\cref{prop:PkConverge:xiconverge}: As a consequence of \cref{eq:prop:PkConverge:xie}, for all $i $,  $p $, and    $k $ in $\mathbb{N}$  with $k \geq p$,
	\begin{subequations} \label{eq:prop:PkConverge:xisume}
		\begin{align}
		\norm{\xi_{k-p}(i+p) -\xi_{k}(i)} &~=~ \norm{\sum^{p}_{t=1} \left( \xi_{k-t} (i+t) -\xi_{k-t+1}(i+t-1) \right)}\\
		&~\leq~ \sum^{p}_{t=1}\norm{\xi_{k-t} (i+t) -\xi_{k-t+1}(i+t-1) }\\
		&\stackrel{\cref{eq:prop:PkConverge:xie}}{\leq}\sum^{p}_{t=1}\norm{e_{i+t-1}} =\sum^{i+p-1}_{j=i} \norm{e_{j}}.
		\end{align}
	\end{subequations}
	According to the assumption that for every $i \in \mathbb{N}$, $(\xi_{k}(i))_{k \in \mathbb{N}}$ weakly converges to a point $\xi(i) \in \mathcal{H}$, we observe that  
	\begin{align} \label{eq:prop:PkConverge:xiweakly}
(\forall i \in \mathbb{N})	(\forall p \in \mathbb{N}) \quad \xi_{k-p}(i+p) -\xi_{k}(i) \weakly \xi(i+p) -\xi(i) \text{ as } k \to \infty.
	\end{align}
Combine   \cite[Lemma~2.42]{BC2017} with  \cref{eq:prop:PkConverge:xiweakly} and \cref{eq:prop:PkConverge:xisume} to deduce that 
	\begin{align}\label{eq:prop:PkConverge:liminfleq}
(\forall i \in \mathbb{N}) (\forall p \in \mathbb{N}) \quad	\norm{ \xi(i+p) -\xi(i)} \stackrel{\cref{eq:prop:PkConverge:xiweakly} }{\leq} \liminf_{k \to \infty} \norm{ \xi_{k-p}(i+p) -\xi_{k}(i) } \stackrel{\cref{eq:prop:PkConverge:xisume}}{\leq}  \sum^{i+p-1}_{j=i} \norm{e_{j}}.
	\end{align}
	Let $\epsilon >0$. Because $\sum_{i \in \mathbb{N}}\norm{e_{i}} <\infty$, we know that there exists $I_{0} \in \mathbb{N}$ such that 
	\begin{align} \label{eq:prop:PkConverge:sumei}
	\sum_{i \geq I_{0}}\norm{e_{i}} < \epsilon.
	\end{align}
	Combine \cref{eq:prop:PkConverge:liminfleq} and \cref{eq:prop:PkConverge:sumei} to obtain that 
	\begin{align*}
	(\forall i \geq I_{0}) (\forall p \in \mathbb{N}) \quad \norm{ \xi(i+p) -\xi(i)} \leq \sum^{i+p-1}_{j=i} \norm{e_{j}} \leq \sum_{i \geq I_{0}}\norm{e_{i}} < \epsilon,
	\end{align*}
	which implies that $(\xi(i))_{i \in \mathbb{N}}$ is a Cauchy sequence in the Hilbert space $\mathcal{H}$. Hence, there exists a point $\bar{\xi} \in \mathcal{H}$ such that $(\xi(i))_{i \in \mathbb{N}}$ strongly converges to $\bar{\xi}$.

	\cref{prop:PkConverge:ykweakly}: Let $u \in \mathcal{H}$. Notice that for every  $i \in \mathbb{N} \smallsetminus \{0\}$ and $k \in \mathbb{N}$,
		\begin{align} \label{eq:prop:PkConverge:ykweakly:threeparts}
		\innp{y_{i+k} -\bar{\xi},u}
		= \innp{ y_{i+k} -\xi_{k+1}(i-1),u} +\innp{ \xi_{k+1}(i-1) -\xi(i-1) ,u}+\innp{\xi(i-1) -\bar{\xi},u}.
		\end{align}	
	Let $\varepsilon >0$. Assume that $u \neq 0$. 
	Due to \cref{prop:PkConverge:xiconverge} above, there exists $I_{1} \in \mathbb{N}$ such that 
	\begin{align} \label{eq:prop:PkConverge:ykweakly:xi}
	(\forall i \geq I_{1} ) \quad \innp{\xi(i-1) -\bar{\xi},u} < \frac{\varepsilon}{3}.
	\end{align}
	Since $\sum_{k \in \mathbb{N}} \norm{e_{k}} < \infty$, we see that there exists $I_{2} \in \mathbb{N}$ such that $\sum_{j\geq I_{2} -1 } \norm{e_{j}} < \frac{\varepsilon}{3 \norm{u}}$. Combine this with \cref{eq:prop:PkConverge:yikxiej} to get that every all $i \geq I_{2}  $ and for every $k \in \mathbb{N}$,
	\begin{align} \label{eq:prop:PkConverge:ykweakly:ykxi}
	\innp{ y_{i+k} -\xi_{k+1}(i-1),u}  \leq   \norm{y_{i+k} -\xi_{k+1} (i-1)} \norm{u} \leq \sum^{i+k-1}_{j=i-1} \norm{e_{j}} \norm{u} < \frac{\varepsilon}{3 }.
	\end{align}
	Set $I:=\max\{ I_{1}, I_{2}\}  $. Inasmuch as for every $i \in \mathbb{N}$, $(\xi_{k}(i))_{k \in \mathbb{N}}$ weakly converges to a point $\xi(i) \in \mathcal{H}$, there exists $K \in \mathbb{N}$ such that 
	\begin{align}\label{eq:prop:PkConverge:ykweakly:xiI}
	(\forall k \geq K) \quad \innp{ \xi_{k+1}(I-1) -\xi(I-1) ,u} < \frac{\varepsilon}{3 }.
	\end{align}
	Taking \cref{eq:prop:PkConverge:ykweakly:threeparts}, \cref{eq:prop:PkConverge:ykweakly:xi}, \cref{eq:prop:PkConverge:ykweakly:ykxi}, and \cref{eq:prop:PkConverge:ykweakly:xiI}  into account, we conclude that for every $n \geq I+K$,
	\begin{align}\label{eq:prop:PkConverge:ykweakly:weakly}
	\innp{y_{n}-\bar{\xi},u} &= \innp{y_{I+(n-I)} -\bar{\xi},u} < \varepsilon.
	\end{align}
	If $u=0$, then \cref{eq:prop:PkConverge:ykweakly:weakly} holds trivially. Altogether, $y_{n} \weakly \bar{\xi}$.
	
	\cref{prop:PkConverge:ykstrongly}:	
	Let $i \in \mathbb{N} \smallsetminus \{0\}$. Clearly, for every $k \in \mathbb{N}$,
		\begin{align} \label{eq:prop:PkConverge:ykstrongly:threeparts}
		 y_{i+k} -\bar{\xi} 
		= \left( y_{i+k} -\xi_{k+1}(i-1) \right) +\left( \xi_{k+1}(i-1) -\xi(i-1) \right) +\left( \xi(i-1) -\bar{\xi}\right).
		\end{align}
	Let $\varepsilon >0$. 
	Due to \cref{prop:PkConverge:xiconverge}, there exists $J_{1} \in \mathbb{N}$ such that 
	\begin{align}\label{eq:prop:PkConverge:ykstrongly:xi}
	(\forall i \geq J_{1} ) \quad \norm{\xi(i-1) -\bar{\xi}} < \frac{\varepsilon}{3}.
	\end{align}
	Based on \cref{eq:prop:PkConverge:yikxiej} and $\sum_{k \in \mathbb{N}} \norm{e_{k}} < \infty$, there exists $J_{2} \in \mathbb{N}$ such that
	\begin{align} \label{eq:prop:PkConverge:ykstrongly:yxi}
(\forall i \geq J_{2}) (\forall k \in \mathbb{N}) \quad	\norm{y_{i+k} -\xi_{k+1} (i-1)} \leq \sum^{i+k-1}_{j=i-1} \norm{e_{j}}  \leq \sum_{j \geq J_{2} -1} \norm{e_{j}} < \frac{\varepsilon}{3}.
	\end{align}
	Denote by $J:=\max \{ J_{1}, J_{2}\} $. Inasmuch as for every $i \in \mathbb{N}$, $(\xi_{k}(i))_{k \in \mathbb{N}}$ strongly converges  to $\xi(i)  $, there exists $N \in \mathbb{N} $ such that  
	\begin{align} \label{eq:prop:PkConverge:ykstrongly:xiJ}
	(\forall k \geq N) \quad \norm{ \xi_{k+1}(J-1) -\xi(J-1) } < \frac{\varepsilon}{3}.
	\end{align} 
	Bearing \cref{eq:prop:PkConverge:ykstrongly:threeparts}, \cref{eq:prop:PkConverge:ykstrongly:xi}, \cref{eq:prop:PkConverge:ykstrongly:yxi}, and \cref{eq:prop:PkConverge:ykstrongly:xiJ} in mind, we obtain that 
	\begin{align*}
	(\forall n \geq J+N) \quad \norm{ y_{n} -\bar{\xi}  } =\norm{y_{J+(n-J)} -\bar{\xi} } <\varepsilon,
	\end{align*}
	which means that $y_{n} \to \bar{\xi}$.
\end{proof}

\begin{theorem} \label{theorem:GkWeakStrongConvergence}
	Let $(\forall k \in \mathbb{N})$ $G_{k} : \mathcal{H} \to \mathcal{H}$ be nonexpansive and let  $(\forall k \in \mathbb{N})$ $F_{k} : \mathcal{H} \to \mathcal{H}$. 
	Let $(e_{k})_{k \in \mathbb{N}}$ be in $\mathcal{H}$ such that $\sum_{k \in \mathbb{N}} \norm{e_{k}} < \infty$.	Define 
	\begin{subequations}
		\begin{align}
		&(\forall k \in \mathbb{N}) \quad x_{k+1}=F_{k}x_{k}+e_{k} \text{ and } x_{0} \in \mathcal{H}; \label{eq:theorem:GkWeakStrongConvergence:y}\\
		&(\forall k \in \mathbb{N}) (\forall i \in \mathbb{N}) \quad \xi_{k+1}(i)=G_{k+i}\xi_{k}(i) \text{ and } \xi_{0}(i)=x_{i}. \label{eq:theorem:GkWeakStrongConvergence:xi}
		\end{align}
	\end{subequations}
	Then the  following statements hold. 
	\begin{enumerate}
		\item \label{theorem:GkWeakStrongConvergence:bounded} Suppose that $\cap_{k\in \mathbb{N}}\Fix G_{k} \neq \varnothing$. Let $\bar{x} \in \cap_{k\in \mathbb{N}}\Fix G_{k} $ and let  $(\forall k \in \mathbb{N})$ $\gamma_{k} \in \mathbb{R}_{+}$ with $\sum_{k \in \mathbb{N}} \gamma_{k} < \infty$.  Suppose that     $\sum_{k \in \mathbb{N}}  \norm{F_{k}\bar{x} -G_{k} \bar{x}} < \infty$ and that $(\forall k \in \mathbb{N})$ $F_{k}$
		is  $(1+\gamma_{k})$-Lipschitz continuous, i.e.,
		\begin{align}\label{eq:theorem:GkWeakStrongConvergence:Lipschitz}
		(\forall k \in \mathbb{N})(\forall  x\in \mathcal{H}) (\forall y \in \mathcal{H}) \quad \norm{F_{k}x- F_{k}y} \leq (1+\gamma_{k}) \norm{x-y}.
		\end{align}
		Then   $(\norm{x_{k} -\bar{x}})_{k \in \mathbb{N}}$ converges to a point in $\mathbb{R}_{+}$ and $(x_{k})_{k \in \mathbb{N}}$ is bounded. 
		
		\item \label{theorem:GkWeakStrongConvergence:xkconverges} Suppose that $\sum_{k \in \mathbb{N}} \norm{F_{k}x_{k} -G_{k}x_{k}} <\infty$ and that 
		for every $ i \in \mathbb{N}$,
		$(\xi_{k}(i))_{k \in \mathbb{N}}$ weakly   converges to a point $\xi(i) \in \mathcal{H}$. Then the following hold. 
		\begin{enumerate}
			\item There exists a point $\bar{\xi} \in \mathcal{H}$ such that $(\xi(i))_{i \in \mathbb{N}}$ strongly converges to $\bar{\xi}$.
			\item $(x_{k})_{k\in \mathbb{N}}$ weakly  converges to $\bar{\xi} = \lim_{i \to \infty} \xi(i)$. 
			\item Suppose further that $(\forall i \in \mathbb{N})$ $(\xi_{k}(i))_{k \in \mathbb{N}}$   converges strongly to a point $\xi(i) \in \mathcal{H}$. Then  $(x_{k})_{k\in \mathbb{N}}$   converges strongly to $\bar{\xi} = \lim_{i \to \infty} \xi(i)$. 
		\end{enumerate}
		
	\end{enumerate}
\end{theorem}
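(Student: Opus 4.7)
The plan is to handle the two parts separately, reducing each to a result already proved. Throughout, the key philosophical point is that part (i) is a quantitative Fejér-type estimate obtained by triangle-inequality book-keeping, while part (ii) is a reduction to \cref{prop:PkConverge} after absorbing the discrepancy $F_k x_k - G_k x_k$ into the error sequence.

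For part (i), I would apply \cref{fact:alphakINEQ} to the sequence $\alpha_k := \norm{x_k - \bar{x}}$. Using $\bar{x} \in \Fix G_k$, the iteration $x_{k+1} = F_k x_k + e_k$, and the Lipschitz bound \cref{eq:theorem:GkWeakStrongConvergence:Lipschitz}, the triangle inequality yields
\begin{align*}
\norm{x_{k+1} - \bar{x}}
&\leq \norm{F_k x_k - F_k \bar{x}} + \norm{F_k \bar{x} - G_k \bar{x}} + \norm{G_k \bar{x} - \bar{x}} + \norm{e_k}\\
&\leq (1+\gamma_k)\norm{x_k - \bar{x}} + \bigl(\norm{F_k \bar{x} - G_k \bar{x}} + \norm{e_k}\bigr).
\end{align*}
Set $\beta_k := 0$ and $\varepsilon_k := \norm{F_k \bar{x} - G_k \bar{x}} + \norm{e_k}$. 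By the hypothesis $\sum_k \norm{F_k \bar{x} - G_k \bar{x}} < \infty$, the summability of $(\norm{e_k})_{k \in \mathbb{N}}$, and $\sum_k \gamma_k < \infty$, all three series $\sum \gamma_k$, $\sum \beta_k$, $\sum \varepsilon_k$ are finite. Thus \cref{fact:alphakINEQ} applies and gives $\lim_{k \to \infty} \norm{x_k - \bar{x}} \in \mathbb{R}_+$, from which boundedness of $(x_k)_{k \in \mathbb{N}}$ is immediate.

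For part (ii), the main idea is to rewrite the perturbed iteration \cref{eq:theorem:GkWeakStrongConvergence:y} in a form to which \cref{prop:PkConverge} directly applies. Define
\begin{align*}
(\forall k \in \mathbb{N}) \quad \widetilde{e}_k := (F_k x_k - G_k x_k) + e_k,
\end{align*}
so that $x_{k+1} = G_k x_k + \widetilde{e}_k$. The hypotheses $\sum_k \norm{F_k x_k - G_k x_k} < \infty$ and $\sum_k \norm{e_k} < \infty$ together give $\sum_k \norm{\widetilde{e}_k} < \infty$. Moreover, the translated basic method defined in \cref{eq:theorem:GkWeakStrongConvergence:xi} has initial data $\xi_0(i) = x_i$, which is precisely what \cref{prop:PkConverge} requires once we identify $y_k$ with $x_k$ and the error sequence with $(\widetilde{e}_k)_{k \in \mathbb{N}}$. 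Therefore all three conclusions of \cref{prop:PkConverge} transport verbatim: the diagonal limits $\xi(i)$ form a strongly Cauchy sequence converging to some $\bar{\xi} \in \mathcal{H}$, the sequence $(x_k)_{k \in \mathbb{N}}$ weakly converges to this $\bar{\xi}$, and under the strengthened hypothesis of strong convergence of each $(\xi_k(i))_{k \in \mathbb{N}}$, the convergence $x_k \to \bar{\xi}$ is strong.

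The only subtlety—hardly an obstacle—is to verify that the rewriting $x_{k+1} = G_k x_k + \widetilde{e}_k$ does not alter the translated basic method from \cref{eq:theorem:GkWeakStrongConvergence:xi}; this is by construction, because \cref{eq:theorem:GkWeakStrongConvergence:xi} is driven only by $G_{k+i}$ and the initial point $x_i$, independently of which error scheme produced $x_i$. Everything else is a mechanical transcription of \cref{prop:PkConverge}.
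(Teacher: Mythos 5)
Your proposal is correct and follows essentially the same route as the paper's own proof: part (i) via the triangle inequality $\norm{x_{k+1}-\bar{x}} \leq (1+\gamma_{k})\norm{x_{k}-\bar{x}} + \norm{F_{k}\bar{x}-G_{k}\bar{x}} + \norm{e_{k}}$ combined with \cref{fact:alphakINEQ}, and part (ii) by absorbing the discrepancy into $\tilde{e}_{k} := F_{k}x_{k}-G_{k}x_{k}+e_{k}$ and invoking \cref{prop:PkConverge}. No gaps.
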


\begin{proof}
	\cref{theorem:GkWeakStrongConvergence:bounded}:	 Employing $(\forall k \in \mathbb{N})$ $\bar{x} =G_{k}\bar{x}$ in the first inequality below, we observe that for every $ k \in \mathbb{N}$,
	\begin{subequations}\label{theorem:GkWeakStrongConvergence:bounded:xk}
		\begin{align}
		\norm{x_{k+1}-\bar{x}} & \stackrel{\cref{eq:theorem:GkWeakStrongConvergence:y}}{=} \norm{ F_{k}x_{k}+e_{k} -\bar{x}}\\
		&~~\leq~~ \norm{F_{k}x_{k}-F_{k}\bar{x} } +\norm{F_{k}\bar{x} -G_{k}\bar{x} }+\norm{e_{k}}\\
		&\stackrel{\cref{eq:theorem:GkWeakStrongConvergence:Lipschitz}}{\leq} (1+\gamma_{k})\norm{ x_{k}-\bar{x} } +\norm{F_{k}\bar{x} -G_{k}\bar{x} }+\norm{e_{k}}.
		\end{align}
	\end{subequations}
	Combine the assumptions, $\sum_{k \in \mathbb{N}} \gamma_{k} < \infty$ and $\sum_{k \in \mathbb{N}} \left( \norm{F_{k}\bar{x} -G_{k}\bar{x} }+ \norm{e_{k}} \right)< \infty$, with 	\cref{fact:alphakINEQ} and \cref{theorem:GkWeakStrongConvergence:bounded:xk} to obtain the convergence of $(\norm{x_{k} -\bar{x}})_{k \in \mathbb{N}}$, which forces the boundedness of $(x_{k})_{k \in \mathbb{N}}$.

	\cref{theorem:GkWeakStrongConvergence:xkconverges}: Set $(\forall k \in \mathbb{N})$ $\tilde{e}_{k} :=F_{k}x_{k} -G_{k}x_{k} +e_{k}$.  Then \cref{eq:theorem:GkWeakStrongConvergence:y} becomes $x_{0} \in \mathcal{H}$ and
	\begin{align} \label{eq:theorem:GkWeakStrongConvergence:xk}
	(\forall k \in \mathbb{N}) \quad x_{k+1}=G_{k}x_{k} +F_{k}x_{k} -G_{k}x_{k}+e_{k} = G_{k}x_{k} + \tilde{e}_{k}.  
	\end{align}	
	By assumptions,
		\begin{align} \label{eq:theorem:GkWeakStrongConvergence:xi:boundtildeek}
		\sum_{k \in \mathbb{N}}	\norm{ \tilde{e}_{k} }  = 	\sum_{k \in \mathbb{N}} \norm{F_{k}x_{k} -G_{k}x_{k} +e_{k}}  
		  \leq \sum_{k \in \mathbb{N}} \norm{F_{k}x_{k} -G_{k}x_{k} } +  \sum_{k \in \mathbb{N}} \norm{e_{k}} <\infty.
		\end{align}
	Hence, bearing \cref{eq:theorem:GkWeakStrongConvergence:xk} and \cref{eq:theorem:GkWeakStrongConvergence:xi:boundtildeek} in mind and applying \cref{prop:PkConverge} with $(\forall k \in \mathbb{N})$ $y_{k}=x_{k}$ and $e_{k} =\tilde{e}_{k}$, we obtain all required results in \cref{theorem:GkWeakStrongConvergence:xkconverges}.
\end{proof}

\begin{remark}  \label{remark:GkWeakStrongConvergence}
	\begin{enumerate}
		\item 
		\cite[Proposition~2.1]{Lemaire1996}  proved that under a topology weaker than the norm topology in Banach spaces,  the convergence of translated basic methods ensures  the convergence of the perturbed method. 
		
		\label{remark:GkWeakStrongConvergence:I} \cref{theorem:GkWeakStrongConvergence} specifies the context of \cite[Proposition~2.1]{Lemaire1996} to the weak and norm topologies in Hilbert spaces  and affirm that  the weak convergence (resp.\,strong convergence) of translated basic methods necessitates the weak convergence (resp.\,strong convergence) of the associated method with both approximation and perturbation. In addition, in \cref{theorem:GkWeakStrongConvergence}\cref{theorem:GkWeakStrongConvergence:xkconverges}, we get rid of extra assumptions in \cite[Proposition~2.1]{Lemaire1996} and keep only necessary ones for deducing the required convergence, which will facilitate future applications. 
		
		\item 	The main idea of the proof of \cref{theorem:GkWeakStrongConvergence}\cref{theorem:GkWeakStrongConvergence:xkconverges} is essentially from the proof of  \cite[Proposition~2.1]{Lemaire1996}. Note that this idea 
		is also used in the proof of \cite[Theorem~4]{LiangFadiliPeyre2016} to prove the weak convergence of non-stationary Krasnosel'ski\v{\i}-Mann Iterations.
	\end{enumerate}
\end{remark}

In fact, 
if we consider \cite[Proposition~2.1]{Lemaire1996} only for weak and norm topologies in Hilbert spaces, then it reduces to \cref{cor:Fkxkconverge} below.
\begin{corollary} \label{cor:Fkxkconverge}
	Let $(\forall k \in \mathbb{N})$ $\gamma_{k} \in \mathbb{R}_{+}$ with $\sum_{k \in \mathbb{N}} \gamma_{k} < \infty$.	Let $(\forall k \in \mathbb{N})$ $G_{k} : \mathcal{H} \to \mathcal{H}$ be nonexpansive and let  $(\forall k \in \mathbb{N})$ $F_{k} : \mathcal{H} \to \mathcal{H}$ be $(1+\gamma_{k})$-Lipschitz continuous.	 	Define 
	\begin{subequations}
		\begin{align}
		&(\forall k \in \mathbb{N}) \quad x_{k+1}=F_{k}x_{k}  \text{ and } x_{0} \in \mathcal{H};\\ 
		& (\forall i \in \mathbb{N}) (\forall k \in \mathbb{N}) \quad \xi_{k+1}(i)=G_{k+i}\xi_{k}(i) \text{ and } \xi_{0}(i)=x_{i}.  
		\end{align}
	\end{subequations}
	Suppose that $\cap_{k \in \mathbb{N}} \Fix G_{k} \neq \varnothing$,  that    
	\begin{align}  \label{eq:cor:Fkxkconverge}
	(\forall \rho >0)\quad \sum_{k \in \mathbb{N}} \sup_{\norm{x} \leq \rho} \norm{F_{k}x-G_{k}x} <\infty,
	\end{align}
	and that $(\forall i \in \mathbb{N})$ $(\xi_{k}(i))_{k \in \mathbb{N}}$  converges weakly  (resp.\,strongly) to a point $\xi(i) \in \mathcal{H}$. Then  $(x_{k})_{k\in \mathbb{N}}$ converges weakly  (resp.\,strongly).
\end{corollary}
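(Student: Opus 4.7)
The plan is to reduce \cref{cor:Fkxkconverge} to an application of \cref{theorem:GkWeakStrongConvergence} with the error sequence chosen as $(\forall k \in \mathbb{N})$ $e_{k}=0$. Since $\sum_{k \in \mathbb{N}} \|e_{k}\|=0<\infty$, the only hypotheses of \cref{theorem:GkWeakStrongConvergence} that require checking are the two summability conditions
\[
\sum_{k \in \mathbb{N}} \|F_{k}\bar{x}-G_{k}\bar{x}\|<\infty \quad\text{and}\quad \sum_{k \in \mathbb{N}} \|F_{k}x_{k}-G_{k}x_{k}\|<\infty.
\]

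First, I would fix any $\bar{x}\in \cap_{k\in \mathbb{N}} \Fix G_{k}$, set $\rho_{0}:=\|\bar{x}\|$, and observe that by \cref{eq:cor:Fkxkconverge} applied with $\rho=\rho_{0}$,
\[
\sum_{k \in \mathbb{N}} \|F_{k}\bar{x}-G_{k}\bar{x}\| \leq \sum_{k \in \mathbb{N}} \sup_{\|x\|\leq \rho_{0}} \|F_{k}x-G_{k}x\|<\infty.
\]
Combined with the $(1+\gamma_{k})$-Lipschitz assumption on $F_{k}$ and $\sum_{k \in \mathbb{N}} \gamma_{k}<\infty$, this is exactly what is needed to invoke \cref{theorem:GkWeakStrongConvergence}\cref{theorem:GkWeakStrongConvergence:bounded}, which delivers the convergence of $(\|x_{k}-\bar{x}\|)_{k \in \mathbb{N}}$ in $\mathbb{R}_{+}$ and, consequently, the boundedness of $(x_{k})_{k \in \mathbb{N}}$.

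Next, having the boundedness of $(x_{k})_{k\in\mathbb{N}}$ in hand, I would define $\rho:=\sup_{k \in \mathbb{N}} \|x_{k}\|<\infty$ and estimate
\[
\sum_{k \in \mathbb{N}} \|F_{k}x_{k}-G_{k}x_{k}\| \leq \sum_{k \in \mathbb{N}} \sup_{\|x\|\leq \rho} \|F_{k}x-G_{k}x\|<\infty,
\]
where the last inequality is again \cref{eq:cor:Fkxkconverge}, now applied with the radius $\rho$. This is precisely the remaining hypothesis of \cref{theorem:GkWeakStrongConvergence}\cref{theorem:GkWeakStrongConvergence:xkconverges}, so together with the assumed (weak or strong) convergence of $(\xi_{k}(i))_{k \in \mathbb{N}}$ for each $i \in \mathbb{N}$, \cref{theorem:GkWeakStrongConvergence}\cref{theorem:GkWeakStrongConvergence:xkconverges} yields that $(x_{k})_{k \in \mathbb{N}}$ converges weakly (resp.\ strongly) to $\bar{\xi}=\lim_{i \to \infty} \xi(i)$.

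There is no real obstacle; the only subtle point is the two-stage use of hypothesis \cref{eq:cor:Fkxkconverge}: first with the \emph{fixed} radius $\rho_{0}=\|\bar{x}\|$ to obtain boundedness of $(x_{k})_{k \in \mathbb{N}}$ via \cref{theorem:GkWeakStrongConvergence:bounded}, and then with the \emph{a posteriori} radius $\rho=\sup_{k} \|x_{k}\|$ to upgrade to the full conclusion of \cref{theorem:GkWeakStrongConvergence:xkconverges}. This is what replaces the pointwise summability assumption $\sum_{k} \|F_{k}x_{k}-G_{k}x_{k}\|<\infty$ in \cref{theorem:GkWeakStrongConvergence} by the uniform-on-bounded-sets summability \cref{eq:cor:Fkxkconverge}, which has the advantage of being expressible purely in terms of the data $(F_{k},G_{k})_{k\in \mathbb{N}}$ and is therefore closer to the formulation in \cite[Proposition~2.1]{Lemaire1996}.
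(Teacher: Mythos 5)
Your proposal is correct and follows essentially the same route as the paper's own proof: both apply \cref{eq:cor:Fkxkconverge} first with the radius $\norm{\bar{x}}$ to verify the hypothesis of \cref{theorem:GkWeakStrongConvergence}\cref{theorem:GkWeakStrongConvergence:bounded} and obtain boundedness of $(x_{k})_{k \in \mathbb{N}}$, and then again with the a posteriori radius $\sup_{k \in \mathbb{N}}\norm{x_{k}}$ to invoke \cref{theorem:GkWeakStrongConvergence}\cref{theorem:GkWeakStrongConvergence:xkconverges} with $e_{k}\equiv 0$. No gaps.
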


\begin{proof}
Let $\bar{x} \in \cap_{k \in \mathbb{N}} \Fix G_{k}$. 	It is clear that 
	\begin{align*}
	\sum_{k \in \mathbb{N}}  \norm{F_{k}\bar{x} -G_{k} \bar{x}} \leq \sum_{k \in \mathbb{N}} \sup_{\norm{x} \leq \norm{\bar{x}}} \norm{F_{k}x-G_{k}x} \stackrel{\cref{eq:cor:Fkxkconverge}}{<} \infty. 
	\end{align*}
	Hence, combine  assumptions above with  \cref{theorem:GkWeakStrongConvergence}\cref{theorem:GkWeakStrongConvergence:bounded} to deduce that there exists $\bar{\rho} \in \mathbb{R}_{++}$ such that  $(\forall k \in \mathbb{N})$ $\norm{x_{k}} \leq \bar{\rho} $.  Furthermore, we observe that 
	\begin{align*}
	\sum_{k \in \mathbb{N}} \norm{F_{k}x_{k} -G_{k}x_{k}} \leq  \sum_{k \in \mathbb{N}} \sup_{\norm{x} \leq \bar{\rho}} \norm{F_{k}x-G_{k}x}  \stackrel{\cref{eq:cor:Fkxkconverge}}{<} \infty.
	\end{align*}
	Therefore, due to \cref{theorem:GkWeakStrongConvergence}\cref{theorem:GkWeakStrongConvergence:xkconverges}, we obtain the required   convergence of $(x_{k})_{k\in \mathbb{N}}$. 
\end{proof}

To end  this section, we show the following easy but powerful result.  In particular, \cref{corollary:stabilityxkconverge} illustrates that the weak convergence (resp.\,strong convergence) of the translated basic methods implies  the weak convergence (resp.\,strong convergence) of the associated approximate method. 
\begin{corollary} \label{corollary:stabilityxkconverge}
	Let   $(\forall k \in \mathbb{N})$ $G_{k} : \mathcal{H} \to \mathcal{H}$ be nonexpansive and
	let $(e_{k})_{k \in \mathbb{N}}$ be in $\mathcal{H}$ such that $\sum_{k \in \mathbb{N}} \norm{e_{k}} < \infty$.	Define 
	\begin{align*}
	&(\forall k \in \mathbb{N}) \quad x_{k+1}=G_{k}x_{k}+e_{k} \text{ and } x_{0} \in \mathcal{H};\\
	&(\forall k \in \mathbb{N}) (\forall i \in \mathbb{N}) \quad \xi_{k+1}(i)=G_{k+i}\xi_{k}(i) \text{ and } \xi_{0}(i)=x_{i}.  
	\end{align*}
  Let $C$ be a nonempty closed subset of $\mathcal{H}$. 
	Suppose that $(\forall i \in \mathbb{N})$ $\lr{\xi_{k}(i) }_{k \in \mathbb{N}}$	converges  weakly  (resp.\,converges strongly) to a point in $C$. Then $(x_{k})_{k \in \mathbb{N}}$  converges weakly  (resp.\,converges strongly) to a point in $C$.
\end{corollary}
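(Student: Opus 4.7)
My plan is to observe that the corollary is a direct specialization of \cref{theorem:GkWeakStrongConvergence} with the choice $F_{k}:=G_{k}$, together with a one-line closedness argument to place the limit in $C$.

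First I would take $(\forall k \in \mathbb{N})$ $F_{k}:=G_{k}$ and $\gamma_{k}:=0$. Then the iteration $x_{k+1}=G_{k}x_{k}+e_{k}$ coincides with \cref{eq:theorem:GkWeakStrongConvergence:y}, each $F_{k}$ is $(1+\gamma_{k})$-Lipschitz continuous (being nonexpansive), and the series $\sum_{k \in \mathbb{N}}\norm{F_{k}x_{k}-G_{k}x_{k}}$ is trivially zero, hence finite. The hypothesis that $\sum_{k \in \mathbb{N}}\norm{e_{k}}<\infty$ is assumed, and the translated basic sequences $(\xi_{k}(i))_{k \in \mathbb{N}}$ are exactly the ones built from $G_{k}$ in \cref{eq:theorem:GkWeakStrongConvergence:xi}.

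Next I would invoke \cref{theorem:GkWeakStrongConvergence}\cref{theorem:GkWeakStrongConvergence:xkconverges}. In the weak case, the hypothesis that $(\xi_{k}(i))_{k \in \mathbb{N}}$ converges weakly to $\xi(i)$ for each $i$ is in force, so the theorem yields a point $\bar{\xi}\in \mathcal{H}$ such that $\xi(i)\to \bar{\xi}$ strongly as $i\to\infty$, and $x_{k}\weakly \bar{\xi}$. In the strong case, the stronger hypothesis triggers part (c) of the same result, upgrading the convergence of $(x_{k})_{k \in \mathbb{N}}$ to strong convergence to the same $\bar{\xi}$.

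Finally, I would argue that $\bar{\xi}\in C$ using closedness. By assumption, $\xi(i)\in C$ for every $i\in \mathbb{N}$, and \cref{theorem:GkWeakStrongConvergence}\cref{theorem:GkWeakStrongConvergence:xkconverges}(a) guarantees $\xi(i)\to \bar{\xi}$ \emph{strongly} (regardless of whether the convergence of each $(\xi_{k}(i))_{k \in \mathbb{N}}$ is weak or strong). Since $C$ is closed in the norm topology, the strong limit $\bar{\xi}$ lies in $C$. This is the only delicate point worth noting: even in the weak statement, the intermediate limits $\xi(i)$ form a strongly convergent sequence in $C$, so no weak closedness of $C$ is required. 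Combining, $(x_{k})_{k \in \mathbb{N}}$ converges weakly (resp.\ strongly) to $\bar{\xi}\in C$, as desired.
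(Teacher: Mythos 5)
Your proof is correct and follows essentially the same route as the paper, which also applies \cref{theorem:GkWeakStrongConvergence}\cref{theorem:GkWeakStrongConvergence:xkconverges} with $F_{k}=G_{k}$ and notes that $\sum_{k\in\mathbb{N}}\norm{F_{k}x_{k}-G_{k}x_{k}}=0<\infty$. You additionally spell out the one step the paper leaves implicit, namely that $\bar{\xi}\in C$ because the intermediate limits $\xi(i)\in C$ converge \emph{strongly} to $\bar{\xi}$ and $C$ is norm-closed, which is exactly the right justification.
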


\begin{proof}
	It is trivial that $\sum_{k \in \mathbb{N}} \norm{G_{k}x_{k} -G_{k}x_{k}} =0 <\infty$.
Therefore, it is not difficult to derive the required results by applying \cref{theorem:GkWeakStrongConvergence}\cref{theorem:GkWeakStrongConvergence:xkconverges} with $(\forall k \in \mathbb{N})$ $F_{k} =G_{k}$. 

Alternatively, we can also get the required result directly from \cref{prop:PkConverge}.
\end{proof}

 \section{Convergence of Relaxation Variants of Krasnosel'ski\v{\i}-Mann Iterations} \label{sec:ConvergenceKMI}

\cref{fact:KMIterations} shows the weak convergence of the Krasnosel'ski\v{\i}-Mann iterations. Note that although in \cite[Proposition~5.16]{BC2017}, $\alpha $ is in $ \left]0,1\right[$, based on \cite[Theorem~5.15]{BC2017} and the proof of  \cite[Proposition~5.16]{BC2017}, there is little difficulty to extend the range of $\alpha $ to $\left]0,1\right]$.

\begin{fact} {\rm \cite[Proposition~5.16]{BC2017}} \label{fact:KMIterations}
	Let $\alpha \in \left]0,1\right]$, let $T: \mathcal{H} \to \mathcal{H}$ be an $\alpha$-averaged operator such that $\Fix T \neq \varnothing$, let $\lr{\lambda_{k}}_{k \in \mathbb{N}}$ be a sequence in $\left[0,\frac{1}{\alpha}\right]$ such that $\sum_{k \in \mathbb{N}} \lambda_{k} \lr{1 - \alpha \lambda_{k}} =\infty$, and let $x_{0} \in \mathcal{H}$. Set 
	\begin{align*}
	(\forall k \in \mathbb{N}) \quad x_{k+1} = x_{k} + \lambda_{k} \lr{Tx_{k} -x_{k}}.
	\end{align*}
	Then $(x_{k})_{k \in \mathbb{N}}$ converges weakly to a point in $\Fix T$.
\end{fact}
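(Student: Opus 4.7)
The plan is to follow the standard Opial-type argument for Fejér-monotone sequences combined with the demiclosedness principle for $\Id - T$, using the already-established \cref{lemma:yxzx} to do most of the computational heavy lifting. Fix $\bar{x}\in\Fix T$ (nonempty by hypothesis). Applying \cref{lemma:yxzx}\cref{lemma:yxzx:norm} with $\eta=0$, $\lambda=\lambda_{k}$, and $x=x_{k}$ gives, for every $k\in\mathbb{N}$,
\begin{equation*}
\norm{x_{k+1}-\bar{x}}^{2}\;\leq\;\norm{x_{k}-\bar{x}}^{2}-\frac{\lambda_{k}(1-\alpha\lambda_{k})}{\alpha}\,\norm{x_{k}-Tx_{k}}^{2}.
\end{equation*}
Since $\lambda_{k}\in[0,1/\alpha]$, the subtracted term is nonnegative, so $(\norm{x_{k}-\bar{x}})_{k\in\mathbb{N}}$ is nonincreasing, hence convergent. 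In particular $(x_{k})_{k\in\mathbb{N}}$ is bounded and Fejér monotone with respect to $\Fix T$.

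Next, I would telescope the above inequality to conclude $\sum_{k\in\mathbb{N}}\lambda_{k}(1-\alpha\lambda_{k})\norm{x_{k}-Tx_{k}}^{2}<\infty$. Now \cref{lemma:yxzx}\cref{lemma:yxzx:lambdaalpha} (again with $\eta=0$ and using $\lambda_{k}\in[0,1/\alpha]$) yields $\norm{Tx_{k+1}-x_{k+1}}\leq\norm{Tx_{k}-x_{k}}$, so $(\norm{Tx_{k}-x_{k}}^{2})_{k\in\mathbb{N}}$ is itself nonincreasing. Combining a summable series of a nonincreasing nonnegative sequence against the divergent weighting $\sum_{k\in\mathbb{N}}\lambda_{k}(1-\alpha\lambda_{k})=\infty$ forces $\norm{Tx_{k}-x_{k}}\to 0$. (Concretely, if the decreasing limit were some $c>0$, then for large $k$ one would have $\norm{Tx_{k}-x_{k}}^{2}\geq c^{2}/2$, contradicting the finite sum.)

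For the weak convergence, I would invoke the demiclosedness principle for nonexpansive operators: $T$ is nonexpansive (being $\alpha$-averaged), so $\Id-T$ is demiclosed at $0$. Given any $\bar{y}\in\Omega((x_{k})_{k\in\mathbb{N}})$, pick a subsequence $x_{k_{i}}\weakly\bar{y}$; together with $\norm{Tx_{k_{i}}-x_{k_{i}}}\to 0$, demiclosedness yields $\bar{y}\in\Fix T$. Thus $\Omega((x_{k})_{k\in\mathbb{N}})\subseteq\Fix T$.

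Finally, a standard Opial-style argument closes the proof: since $(\norm{x_{k}-z})_{k\in\mathbb{N}}$ converges for every $z\in\Fix T$, and every weak cluster point lies in $\Fix T$, the sequence $(x_{k})_{k\in\mathbb{N}}$ has a unique weak cluster point and therefore converges weakly to a point in $\Fix T$. The only nontrivial step is proving $\norm{Tx_{k}-x_{k}}\to 0$ from a summable series with possibly vanishing weights, but the monotonicity provided by \cref{lemma:yxzx}\cref{lemma:yxzx:lambdaalpha} bypasses the usual need for $\liminf\lambda_{k}(1-\alpha\lambda_{k})>0$; the remaining pieces are textbook (demiclosedness and Opial's lemma) and require no new ideas.
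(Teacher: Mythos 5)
Your proof is correct, but note that the paper does not actually prove this statement: it is imported as a \emph{Fact} from \cite[Proposition~5.16]{BC2017}, with only the remark that extending $\alpha$ from $\left]0,1\right[$ to $\left]0,1\right]$ is routine. So the comparison is really with the textbook argument. There the result is obtained by a reduction: writing $T=(1-\alpha)\Id+\alpha R$ with $R$ nonexpansive turns the iteration into $x_{k+1}=x_{k}+\mu_{k}(Rx_{k}-x_{k})$ with $\mu_{k}:=\alpha\lambda_{k}\in[0,1]$ and $\sum_{k}\mu_{k}(1-\mu_{k})=\alpha\sum_{k}\lambda_{k}(1-\alpha\lambda_{k})=\infty$, and one then invokes the nonexpansive Krasnosel'ski\v{\i}--Mann theorem \cite[Theorem~5.15]{BC2017}. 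You instead work directly with the averaged operator $T$, letting \cref{lemma:yxzx}\cref{lemma:yxzx:norm}$\&$\cref{lemma:yxzx:lambdaalpha} supply the two key facts (the Fej\'er-type inequality with the $\lambda_{k}\lr{\frac{1}{\alpha}-\lambda_{k}}$ deficit, and the monotonicity of the residuals $\norm{Tx_{k}-x_{k}}$); the overall skeleton --- Fej\'er monotonicity, telescoping to a summable weighted residual series, monotone residuals forcing $\norm{Tx_{k}-x_{k}}\to 0$ despite possibly vanishing weights, demiclosedness of $\Id-T$, and the Opial-type criterion \cite[Theorem~5.5]{BC2017} --- is exactly that of \cite[Theorem~5.15]{BC2017}, just phrased without the detour through $R$. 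The one delicate step, extracting $\norm{Tx_{k}-x_{k}}\to 0$ from $\sum_{k}\lambda_{k}(1-\alpha\lambda_{k})\norm{Tx_{k}-x_{k}}^{2}<\infty$ when $\lambda_{k}(1-\alpha\lambda_{k})$ may tend to $0$, is handled correctly by your monotonicity argument (a nonincreasing nonnegative sequence with a divergent weight sum must have limit $0$). A side benefit of your route is that it covers $\alpha=1$ with no additional comment, which is precisely the extension the paper asserts without proof.
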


Denote by $\ell^{1}_{+} := \left\{ (\alpha_{k})_{k \in \mathbb{N}}   ~:~ \sum_{k \in \mathbb{N}} \alpha_{k} < \infty \text{ and } (\forall k \in \mathbb{N}) \alpha_{k} \in \mathbb{R}_{+} \right\}$.
\begin{definition}  \label{defin:FMonotone}  {\rm \cite[Definitions~5.1 and 5.32]{BC2017}} 
	Let $C$ be a nonempty subset of $\mathcal{H}$ and let $(x_{k})_{k \in \mathbb{N}}$ be a sequence in $\mathcal{H}$. Then 
	\begin{enumerate}
		\item \label{defin:FMonotone:Fejer} $(x_{k})_{k \in \mathbb{N}}$ is \emph{Fej\'er  monotone with respect to $C$} 	\index{Fej\'er  monotone} if  
		\begin{align*}
		(\forall x \in C) (\forall k \in \mathbb{N}) \quad \norm{x_{k+1} -x} \leq \norm{x_{k} -x}.	
		\end{align*}
		\item  \label{defin:FMonotone:QuasiFejer}  $(x_{k})_{k \in \mathbb{N}}$ is \emph{quasi-Fej\'er  monotone with respect to $C$} if
		\begin{align*}
		(\forall x \in C) \lr{\exists (\varepsilon_{k})_{k \in \mathbb{N}} \in \ell^{1}_{+}} (\forall k \in \mathbb{N})  \quad \norm{x_{k+1} -x }^{2} \leq \norm{x_{k} -x}^{2} +\varepsilon_{k}.
		\end{align*}
	\end{enumerate}
\end{definition}

\subsection{Inexact non-stationary Krasnosel'ski\v{\i}-Mann iterations }\label{section:NonstationaryKMI}

In this subsection, we provide some properties of the inexact  non-stationary Krasnosel'ski\v{\i}-Mann iterations.

\cref{theorem:KMBasic}   is inspired by \cite[Theorem~5.15 and Proposition~5.34]{BC2017}. In particular, \cref{theorem:KMBasic}\cref{theorem:KMBasic:quasiFejer} generalizes \cite[Proposition~5.34(i)]{BC2017} by replacing the nonexpansive operator $T$ therein with a sequence of averaged operators $(T_{k})_{k \in \mathbb{N}}$; moreover, \cref{theorem:KMBasic}\cref{theorem:KMBasic:liminf} extends  \cite[Theorem~5.15(ii)]{BC2017} from the exact Krasnosel'ski\v{\i}-Mann iterations to inexact  non-stationary Krasnosel'ski\v{\i}-Mann iterations.
\begin{theorem} \label{theorem:KMBasic}
	Let $(\alpha_{k})_{k\in \mathbb{N}}$ be in $\left]0,1\right]$, let $(\forall k \in \mathbb{N})$ $\lambda_{k} \in \left[ 0,\frac{1}{\alpha_{k}}  \right]$,  and let $(\forall k \in \mathbb{N})$  $T_{k} : \mathcal{H} \to \mathcal{H}$ be $\alpha_{k}$-averaged. Let $x_{0} $ be in $ \mathcal{H}$. Define  
	\begin{align*}   
	\lr{\forall k \in \mathbb{N}} \quad x_{k+1} =(1-\lambda_{k})x_{k} +\lambda_{k} T_{k}x_{k} +\eta_{k}e_{k}.
	\end{align*}  
	Suppose that $ \cap_{k\in \mathbb{N}}\Fix T_{k} \neq \varnothing$. Then the following statements hold. 
	\begin{enumerate}
		
		\item \label{theorem:KMBasic:induction} $(\forall \bar{x} \in \cap_{k\in \mathbb{N}}\Fix T_{k})$ $(\forall k \in \mathbb{N})$ $\norm{x_{k+1} -\bar{x}} \leq  \norm{x_{0} -\bar{x}} + \sum^{k}_{i =0} \eta_{i}\norm{e_{i}}$.
		
		\item  \label{theorem:KMBasic:sumek} Suppose that $\sum_{k \in \mathbb{N}}   \eta_{k} \norm{e_{k}} <\infty$. Then the following hold. 
		\begin{enumerate}
		\item  \label{theorem:KMBasic:quasiFejer} $(x_{k})_{k \in \mathbb{N}}$ is quasi-Fej\'er monotone with respect to $\cap_{k\in \mathbb{N}}\Fix T_{k}$.
		
		\item  \label{theorem:KMBasic:sum} $\sum_{k \in \mathbb{N}} \lambda_{k}  \lr{\frac{1}{\alpha_{k}} -\lambda_{k}} \norm{x_{k} -T_{k}x_{k}}^{2} < \infty$.
		
		\item \label{theorem:KMBasic:liminf} Suppose that $\sum_{k \in \mathbb{N}} \lambda_{k}  \lr{\frac{1}{\alpha_{k}} -\lambda_{k}} = \infty$. Then $\liminf_{k \to \infty} \norm{x_{k} -T_{k}x_{k}}=0$.
		\item  \label{theorem:KMBasic:lim} Suppose that $\sum_{k \in \mathbb{N}} \lambda_{k}  \lr{\frac{1}{\alpha_{k}} -\lambda_{k}} = \infty$ and that $\lim_{k \to \infty} \norm{x_{k} -T_{k}x_{k}}$ exists	$($e.g., $\lr{ \norm{x_{k} -T_{k}x_{k}} }_{k \in \mathbb{N}}$ is decreasing$)$. 
		Then  $	\lim_{k \to \infty} \norm{x_{k} -T_{k}x_{k}}=0$.
	
		\item  \label{theorem:KMBasic:l2} Suppose that $\liminf_{k \to \infty}  \lambda_{k}  \lr{\frac{1}{\alpha_{k}} -\lambda_{k}} >0$ $($e.g., $\liminf_{k \to \infty}  \lambda_{k} >0$ and $\limsup_{k \to \infty} \lambda_{k} < \frac{1}{\limsup_{k \to \infty} \alpha_{k}} <\infty$$)$. Then $\sum_{k \in \mathbb{N}}   \norm{x_{k} -T_{k}x_{k}}^{2} < \infty$. Consequently,  $\lim_{k \to \infty} \norm{x_{k} -T_{k}x_{k}}=0$.
		 
\end{enumerate}		
	\end{enumerate}
\end{theorem}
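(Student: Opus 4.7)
The main engine of the proof will be \cref{lemma:yxzx}, applied at every iteration with $T=T_{k}$, $\alpha=\alpha_{k}$, $\lambda=\lambda_{k}$, $\eta=\eta_{k}$, $e=e_{k}$, $x=x_{k}$, so that $z_{x}=x_{k+1}$. The plan is to combine the two estimates of \cref{lemma:yxzx} (a linear one from \cref{lemma:yxzx:lambdaalpha} and a squared Fej\'er-type one from \cref{lemma:yxzx:norm}) with \cref{fact:alphakINEQ}, and then read off the remaining items (c)--(e) as soft consequences of (b).

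For \cref{theorem:KMBasic:induction}, fix $\bar{x}\in \cap_{k}\Fix T_{k}$. Since $\bar{x}\in \Fix T_{k}$ for every $k$, \cref{lemma:yxzx:lambdaalpha} yields $\norm{x_{k+1}-\bar{x}}\le\norm{x_{k}-\bar{x}}+\eta_{k}\norm{e_{k}}$; iterating this from $k=0$ immediately produces the asserted bound. This also shows that under the extra assumption $\sum_{k}\eta_{k}\norm{e_{k}}<\infty$ the sequence $(x_{k})_{k\in\mathbb{N}}$ is bounded, with $M:=\norm{x_{0}-\bar{x}}+\sum_{k}\eta_{k}\norm{e_{k}}$ an upper bound for $\norm{x_{k}-\bar{x}}$, and hence, via $\norm{y_{x_{k}}-\bar{x}}\le\norm{x_{k}-\bar{x}}$ from \cref{lemma:yxzx:lambdaalpha}, also for $\norm{y_{x_{k}}-\bar{x}}$.

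For \cref{theorem:KMBasic:quasiFejer} and \cref{theorem:KMBasic:sum}, I will apply the squared estimate in \cref{lemma:yxzx:norm} to obtain
\[
\norm{x_{k+1}-\bar{x}}^{2}\le \norm{x_{k}-\bar{x}}^{2}-\lambda_{k}\lr{\tfrac{1}{\alpha_{k}}-\lambda_{k}}\norm{x_{k}-T_{k}x_{k}}^{2}+\varepsilon_{k},
\]
with $\varepsilon_{k}:=\eta_{k}\norm{e_{k}}\bigl(2M+\eta_{k}\norm{e_{k}}\bigr)$. Because $\sum_{k}\eta_{k}\norm{e_{k}}<\infty$ forces $\eta_{k}\norm{e_{k}}\to 0$, eventually $\eta_{k}\norm{e_{k}}\le 1$, whence $(\eta_{k}\norm{e_{k}})^{2}\le \eta_{k}\norm{e_{k}}$, so $\sum_{k}\varepsilon_{k}<\infty$. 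Quasi-Fej\'er monotonicity with respect to $\cap_{k}\Fix T_{k}$ then follows directly from \cref{defin:FMonotone}, and invoking \cref{fact:alphakINEQ} with $\gamma_{k}\equiv 0$, $\alpha_{k}\leftsquigarrow\norm{x_{k}-\bar{x}}^{2}$, and $\beta_{k}\leftsquigarrow \lambda_{k}(\tfrac{1}{\alpha_{k}}-\lambda_{k})\norm{x_{k}-T_{k}x_{k}}^{2}$ delivers the existence of $\lim_{k}\norm{x_{k}-\bar{x}}^{2}$ (not explicitly needed here) and the desired summability.

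Items \cref{theorem:KMBasic:liminf,theorem:KMBasic:lim,theorem:KMBasic:l2} are standard consequences of \cref{theorem:KMBasic:sum}. For \cref{theorem:KMBasic:liminf}, if $\liminf_{k}\norm{x_{k}-T_{k}x_{k}}=c>0$ then eventually $\norm{x_{k}-T_{k}x_{k}}^{2}\ge c^{2}/2$, contradicting the divergence of $\sum_{k}\lambda_{k}(\tfrac{1}{\alpha_{k}}-\lambda_{k})$ against the finite sum from \cref{theorem:KMBasic:sum}. Then \cref{theorem:KMBasic:lim} is immediate since a convergent sequence has $\lim=\liminf$. For \cref{theorem:KMBasic:l2}, $\liminf_{k}\lambda_{k}(\tfrac{1}{\alpha_{k}}-\lambda_{k})\ge\delta>0$ gives a tail bound $\lambda_{k}(\tfrac{1}{\alpha_{k}}-\lambda_{k})\ge\delta/2$, and the ``e.g.'' clause is obtained by noting that $\liminf\lambda_{k}>0$ together with $\limsup\lambda_{k}<1/\limsup\alpha_{k}$ forces $\liminf(\tfrac{1}{\alpha_{k}}-\lambda_{k})>0$. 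The only nontrivial point I foresee is the bookkeeping that makes $\varepsilon_{k}$ summable; beyond that, everything is a linear unwinding of \cref{lemma:yxzx} together with the workhorse \cref{fact:alphakINEQ}.
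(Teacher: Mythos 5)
Your proposal is correct and follows essentially the same route as the paper: apply \cref{lemma:yxzx}\cref{lemma:yxzx:norm}$\&$\cref{lemma:yxzx:lambdaalpha} at each iteration with $T=T_{k}$, $x=x_{k}$, $z_{x}=x_{k+1}$, deduce \cref{theorem:KMBasic:induction} by telescoping the linear estimate, bound the error terms $\varepsilon_{k}$ using boundedness of $(x_{k})_{k\in\mathbb{N}}$ so that \cref{fact:alphakINEQ} applied to the squared estimate yields \cref{theorem:KMBasic:quasiFejer} and \cref{theorem:KMBasic:sum}, with \cref{theorem:KMBasic:liminf}--\cref{theorem:KMBasic:l2} as routine consequences. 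The only cosmetic differences are that you obtain boundedness directly from the telescoped bound in \cref{theorem:KMBasic:induction} rather than via a second invocation of \cref{fact:alphakINEQ}, and you control $\sum_{k}\varepsilon_{k}$ via the eventual inequality $(\eta_{k}\norm{e_{k}})^{2}\leq\eta_{k}\norm{e_{k}}$ instead of the uniform bound $\sup_{k}\eta_{k}\norm{e_{k}}<\infty$; both are equally valid.
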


\begin{proof}
	Let $\bar{x} \in \cap_{k\in \mathbb{N}}\Fix T_{k} $. Set 
	\begin{align}  \label{eq:theorem:KMBasic:varepsilon}
	(\forall k \in \mathbb{N}) \quad y_{k} := (1-\lambda_{k})x_{k} +\lambda_{k} T_{k}x_{k} \quad \text{and} \quad  \varepsilon_{k}:=\eta_{k} \norm{e_{k}} \lr{2\norm{y_{k} -\bar{x}} +\eta_{k} \norm{e_{k}}}.
	\end{align}
	For every $k \in \mathbb{N}$, apply \cref{lemma:yxzx}\cref{lemma:yxzx:norm}$\&$\cref{lemma:yxzx:lambdaalpha} with $x=x_{k}$, $y_{x}=y_{k}$, $z_{x} =x_{k+1}$, $T=T_{k}$, $\alpha =\alpha_{k}$, $\lambda =\lambda_{k}$, $\eta =\eta_{k}$, and $e=e_{k}$ to derive that 
	\begin{subequations}
		\begin{align}
		&\norm{y_{k} -\bar{x}}^{2} \leq \norm{x_{k} -\bar{x}}^{2} -\lambda_{k} \lr{\frac{1}{\alpha_{k}} -\lambda_{k}} \norm{x_{k} -T_{k}x_{k}}^{2};\label{eq:theorem:KMBasic:yk}\\
		&\norm{x_{k+1} -\bar{x}}^{2} \leq \norm{x_{k} -\bar{x}}^{2} -\lambda_{k}  \lr{\frac{1}{\alpha_{k}} -\lambda_{k}} \norm{x_{k} -T_{k}x_{k}}^{2} +\varepsilon_{k};\label{eq:theorem:KMBasic:xk+1}\\
		&\norm{x_{k+1} -\bar{x}} \leq \norm{y_{k} -\bar{x}} +\eta_{k} \norm{e_{k} } \leq \norm{x_{k} -\bar{x}} +\eta_{k}  \norm{e_{k}}\label{eq:theorem:KMBasicxkyk}.
		\end{align}
	\end{subequations}
	
	\cref{theorem:KMBasic:induction}: This is clear from \cref{eq:theorem:KMBasicxkyk} by induction.
	
	\cref{theorem:KMBasic:sumek}: Bearing \cref{eq:theorem:KMBasicxkyk} and $\sum_{k \in \mathbb{N}}  \eta_{k}  \norm{ e_{k}} <\infty$ in mind and  applying  \cref{fact:alphakINEQ} with $(\forall k \in \mathbb{N})$
	$\alpha_{k} = \norm{x_{k} -\bar{x}} $, $\beta_{k} =\gamma_{k} \equiv 0$, and $\varepsilon_{k}=\eta_{k} \norm{e_{k} }$, 	
	we know that  $  \lim_{k \to \infty} \norm{x_{k} -\bar{x}} $ exists in $ \mathbb{R}_{+}$. This together with the assumption that $\sum_{k \in \mathbb{N}} \eta_{k}  \norm{ e_{k}} <\infty$ entails that 
	\begin{align} \label{eq:theorem:KMBasic:M1M2}
	M_{1}:= \sup_{k \in \mathbb{N}} \norm{x_{k} -\bar{x}}< \infty \quad \text{and} \quad M_{2} := \sup_{k \in \mathbb{N}} \eta_{k} \norm{  e_{k}} < \infty.
	\end{align}
	Hence, 
	\begin{align*}
	\sum_{k \in \mathbb{N}} \varepsilon_{k} \stackrel{\cref{eq:theorem:KMBasic:varepsilon}}{=} &  \sum_{k \in \mathbb{N}}   \eta_{k} \norm{e_{k}} \lr{2\norm{y_{k} -\bar{x}} +\eta_{k} \norm{e_{k}}} \\
	\stackrel{\cref{eq:theorem:KMBasic:yk}}{\leq} &  \sum_{k \in \mathbb{N}}  \eta_{k} \norm{e_{k}} \lr{2\norm{x_{k} -\bar{x}} +\eta_{k} \norm{e_{k}}} \\
	\stackrel{\cref{eq:theorem:KMBasic:M1M2}}{\leq} & \lr{2M_{1}+M_{2}} \sum_{k \in \mathbb{N}}   \eta_{k} \norm{e_{k}} < \infty,
	\end{align*}
	which, combining with \cref{eq:theorem:KMBasic:xk+1}, \cref{defin:FMonotone}\cref{defin:FMonotone:QuasiFejer}, and \cref{fact:alphakINEQ}, guarantees the  results in \cref{theorem:KMBasic:quasiFejer} and \cref{theorem:KMBasic:sum} hold.
	
	\cref{theorem:KMBasic:liminf} and \cref{theorem:KMBasic:l2} are immediate from  \cref{theorem:KMBasic:sum} above.
	Moreover,	\cref{theorem:KMBasic:lim} is clear from \cref{theorem:KMBasic:liminf} above.
	
Altogether, the proof is complete.
\end{proof}

\begin{corollary}\label{prop:KMBasic:imply}
	Let $(\alpha_{k})_{k\in \mathbb{N}}$ be in $\left]0,1\right]$, let $(\forall k \in \mathbb{N})$ $\lambda_{k} \in \left[ 0,\frac{1}{\alpha_{k}}  \right]$,  and let $(\forall k \in \mathbb{N})$  $T_{k} : \mathcal{H} \to \mathcal{H}$ be $\alpha_{k}$-averaged. Let $x_{0} $ be in $ \mathcal{H}$. Define  
	\begin{align*}   
	\lr{\forall k \in \mathbb{N}} \quad x_{k+1} =(1-\lambda_{k})x_{k} +\lambda_{k} T_{k}x_{k} +\eta_{k}e_{k}.
	\end{align*}  
	Suppose that $C:= \cap_{k\in \mathbb{N}}\Fix T_{k} \neq \varnothing$ and that $\sum_{k \in \mathbb{N}}  \eta_{k} \norm{  e_{k}} <\infty$.
	Then the following statements hold.
	\begin{enumerate}
		\item \label{prop:KMBasic:imply:normconverge}  For every $\bar{x} \in C$, $(\norm{x_{k} -\bar{x}})_{k \in \mathbb{N}}$ converges. 
		\item  \label{prop:KMBasic:imply:bounded} $(x_{k})_{k \in \mathbb{N}}$ is bounded.
		\item  \label{prop:KMBasic:imply:dist} $( \dist (x_{k}, C))_{k \in \mathbb{N}}$ converges. 
		\item  \label{prop:KMBasic:imply:weaklyconver}	$(x_{k})_{k \in \mathbb{N}}$ converges weakly to a point in $C$ if and only if $\Omega \lr{  (x_{k})_{k \in \mathbb{N}} } \subseteq C$.
	\end{enumerate} 
\end{corollary}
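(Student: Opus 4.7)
The plan is to read off all four conclusions from the quasi-Fej\'er monotonicity of $(x_{k})_{k \in \mathbb{N}}$ with respect to $C$ established in \cref{theorem:KMBasic}\cref{theorem:KMBasic:quasiFejer}, supplemented by the sharper one-step inequality
\begin{align*}
\norm{x_{k+1} - z} \leq \norm{x_{k} - z} + \eta_{k}\norm{e_{k}} \qquad (\forall z \in C)
\end{align*}
obtained from \cref{eq:theorem:KMBasicxkyk} in the proof of \cref{theorem:KMBasic}; crucially, the error term on the right does not depend on $z$.

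For \cref{prop:KMBasic:imply:normconverge}, I would apply \cref{fact:alphakINEQ} to $\alpha_{k} := \norm{x_{k} -\bar{x}}^{2}$ with $\beta_{k} = \gamma_{k} \equiv 0$ and $\varepsilon_{k}$ the quasi-Fej\'er error (summable by the standing hypothesis $\sum_{k} \eta_{k}\norm{e_{k}} < \infty$, as verified inside the proof of \cref{theorem:KMBasic}\cref{theorem:KMBasic:sumek}) to conclude that $\lim_{k \to \infty} \norm{x_{k} -\bar{x}}^{2}$ exists in $\mathbb{R}_{+}$; convergence of $\norm{x_{k} -\bar{x}}$ follows. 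Item \cref{prop:KMBasic:imply:bounded} is then immediate by choosing any $\bar{x} \in C$, since a convergent real sequence is bounded.

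For \cref{prop:KMBasic:imply:dist}, observe first that each $T_{k}$ is $\alpha_{k}$-averaged, hence nonexpansive, hence quasinonexpansive, so by \cref{fact:quasinonexCC} every $\Fix T_{k}$ is closed and convex; thus $C$ is closed and convex. Taking the infimum over $z \in C$ in the displayed one-step inequality above yields
\begin{align*}
\dist(x_{k+1}, C) \leq \dist(x_{k}, C) + \eta_{k}\norm{e_{k}},
\end{align*}
and a second application of \cref{fact:alphakINEQ} (with $\alpha_{k} = \dist(x_{k}, C)$, $\beta_{k} = \gamma_{k} \equiv 0$, $\varepsilon_{k} = \eta_{k}\norm{e_{k}}$) delivers the convergence of $(\dist(x_{k}, C))_{k \in \mathbb{N}}$.

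For \cref{prop:KMBasic:imply:weaklyconver}, the forward implication is trivial since a weakly convergent sequence has a unique weak sequential cluster point. For the converse, boundedness from \cref{prop:KMBasic:imply:bounded} ensures $\Omega\bigl((x_{k})_{k \in \mathbb{N}}\bigr) \neq \varnothing$. Given two weak cluster points $y_{1}, y_{2}$, both in $C$ by hypothesis, the existence of $\lim_{k \to \infty} \norm{x_{k} - y_{1}}^{2}$ and $\lim_{k \to \infty} \norm{x_{k} - y_{2}}^{2}$ from \cref{prop:KMBasic:imply:normconverge} forces the existence of $\lim_{k \to \infty} \innp{x_{k}, y_{2} - y_{1}}$ upon expanding $\norm{x_{k} - y_{j}}^{2}$ and subtracting. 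This common limit must simultaneously equal $\innp{y_{1}, y_{2} - y_{1}}$ and $\innp{y_{2}, y_{2} - y_{1}}$ by passing through subsequences weakly converging to $y_{1}$ and $y_{2}$; subtraction yields $\norm{y_{1} - y_{2}}^{2} = 0$. Thus $\Omega\bigl((x_{k})_{k \in \mathbb{N}}\bigr)$ is a singleton, and boundedness then upgrades this to weak convergence of the whole sequence to that unique cluster point, which lies in $C$. The only mildly delicate step is this Opial-type argument; the remaining three items are direct applications of \cref{fact:alphakINEQ} to quasi-Fej\'er inequalities already in hand.
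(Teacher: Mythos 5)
Your proposal is correct, and it rests on exactly the same foundation as the paper's proof --- the quasi-Fej\'er monotonicity of $(x_{k})_{k \in \mathbb{N}}$ with respect to $C$ from \cref{theorem:KMBasic}\cref{theorem:KMBasic:quasiFejer} --- but where the paper disposes of all four items in one line by citing \cite[Theorem~5.33]{BC2017}, you reprove the content of that theorem from scratch. Your unpacking is sound: the two applications of \cref{fact:alphakINEQ} give \cref{prop:KMBasic:imply:normconverge}--\cref{prop:KMBasic:imply:dist}, and the Opial-type two-cluster-point argument gives the nontrivial direction of \cref{prop:KMBasic:imply:weaklyconver} (the paper's reference handles this via \cite[Lemma~2.47]{BC2017}, which is the same argument). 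One genuinely useful refinement in your version: for \cref{prop:KMBasic:imply:dist} you work from the one-step inequality $\norm{x_{k+1}-z} \leq \norm{x_{k}-z} + \eta_{k}\norm{e_{k}}$ in \cref{eq:theorem:KMBasicxkyk}, whose error term is independent of $z \in C$, so the infimum over $z$ passes through cleanly; the paper's \cref{defin:FMonotone}\cref{defin:FMonotone:QuasiFejer} formally allows the summable error to depend on the reference point, and the uniform version is what the cited theorem actually needs. The trade-off is the usual one: the paper's proof is shorter and leans on the reference, while yours is self-contained at the cost of replicating a standard result.
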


\begin{proof}
	The required results follow directly from \cite[Theorem~5.33]{BC2017}  and  \cref{theorem:KMBasic}\cref{theorem:KMBasic:quasiFejer}.	
\end{proof}

\subsection{Convergence of inexact Krasnosel'ski\v{\i}-Mann iterations}
In \cref{proposition:KMIterations} below, we invoke \cref{corollary:stabilityxkconverge} and extend the weak convergence of  the Krasnosel'ski\v{\i}-Mann iterations from its  exact version to its inexact version. 
\begin{proposition} \label{proposition:KMIterations}
	Let $\alpha \in \left]0,1\right]$, let $T: \mathcal{H} \to \mathcal{H}$ be an $\alpha$-averaged operator such that $\Fix T \neq \varnothing$, and let $\lr{\lambda_{k}}_{k \in \mathbb{N}}$ be a sequence in $\left]0,\frac{1}{\alpha}\right]$ such that $\sum_{k \in \mathbb{N}} \lambda_{k} \lr{1 - \alpha \lambda_{k}} =\infty$. Let $(e_{k})_{k \in \mathbb{N}}$ be in $\mathcal{H}$ and let $(\eta_{k})_{k \in \mathbb{N}}$ be in $\mathbb{R}_{+}$ with $\sum_{k \in \mathbb{N}} \eta_{k} \norm{e_{k}} < \infty$. Let $x_{0} \in \mathcal{H}$. Set
	\begin{align} \label{eq:proposition:KMIterations:xk}
	(\forall k \in \mathbb{N}) \quad x_{k+1} = x_{k} + \lambda_{k} \lr{Tx_{k} -x_{k}} +\eta_{k} e_{k}.
	\end{align}
	Then $(x_{k})_{k \in \mathbb{N}}$ converges weakly to a point in $\Fix T$.
\end{proposition}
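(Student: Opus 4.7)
The plan is to view the inexact iteration as a perturbed/approximate method and apply \cref{corollary:stabilityxkconverge}. For each $k \in \mathbb{N}$, define the nonexpansive operator
\begin{align*}
G_{k} := (1-\lambda_{k}) \Id + \lambda_{k} T.
\end{align*}
Since $T$ is $\alpha$-averaged and $\lambda_{k} \in \left]0, \frac{1}{\alpha}\right]$, \cref{fact:Averagedlambdaalpha} yields that $G_{k}$ is $\lambda_{k}\alpha$-averaged, hence in particular nonexpansive. Moreover, because $\lambda_{k} \neq 0$, a direct computation (or the equivalence in the proof of \cref{lemma:yxzx}\cref{lemma:yxzx:R}) shows that $\Fix G_{k} = \Fix T$, so $\cap_{k \in \mathbb{N}} \Fix G_{k} = \Fix T \neq \varnothing$. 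Recasting \cref{eq:proposition:KMIterations:xk}, we obtain the form
\begin{align*}
(\forall k \in \mathbb{N}) \quad x_{k+1} = G_{k} x_{k} + \eta_{k} e_{k},
\end{align*}
with the norm-summability hypothesis $\sum_{k \in \mathbb{N}} \norm{\eta_{k} e_{k}} = \sum_{k \in \mathbb{N}} \eta_{k} \norm{e_{k}} < \infty$ matching the input of \cref{corollary:stabilityxkconverge}.

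Next I would analyze the translated basic method. For each $i \in \mathbb{N}$, set $\xi_{0}(i) := x_{i}$ and
\begin{align*}
(\forall k \in \mathbb{N}) \quad \xi_{k+1}(i) = G_{k+i} \xi_{k}(i) = \xi_{k}(i) + \lambda_{k+i} \lr{ T \xi_{k}(i) - \xi_{k}(i) }.
\end{align*}
This is exactly the exact Krasnosel'ski\v{\i}--Mann iteration driven by $T$ with the tail sequence of parameters $(\lambda_{k+i})_{k \in \mathbb{N}} \subseteq \left]0, \frac{1}{\alpha}\right]$. Since removing finitely many terms preserves divergence, $\sum_{k \in \mathbb{N}} \lambda_{k+i}(1 - \alpha \lambda_{k+i}) = \infty$. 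Thus \cref{fact:KMIterations} applies and yields a point $\xi(i) \in \Fix T$ with $\xi_{k}(i) \weakly \xi(i)$ as $k \to \infty$.

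Finally, take $C := \Fix T$; by the nonexpansiveness of $T$ and \cref{fact:quasinonexCC}, $C$ is nonempty closed (and convex). All hypotheses of \cref{corollary:stabilityxkconverge} are satisfied with the weak-convergence option, so $(x_{k})_{k \in \mathbb{N}}$ converges weakly to a point in $\Fix T$, as required.

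There is no real obstacle beyond bookkeeping: the non-stationarity of the $G_{k}$'s is automatically handled because $\Fix G_{k} = \Fix T$ is independent of $k$, and the translated method at every shift $i$ is itself a KM iteration covered by \cref{fact:KMIterations}. The only points worth double-checking are that the parameter sequences $(\lambda_{k+i})_{k \in \mathbb{N}}$ still satisfy the divergence assumption (trivial) and that $\Fix T$ is closed (immediate from \cref{fact:quasinonexCC}).
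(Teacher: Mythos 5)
Your proposal is correct and follows essentially the same route as the paper's own proof: both define $G_{k}:=(1-\lambda_{k})\Id+\lambda_{k}T$, verify nonexpansiveness via \cref{fact:Averagedlambdaalpha} and $\Fix G_{k}=\Fix T$ (closed by \cref{fact:quasinonexCC}), apply \cref{fact:KMIterations} to each translated method with the shifted parameters $(\lambda_{k+i})_{k\in\mathbb{N}}$, and conclude with \cref{corollary:stabilityxkconverge}. No gaps.
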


\begin{proof}
	Define 
	\begin{align*}
	(\forall k \in \mathbb{N}) \quad G_{k}:= \lr{1-\lambda_{k}} \Id +\lambda_{k} T.
	\end{align*}
	Then \cref{eq:proposition:KMIterations:xk} becomes
	\begin{align} \label{eq:proposition:KMIterations:xkG}
	(\forall k \in \mathbb{N}) \quad x_{k+1}=G_{k}x_{k}+\eta_{k} e_{k}. 
	\end{align}   
	Because   $T :\mathcal{H} \to \mathcal{H}$ is $\alpha $-averaged  and $(\forall k \in \mathbb{N})$ $\lambda_{k} \in \left]0,\frac{1}{\alpha }\right]$, we know, by \cref{fact:Averagedlambdaalpha},  that $(\forall k \in \mathbb{N})$ $G_{k}$ is $\lambda_{k} \alpha$-averaged, which implies that $(\forall k \in \mathbb{N})$ $G_{k}$ is nonexpansive. 
	Because  $(\forall k \in \mathbb{N})$ $\lambda_{k} \neq 0$, it is easy to see that $(\forall k \in \mathbb{N})$ $\Fix G_{k} = \Fix T$,
	which, combined with \cref{fact:quasinonexCC},  entails that $\cap_{k \in \mathbb{N}} \Fix G_{k} =\Fix T $ is nonempty and closed.
	
	Notice that 
	\begin{align*}
	(\forall i \in \mathbb{N}) \quad  \sum_{k \in \mathbb{N}} \lambda_{k} \lr{1 - \alpha \lambda_{k}} =\infty \Leftrightarrow  \sum_{k \in \mathbb{N}} \lambda_{k+i} \lr{1 - \alpha \lambda_{k+i}} =\infty.
	\end{align*}
	Set 
	\begin{align}\label{eq:proposition:KMIterations:xkxi}
 (\forall i \in \mathbb{N})	(\forall k \in \mathbb{N})  \quad \xi_{k+1}(i)=G_{k+i}\xi_{k}(i) \text{ and } \xi_{0}(i)=x_{i}.  
	\end{align}
	Hence, for every $i \in \mathbb{N}$, applying  \cref{fact:KMIterations} with $(\forall k \in \mathbb{N})$ $x_{k}=\xi_{k}(i)$ and $\lambda_{k} =\lambda_{k+i}$,
	we have that  
	\begin{align} \label{eq:proposition:KMIterations:weakly}
	(\forall i \in \mathbb{N})  \quad  \xi_{k}(i) \weakly \xi(i) \in \Fix T.
	\end{align}

	Therefore, bearing \cref{eq:proposition:KMIterations:xkG}, \cref{eq:proposition:KMIterations:xkxi}, and \cref{eq:proposition:KMIterations:weakly}  in mind and applying
	\cref{corollary:stabilityxkconverge} with $C=\Fix T$ and  $(\forall k \in \mathbb{N})$ $e_{k} =\eta_{k} e_{k}$, we obtain the required weak convergence of the sequence $(x_{k})_{k \in \mathbb{N}}$.
\end{proof}

\begin{remark}
	\cref{proposition:KMIterations} reduces to \cite[Proposition~5.34(iii)]{BC2017} if $\alpha=1$ and $(\forall k \in \mathbb{N})$ $\eta_{k} \equiv \lambda_{k}$ although the proof of 	\cref{proposition:KMIterations}  has nothing to do with that of  \cite[Proposition~5.34]{BC2017}. 
\end{remark}

\subsection{Convergence of inexact non-stationary  Krasnosel'ski\v{\i}-Mann iterations} \label{section:NonstationaryKMIConverge}

In this subsection, we show the weak and strong convergence of the inexact version of the non-stationary Krasnosel'ski\v{\i}-Mann iterations.

In the following result, we deduce the weak convergence of the inexact   non-stationary Krasnosel'ski\v{\i}-Mann iterations from the weak convergence of the exact   Krasnosel'ski\v{\i}-Mann iterations.
\begin{theorem} \label{theorem:TkxikConver}
	Let $\alpha \in \left]0,1\right]$ and $(\forall k \in \mathbb{N})$ $\lambda_{k} \in \left]0,\frac{1}{\alpha} \right]$, let $T: \mathcal{H} \to \mathcal{H}$ be $\alpha$-averaged with $\Fix T \neq \varnothing$, and let $(\forall k \in \mathbb{N})$ $T_{k} : \mathcal{H} \to \mathcal{H}$.
	Let $(e_{k})_{k \in \mathbb{N}}$ be a sequence in $\mathcal{H}$ and  let $(\eta_{k})_{k \in \mathbb{N}}$ be in $\mathbb{R}_{+}$ such that $\sum_{k \in \mathbb{N}} \eta_{k}\norm{e_{k}} < \infty$. Let $x_{0}$ be in $\mathcal{H}$. Define
	\begin{align}  \label{eq:theorem:TkxikConver:xk}
	(\forall k \in \mathbb{N}) \quad x_{k +1 }=  (1-\lambda_{k}) x_{k} + \lambda_{k}T_{k}   x_{k}+ \eta_{k} e_{k}.
	\end{align}

	Suppose that $\sum_{k \in \mathbb{N}} \lambda_{k} \norm{T_{k}x_{k}-Tx_{k}} <\infty$.  Then the following statements hold.
	
	\begin{enumerate}
		\item \label{theorem:TkxikConver:general}  Define 
		\begin{align} \label{eq:theorem:TkxikConver:xik}
		(\forall i \in \mathbb{N}) (\forall k \in \mathbb{N}) \quad	 \xi_{k+1}(i)=(1-\lambda_{k+i})  \xi_{k}(i)  + \lambda_{k+i}T  \xi_{k}(i) \text{ and } \xi_{0}(i)=x_{i}.   
		\end{align}
		Suppose that for every $ i \in \mathbb{N}$,
		$(\xi_{k}(i))_{k \in \mathbb{N}}$ weakly   converges to a point $\xi(i) \in \mathcal{H}$. Then the following hold. 
		\begin{enumerate}
			\item  \label{theorem:TkxikConver:general:xi}  There exists a point $\bar{\xi} \in \mathcal{H}$ such that $(\xi(i))_{i \in \mathbb{N}}$ strongly converges to $\bar{\xi}$.
			\item \label{theorem:TkxikConver:general:weak}  $(x_{k})_{k\in \mathbb{N}}$ weakly  converges to $\bar{\xi} = \lim_{i \to \infty} \xi(i)$. 
			\item \label{theorem:TkxikConver:general:strong}  Suppose further that $(\forall i \in \mathbb{N})$ $(\xi_{k}(i))_{k \in \mathbb{N}}$   converges strongly to a point $\xi(i) \in \mathcal{H}$. Then  $(x_{k})_{k\in \mathbb{N}}$   converges strongly to $\bar{\xi} = \lim_{i \to \infty} \xi(i)$. 
		\end{enumerate}
		\item \label{theorem:TkxikConver:Fk} Suppose that $\sum_{k \in \mathbb{N}} \lambda_{k}  \lr{\frac{1}{\alpha} -\lambda_{k}} = \infty$. 
		Then  $(x_{k})_{k\in \mathbb{N}}$ weakly    converges to a point $\bar{x}$ in $\Fix T$.
	\end{enumerate}
\end{theorem}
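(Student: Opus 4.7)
My plan is to recognize both parts as direct applications of \cref{theorem:GkWeakStrongConvergence}\cref{theorem:GkWeakStrongConvergence:xkconverges} with a careful choice of basic and perturbed iteration mappings. Define
\begin{align*}
(\forall k \in \mathbb{N}) \quad G_{k} := (1-\lambda_{k})\Id + \lambda_{k} T \quad \text{and} \quad F_{k} := (1-\lambda_{k})\Id + \lambda_{k} T_{k},
\end{align*}
so that \cref{eq:theorem:TkxikConver:xk} reads $x_{k+1} = F_{k} x_{k} + \eta_{k} e_{k}$ and \cref{eq:theorem:TkxikConver:xik} reads $\xi_{k+1}(i) = G_{k+i}\xi_{k}(i)$ with $\xi_{0}(i) = x_{i}$. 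Since $T$ is $\alpha$-averaged and $\lambda_{k} \in \left]0,\tfrac{1}{\alpha}\right]$, \cref{fact:Averagedlambdaalpha} shows that each $G_{k}$ is $\lambda_{k}\alpha$-averaged, hence nonexpansive; moreover $\lambda_{k} \neq 0$ yields $\Fix G_{k} = \Fix T$ for all $k$.

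For part \cref{theorem:TkxikConver:general}, the crucial observation is that
\begin{align*}
\sum_{k \in \mathbb{N}} \norm{F_{k}x_{k} - G_{k}x_{k}} = \sum_{k \in \mathbb{N}} \lambda_{k}\norm{T_{k}x_{k} - Tx_{k}} < \infty
\end{align*}
by hypothesis, and $\sum_{k\in\mathbb{N}} \norm{\eta_{k}e_{k}} < \infty$ by assumption. Together with the stated weak convergence of $(\xi_{k}(i))_{k\in\mathbb{N}}$ for each $i$, these are exactly the hypotheses of \cref{theorem:GkWeakStrongConvergence}\cref{theorem:GkWeakStrongConvergence:xkconverges}, whose three conclusions deliver \cref{theorem:TkxikConver:general:xi}, \cref{theorem:TkxikConver:general:weak}, and \cref{theorem:TkxikConver:general:strong} verbatim.

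For part \cref{theorem:TkxikConver:Fk}, I only need to verify that the translated basic method $(\xi_{k}(i))_{k \in \mathbb{N}}$ converges weakly to a point in $\Fix T$ for each $i$, and then invoke part \cref{theorem:TkxikConver:general}. Fix $i \in \mathbb{N}$. The recursion \cref{eq:theorem:TkxikConver:xik} is precisely the classical Krasnosel'ski\v{\i}-Mann iteration for the $\alpha$-averaged operator $T$ with starting point $x_{i}$ and relaxation parameters $(\lambda_{k+i})_{k \in \mathbb{N}} \subseteq \left[0, \tfrac{1}{\alpha}\right]$. Multiplying the assumption $\sum_{k} \lambda_{k}\lr{\tfrac{1}{\alpha} - \lambda_{k}} = \infty$ by $\alpha$ and discarding the first $i$ (non-negative) terms gives $\sum_{k} \lambda_{k+i}(1 - \alpha\lambda_{k+i}) = \infty$, so \cref{fact:KMIterations} applies and yields $\xi_{k}(i) \weakly \xi(i) \in \Fix T$. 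Therefore part \cref{theorem:TkxikConver:general:weak} shows $x_{k} \weakly \bar{\xi} = \lim_{i \to \infty} \xi(i)$; since $T$ is nonexpansive hence quasinonexpansive, \cref{fact:quasinonexCC} guarantees $\Fix T$ is closed, and the strong limit of the $\xi(i) \in \Fix T$ lies in $\Fix T$, so $\bar{\xi} \in \Fix T$.

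There is no significant obstacle: the architecture erected in \cref{section:PerturbedApproximate} was designed precisely to make this kind of reduction routine. The only point requiring care is the translation of the divergence condition on $(\lambda_{k})_{k \in \mathbb{N}}$ to the shifted sequences $(\lambda_{k+i})_{k \in \mathbb{N}}$, and that the averaged constants match so that \cref{fact:KMIterations} applies uniformly in $i$ — both handled by the observations above.
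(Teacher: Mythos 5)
Your proposal is correct and follows essentially the same route as the paper: the same decomposition into $F_{k}=(1-\lambda_{k})\Id+\lambda_{k}T_{k}$ and $G_{k}=(1-\lambda_{k})\Id+\lambda_{k}T$, the same reduction of part (i) to \cref{theorem:GkWeakStrongConvergence}\cref{theorem:GkWeakStrongConvergence:xkconverges}, and the same use of \cref{fact:KMIterations} on the shifted parameters together with the closedness of $\Fix T$ from \cref{fact:quasinonexCC} for part (ii). Your explicit remark that multiplying $\sum_{k}\lambda_{k}\lr{\tfrac{1}{\alpha}-\lambda_{k}}=\infty$ by $\alpha$ yields the form $\sum_{k}\lambda_{k}(1-\alpha\lambda_{k})=\infty$ required by \cref{fact:KMIterations} is a small bridging detail the paper leaves implicit, but both arguments are the same in substance.
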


\begin{proof}
		\cref{theorem:TkxikConver:general}: Define 
		\begin{align}  \label{eq:theorem:TkxikConver}
		(\forall k \in \mathbb{N}) \quad   F_{k}:= (1-\lambda_{k}) \Id + \lambda_{k}T_{k} \text{ and }  G_{k}:= (1-\lambda_{k}) \Id + \lambda_{k}T.
		\end{align}  
		
		Inasmuch as $T: \mathcal{H} \to \mathcal{H}$ is $\alpha$-averaged and $(\forall k \in \mathbb{N})$ $\lambda_{k} \in \left]0,\frac{1}{\alpha} \right]$, via \cref{fact:Averagedlambdaalpha},  we  know that 	 $(\forall k \in \mathbb{N})$ $G_{k}$ is nonexpansive. 
	According to the construction of the operators $(G_{k})_{k \in \mathcal{H}}$ and $(F_{k})_{k \in \mathcal{H}}$ in \cref{eq:theorem:TkxikConver},  we observe that  \cref{eq:theorem:TkxikConver:xk}  and \cref{eq:theorem:TkxikConver:xik} become, respectively, 
	\begin{align*}
	&(\forall k \in \mathbb{N}) \quad x_{k+1}=F_{k}x_{k}+\eta_{k}e_{k};\\
	& (\forall i \in \mathbb{N}) (\forall k \in \mathbb{N}) \quad \xi_{k+1}(i)=G_{k+i}\xi_{k}(i) \text{ and } \xi_{0}(i)=x_{i}.    
	\end{align*}
	Moreover, applying  \cref{eq:theorem:TkxikConver} again, we see that
	\begin{align*}
	\sum_{k \in \mathbb{N}} \norm{F_{k}x_{k} -G_{k}x_{k}} <\infty \Leftrightarrow 	\sum_{k \in \mathbb{N}} \lambda_{k} \norm{T_{k}x_{k}-Tx_{k}} <\infty.
	\end{align*}
	Hence,  \cref{theorem:TkxikConver:general}  is clear from    \cref{theorem:GkWeakStrongConvergence}\cref{theorem:GkWeakStrongConvergence:xkconverges}.
	
	\cref{theorem:TkxikConver:Fk}: Note that  
	\begin{align*}
(\forall i \in \mathbb{N}) \quad 	\sum_{k \in \mathbb{N}} \lambda_{k}  \lr{\frac{1}{\alpha} -\lambda_{k}} = \infty  \Leftrightarrow 	\sum_{k \in \mathbb{N}} \lambda_{k+i}  \lr{\frac{1}{\alpha} -\lambda_{k+i}} = \infty.
	\end{align*}
	 For every $i \in \mathbb{N}$, applying   \cref{fact:KMIterations} with $(\forall k \in \mathbb{N})$ $x_{k}=\xi_{k}(i)$ and $\lambda_{k} =\lambda_{k+i}$,
	we establish that   $(\xi_{k}(i))_{k \in \mathbb{N}}$ weakly   converges to a point $\xi(i) \in \Fix T$. 
	
Furthermore, bearing \cref{fact:quasinonexCC} and \cref{theorem:TkxikConver:general:xi} in mind, we know that the limit $\bar{\xi}$ of $(\xi(i))_{i \in \mathbb{N}}$ must be in $\Fix T$ as well. Therefore, we obtain the required weak convergence of  $(x_{k})_{k \in \mathbb{N}}$ by \cref{theorem:TkxikConver:general:weak} above.
\end{proof}

\begin{remark} \label{remark:TkxikConver:Fk}
	\cref{theorem:TkxikConver}\cref{theorem:TkxikConver:Fk} 
	generalizes \cite[Theorem~4]{LiangFadiliPeyre2016} from the following aspects.
	\begin{itemize}
		\item The nonexpansive operator $T$ in \cite[Theorem~4]{LiangFadiliPeyre2016} is extended to   the $\alpha$-averaged operator $T$ with $\alpha \in \left]0,1\right]$ in 	\cref{theorem:TkxikConver}\cref{theorem:TkxikConver:Fk}  (notice that $1$-averaged operator is nonexpansive).
		\item The assumption $\inf_{k \in \mathbb{N}} \lambda_{k} \lr{1-\lambda_{k}} > 0 $ in \cite[Theorem~4]{LiangFadiliPeyre2016} is replaced by a more general assumption $\sum_{k \in \mathbb{N}} \lambda_{k}  \lr{\frac{1}{\alpha} -\lambda_{k}} = \infty$.
		\item The corresponding assumptions that  
		\begin{align*}
		(\forall \rho \in \mathbb{R}_{+}) \quad \sum_{k \in \mathbb{N}} \sup_{\norm{x} \leq \rho} \lambda_{k} \norm{T_{k}x-Tx} < \infty
		\end{align*}
		and that    $(\forall k \in \mathbb{N})$  $F_{k}$ is $(1+\gamma_{k})$-Lipschitz continuous, where $( \gamma_{k} )_{k \in \mathbb{N}} $ is in $\mathbb{R}_{+}$ such that $\sum_{k \in \mathbb{N}} \gamma_{k} < \infty$,
		  in \cite[Theorem~4]{LiangFadiliPeyre2016}  are simplified as   $	\sum_{k \in \mathbb{N}} \lambda_{k+1} \norm{T_{k+1}x_{k}-Tx_{k}} <\infty$
		in 	\cref{theorem:TkxikConver}\cref{theorem:TkxikConver:Fk}. 
	\end{itemize}
\end{remark}

To show a strong convergence of the inexact   non-stationary Krasnosel'ski\v{\i}-Mann iterations in \cref{proposition:KMI} below, we first deduce the following result on the exact non-stationary Krasnosel'ski\v{\i}-Mann iterations. 
\begin{theorem} \label{theorem:KMI}
	Let $(\forall k \in \mathbb{N})$ $T_{k} :\mathcal{H} \to \mathcal{H}$ be $\alpha_{k}$-averaged with $\alpha_{k} \in \left]0,1\right]$ and $C := \cap_{k \in \mathbb{N}} \Fix T_{k}  \neq \varnothing$.  Let $\bar{x} \in \cap_{k\in \mathbb{N}} \Fix T_{k}$, let $(\forall k \in \mathbb{N})$ $\lambda_{k} \in \left]0,\frac{1}{\alpha_{k}}\right[\,$, and let $x_{0}$ be in $\mathcal{H}$. Define
	\begin{align*}
	\lr{\forall k \in \mathbb{N}} \quad x_{k+1} = \lr{1-\lambda_{k}}x_{k} +\lambda_{k} T_{k}x_{k}.
	\end{align*}
	Then the following statements hold. 
	\begin{enumerate}
		\item  \label{theorem:KMI:Fejer} $\lr{x_{k}}_{k \in \mathbb{N}}$ is Fej\'er monotone with respect to $C$.
		\item  \label{theorem:KMI:MSubR} Suppose that  $(\forall k \in \mathbb{N})$  $\Id - T_{k}$ is metrically subregular at $\bar{x}$ for $0 = \lr{\Id -T_{k}}\bar{x}$, i.e., for every $k \in \mathbb{N}$,
		\begin{align*}
		\lr{\exists \gamma_{k} >0} \lr{\exists \delta_{k} >0} \lr{\forall x \in B[\bar{x}; \delta_{k}]} \quad \dist \lr{x, \lr{\Id - T_{k}}^{-1} 0 } \leq \gamma_{k} \norm{x -T_{k}x}.
		\end{align*}
		Suppose that $\delta: =\inf_{k \in \mathbb{N}} \delta_{k} >0$ and    $x_{0} \in B[\bar{x};\delta]$.  Define $(\forall k \in \mathbb{N})$ $\rho_{k}:=\lr{  1- \frac{\lambda_{k} \lr{1-\lambda_{k} \alpha_{k}}}{ \alpha_{k}\gamma_{k}^{2}} }^{\frac{1}{2}}$. Then the following hold. 
		\begin{enumerate}
			\item \label{theorem:KMI:Distance} $\lr{\forall k \in \mathbb{N}}$ $x_{k} \in B[\bar{x};\delta]$ and $  \dist  \lr{x_{k+1}, \Fix T_{k}} \leq \rho_{k} \dist  \lr{x_{k}, \Fix T_{k} }$.
			\item \label{theorem:KMI:MSubR:moreassum} Suppose that $\lr{\forall k \in \mathbb{N}}$ $\Fix T_{k} =C$ and that  $0< \underline{\lambda}:=\liminf_{k \to \infty} \lambda_{k} \leq \overline{\lambda}:=\limsup_{k \to \infty} \lambda_{k} < \frac{1}{\alpha}$ where $\alpha:=\limsup_{k \to \infty} \alpha_{k} >0 $ and $\gamma :=\limsup_{k \to \infty}\gamma_{k}  < \infty$.
			Define $\rho:=\limsup_{k \to \infty} \rho_{k}$.
			Then the following hold. 
			\begin{enumerate}
				\item \label{theorem:KMI:MSubR:rho} $0\leq \rho \leq \lr{1 - \frac{\underline{\lambda} \lr{ \frac{1}{\alpha}-\overline{\lambda}}}{ \gamma^{2}}}^{\frac{1}{2}} <  \frac{1}{ \lr{  1 + \frac{\underline{\lambda} \lr{ \frac{1}{\alpha}-\overline{\lambda}}}{ \gamma^{2}} }^{\frac{1}{2}}  }  <1$.
		
			\item \label{theorem:KMI:MSubR:moreassum:dist} There exist  $K \in \mathbb{N}$ and $\mu \in \left]\rho, 1\right[$ such that for every $k \geq K$,
		$  \dist  \lr{x_{k+1}, C}  \leq \mu \dist  \lr{x_{k}, C}$.
		\item \label{theorem:KMI:Lindear} There exist $\hat{x} \in C$, $K \in \mathbb{N}$, and $\mu \in \left]\rho, 1\right[$  such that for every  $k \geq K$,
		\begin{align} \label{eq:theorem:KMI:EQ}
		(\forall k \geq K) \quad \norm{x_{k} -\hat{x}} \leq 2 \mu^{k-K} \dist \lr{x_{K}, C}.
		\end{align}
		Consequently, $\lr{x_{k}}_{k \in \mathbb{N}}$ converges $R$-linearly to a point $\hat{x} \in C$.
		In particular, if $C$ is affine, then $\hat{x} =\Pro_{C}x_{0}$.		
			\end{enumerate}
			
		\end{enumerate} 
	\end{enumerate} 
\end{theorem}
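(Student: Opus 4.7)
For part \cref{theorem:KMI:Fejer}, since $C=\cap_{k\in\mathbb{N}}\Fix T_k$, every $p\in C$ is a fixed point of each $T_k$, and $\lambda_k\in\left]0,\frac{1}{\alpha_k}\right[\,$. The plan is to invoke \cref{lemma:yxzx}\cref{lemma:yxzx:lambdaalpha} with $T=T_k$, $\alpha=\alpha_k$, $\lambda=\lambda_k$, $\eta=0$, $\bar x=p$, which directly yields $\norm{x_{k+1}-p}\leq\norm{x_k-p}$, i.e., Fej\'er monotonicity with respect to $C$.

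For \cref{theorem:KMI:Distance}, I would argue by induction on $k$. The base case $x_0\in B[\bar x;\delta]$ is the hypothesis. For the inductive step, assume $x_k\in B[\bar x;\delta]$. Since $\delta\leq\delta_k$, the metric subregularity of $\Id-T_k$ at $\bar x$ applies at $x_k$, so \cref{proposition:TFMetricSubreg}\cref{proposition:TFMetricSubreg:MSubR} (applied to $T=T_k$, $\alpha=\alpha_k$, $\lambda=\lambda_k$, $\kappa=\gamma_k$) gives $\dist(x_{k+1},\Fix T_k)\leq\rho_k\dist(x_k,\Fix T_k)$. The fact that $x_{k+1}\in B[\bar x;\delta]$ then follows from the Fej\'er inequality $\norm{x_{k+1}-\bar x}\leq\norm{x_k-\bar x}\leq\delta$ proved in \cref{theorem:KMI:Fejer}, closing the induction.

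For \cref{theorem:KMI:MSubR:rho}, I would rewrite $\rho_k^2=1-\frac{\lambda_k(1/\alpha_k-\lambda_k)}{\gamma_k^2}$ and use the standard inequalities $\liminf(X_k Y_k)\geq(\liminf X_k)(\liminf Y_k)$ and $\liminf(X_k/Y_k)\geq(\liminf X_k)/(\limsup Y_k)$ for nonnegative sequences, together with $\liminf 1/\alpha_k=1/\limsup\alpha_k=1/\alpha$, to obtain $\liminf_k\frac{\lambda_k(1/\alpha_k-\lambda_k)}{\gamma_k^2}\geq\frac{\underline\lambda(1/\alpha-\overline\lambda)}{\gamma^2}=:t>0$. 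Hence $\rho=\limsup\rho_k\leq\sqrt{1-t}$, and the algebraic identity $(1-t)(1+t)=1-t^2<1$ forces $\sqrt{1-t}<1/\sqrt{1+t}<1$. For \cref{theorem:KMI:MSubR:moreassum:dist}, the assumption $\Fix T_k=C$ reduces \cref{theorem:KMI:Distance} to $\dist(x_{k+1},C)\leq\rho_k\dist(x_k,C)$, so any $\mu\in\left]\rho,1\right[$ is eventually an upper bound for $\rho_k$ by definition of $\limsup$.

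For \cref{theorem:KMI:Lindear}, iterating \cref{theorem:KMI:MSubR:moreassum:dist} gives $\dist(x_k,C)\leq\mu^{k-K}\dist(x_K,C)$ for $k\geq K$. Using the Fej\'er monotonicity from \cref{theorem:KMI:Fejer} together with $\Pro_C x_k\in C$, I get the standard estimate $\norm{x_m-x_k}\leq\norm{x_m-\Pro_C x_k}+\norm{x_k-\Pro_C x_k}\leq 2\dist(x_k,C)$ for $m\geq k\geq K$, which shows $(x_k)_{k\in\mathbb{N}}$ is Cauchy; call its limit $\hat x$, and note $\hat x\in C$ since $C$ is closed and $\dist(x_k,C)\to 0$. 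Letting $m\to\infty$ in the above estimate yields \cref{eq:theorem:KMI:EQ} and $R$-linear convergence. The main obstacle is the final claim that $\hat x=\Pro_C x_0$ when $C$ is affine: the key observation is that for any $\alpha$-averaged operator $T$ with $\Fix T\supseteq C$, subtracting the inequalities of \cref{lemma:yxzx}\cref{lemma:yxzx:norm} (or of \cite[Proposition~4.35]{BC2017}) applied at two points $p,q\in\Fix T$ yields $\innp{Tx-x,p-q}=0$, so $T_k x_k-x_k\perp C-C$; hence $x_{k+1}-x_k\perp C-C$, which, since $C$ is affine, implies $\Pro_C x_{k+1}=\Pro_C x_k=\Pro_C x_0$ for all $k$. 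Continuity of $\Pro_C$ and $\hat x\in C$ then give $\hat x=\Pro_C \hat x=\lim_k\Pro_C x_k=\Pro_C x_0$.
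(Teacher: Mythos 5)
Your proof follows essentially the same route as the paper's: Fej\'er monotonicity via \cref{lemma:yxzx}\cref{lemma:yxzx:lambdaalpha} (the paper routes this through \cref{theorem:KMBasic}\cref{theorem:KMBasic:induction}, which is itself proved that way), induction on $x_k\in B[\bar x;\delta]$ combined with \cref{proposition:TFMetricSubreg}\cref{proposition:TFMetricSubreg:B} for \cref{theorem:KMI:Distance}, a $\liminf$/$\limsup$ computation for \cref{theorem:KMI:MSubR:rho}, and the standard ``Fej\'er monotone plus linearly vanishing distance'' argument for \cref{theorem:KMI:Lindear} --- the paper cites \cite[Theorem~5.12]{BC2017} where you reprove the $\norm{x_m-x_k}\leq 2\dist(x_k,C)$ estimate by hand; both are fine.

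The one step that does not hold as written is your justification of the final claim $\hat x=\Pro_C x_0$ for affine $C$. You cannot obtain $\innp{T_kx_k-x_k,p-q}=0$ by ``subtracting the inequalities of \cref{lemma:yxzx}\cref{lemma:yxzx:norm} applied at two points $p,q$'': both inequalities say that the affine function $\bar x\mapsto \norm{x_k-\bar x}^2-\norm{x_{k+1}-\bar x}^2=-2\innp{x_k-x_{k+1},\bar x}+c_k$ is bounded below by the same nonnegative quantity, and the difference of two numbers sharing a common lower bound need not be zero. The affineness of $C$ must enter at precisely this point: evaluate the inequality at $(1-s)p+sq\in C$ for every $s\in\mathbb{R}$ and let $s\to\pm\infty$ to force the coefficient $\innp{x_{k+1}-x_k,p-q}$ to vanish (equivalently, an affine function bounded below on an affine set is constant along it). Note that only Fej\'er monotonicity is needed here, not the averagedness inequality; this is exactly \cite[Proposition~5.9]{BC2017}, which the paper simply cites. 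With that repair your argument is complete.
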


\begin{proof}
	\cref{theorem:KMI:Fejer}: According to \cref{defin:FMonotone}\cref{defin:FMonotone:Fejer},  applying \cref{theorem:KMBasic}\cref{theorem:KMBasic:induction} with $(\forall k \in \mathbb{N})$ $e_{k} \equiv 0$ and $\eta_{k}   \equiv 0$, we obtain the required Fej\'er monotonicity.

	\cref{theorem:KMI:Distance}: Using \cref{theorem:KMI:Fejer} above and 
	invoking the assumption, $x_{0} \in B[\bar{x};\delta]$, we establish by induction  that
	\begin{align*}
	(\forall k \in \mathbb{N}) \quad \norm{x_{k+1} -\bar{x}} \leq \norm{x_{k}-\bar{x}} \leq \cdots \leq \norm{x_{0} -\bar{x}} \leq \delta.
	\end{align*}
	Hence,	 for every $k \in \mathbb{N}$, the inequality $  \dist  \lr{x_{k+1}, \Fix T_{k} } \leq \rho_{k} \dist  \lr{x_{k}, \Fix T_{k} }$
	follows directly from \cref{proposition:TFMetricSubreg}\cref{proposition:TFMetricSubreg:B} with replacing 
	 $x=x_{k}$, $\alpha =\alpha_{k}$, $\lambda =\lambda_{k}$, $T=T_{k}$, $\Fix T=\Fix T_{k} $, and $F_{\lambda}x =x_{k+1}$.
	
	\cref{theorem:KMI:MSubR:moreassum}:
	It is easy to deduce  \cref{theorem:KMI:MSubR:moreassum}i.\,by our assumptions and \cref{proposition:TFMetricSubreg}\cref{proposition:TFMetricSubreg:rho}. 
	
	The result in \cref{theorem:KMI:MSubR:moreassum}ii.\,is immediate from  \cref{theorem:KMI:Distance}   above.
	
	As a consequence of \cref{fact:quasinonexCC}, we know that $C$ is nonempty closed and convex. 
	Taking \cref{theorem:KMI:Fejer}, \cref{theorem:KMI:MSubR:moreassum}ii.\,, and \cite[Theorem~5.12]{BC2017} into account, we easily obtain \cref{eq:theorem:KMI:EQ} which implies the desired $R$-linear convergence of $\lr{x_{k}}_{k \in \mathbb{N}}$.	
	
	In addition,  if $C$ is affine,  then, via the  convergence of $(x_{k})_{k \in \mathbb{N}}$ proved above and \cite[Propositin~5.9(ii)]{BC2017}, we obtain $\hat{x} =\Pro_{C}x_{0}$.
	
	Altogether, the proof is complete.
\end{proof}

\begin{remark} \label{Remark:KMI}
	\cref{theorem:KMI} generalizes \cite[Corollary~5.4]{BauschkeNollPhan2015} from the following two aspects.
	\begin{itemize}
		\item We extend \cite[Corollary~5.4]{BauschkeNollPhan2015} from boundedly linearly regularity to metrically subregularity.
		\item The  operator $T$ in \cite[Corollary~5.4]{BauschkeNollPhan2015} is   replaced by the affine combination $\lr{1-\lambda_{k}}\Id + \lambda_{k} T_{k}$ in \cref{theorem:KMI}  which reduces to $T$ when $(\forall k \in \mathbb{N})$ $\lambda_{k} \equiv 1$ and $T_{k} \equiv T$.
	\end{itemize}
\end{remark}

Applying \cref{corollary:stabilityxkconverge} and the convergence of the exact version of the non-stationary  Krasnosel'ski\v{\i}-Mann iterations presented in \cref{theorem:KMI}, we obtain the strong convergence of the inexact version of the non-stationary  Krasnosel'ski\v{\i}-Mann iterations in \cref{proposition:KMI} below.
\begin{theorem}  \label{proposition:KMI}
	Let $(\forall k \in \mathbb{N})$ $T_{k} :\mathcal{H} \to \mathcal{H}$ be $\alpha_{k}$-averaged with $\alpha_{k} \in \left]0,1\right]$ and $C :=  \Fix T_{k}  \neq \varnothing$.  Let $\bar{x} \in C$ and let $(\forall k \in \mathbb{N})$ $\lambda_{k} \in \left]0,\frac{1}{\alpha_{k}}\right[\,$. 
	Suppose that  $(\forall k \in \mathbb{N})$  $\Id - T_{k}$ is metrically subregular at $\bar{x}$ for $0 = \lr{\Id -T_{k}}\bar{x}$, i.e., for every $k \in \mathbb{N}$,
	\begin{align*}
	\lr{\exists \gamma_{k} >0} \lr{\exists \delta_{k} >0} \lr{\forall x \in B[\bar{x}; \delta_{k}]} \quad \dist \lr{x, \lr{\Id - T_{k}}^{-1} 0 } \leq \gamma_{k} \norm{x -T_{k}x}.
	\end{align*}
	Suppose  that 
	 $\delta: =\inf_{k \in \mathbb{N}} \delta_{k} >0$
	and that $\epsilon:= \sum_{k \in \mathbb{N}} \eta_{k} \norm{e_{k}} < \delta$.	 
	Suppose that $0< \underline{\lambda}:=\liminf_{k \to \infty} \lambda_{k} \leq \overline{\lambda}:=\limsup_{k \to \infty} \lambda_{k} < \frac{1}{\alpha}$ where $\alpha:=\limsup_{k \to \infty } \alpha_{k}  >0$ and $\gamma :=\limsup_{k \to \infty }\gamma_{k}  < \infty$.

	Let $\hat{\delta} \in \left]0, \delta -\epsilon\right]$ and let  $x_{0} \in B\left[\bar{x}; \hat{\delta}\right]$. 
	Define
	\begin{align}  \label{eq:proposition:KMI:xk}
	\lr{\forall k \in \mathbb{N}} \quad x_{k+1} = \lr{1-\lambda_{k}}x_{k} +\lambda_{k} T_{k}x_{k} + \eta_{k} e_{k}.
	\end{align}
	
		Then $\lr{x_{k}}_{k \in \mathbb{N}}$ converges  strongly  to a point in $  C$. In particular,   if $C$ is affine, then $\lr{x_{k}}_{k \in \mathbb{N}}$ converges strongly to $\Pro_{C}x_{0}$.
	
\end{theorem}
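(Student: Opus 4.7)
My plan is to recast \cref{eq:proposition:KMI:xk} as a perturbation of the exact non-stationary Krasnosel'ski\v{\i}-Mann iteration driven by the operators $G_k := (1-\lambda_k)\Id + \lambda_k T_k$, and then combine the stability transfer principle \cref{corollary:stabilityxkconverge} with the $R$-linear convergence statement \cref{theorem:KMI}\cref{theorem:KMI:Lindear}. By \cref{fact:Averagedlambdaalpha}, each $G_k$ is $\lambda_k\alpha_k$-averaged, hence nonexpansive; since $\lambda_k \neq 0$ one has $\Fix G_k = \Fix T_k = C$. The iteration reads $x_{k+1} = G_k x_k + \eta_k e_k$ with $\sum_k \norm{\eta_k e_k} = \epsilon < \infty$, and for each $i \in \mathbb{N}$ I would introduce the translated basic method $\xi_0(i) := x_i$, $\xi_{k+1}(i) := G_{k+i}\xi_k(i)$.

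The main obstacle is to guarantee that every starting point $x_i$ of the translated method lies in the common ball $B[\bar{x};\delta]$ on which the metric subregularity estimate is in force. Applying \cref{lemma:yxzx}\cref{lemma:yxzx:lambdaalpha} at each step, with $T = T_k$, $\alpha = \alpha_k$, $\lambda = \lambda_k$, $\eta = \eta_k$, $e = e_k$, yields $\norm{x_{k+1} - \bar{x}} \leq \norm{x_k - \bar{x}} + \eta_k\norm{e_k}$; induction from $x_0 \in B[\bar{x};\hat{\delta}]$ then gives
\begin{align*}
\norm{x_i - \bar{x}} \leq \norm{x_0 - \bar{x}} + \sum_{j=0}^{i-1}\eta_j\norm{e_j} \leq \hat{\delta}+\epsilon \leq \delta
\end{align*}
for every $i \in \mathbb{N}$, so that $\xi_0(i) \in B[\bar{x};\delta] \subseteq B[\bar{x};\delta_{k+i}]$ uniformly in $i, k$.

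Given this, for each fixed $i$ I invoke \cref{theorem:KMI}\cref{theorem:KMI:Lindear} on the shifted data $(T_{k+i},\lambda_{k+i},\alpha_{k+i},\gamma_{k+i},\delta_{k+i})_{k \in \mathbb{N}}$; shifting indices preserves $\Fix T_{k+i} \equiv C$ and does not disturb $\liminf$ and $\limsup$, so the hypotheses $0 < \underline{\lambda} \leq \overline{\lambda} < 1/\alpha$ and $\gamma < \infty$ transfer. This produces $\xi(i) \in C$ with $\xi_k(i) \to \xi(i)$ strongly (indeed $R$-linearly) as $k \to \infty$. Finally, \cref{corollary:stabilityxkconverge} applied to the nonexpansive family $(G_k)$, error terms $(\eta_k e_k)$, and closed set $C$ delivers the strong convergence $x_k \to \bar{\xi} \in C$.

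For the affine case, \cref{theorem:KMI}\cref{theorem:KMI:Lindear} additionally identifies $\xi(i) = \Pro_C x_i$, so the continuity of $\Pro_C$ combined with $x_i \to \bar{\xi} \in C$ gives $\bar{\xi} = \lim_i \Pro_C x_i$. Pinning down this limit as $\Pro_C x_0$ is the delicate step: the sequence $(x_k)$ produced here is only quasi-Fej\'er monotone with respect to $C$ (by \cref{theorem:KMBasic}\cref{theorem:KMBasic:quasiFejer}), not Fej\'er monotone, so \cite[Proposition~5.9(ii)]{BC2017} does not apply directly. I expect the argument to require a sharpened projection identity exploiting the affine structure of $C$ together with the summability $\sum_k \eta_k\norm{e_k} < \infty$, showing that $(\Pro_C x_k)_{k \in \mathbb{N}}$ is Cauchy with limit $\Pro_C x_0$; this quasi-Fej\'er refinement is where the main technical subtlety of the affine subclaim resides.
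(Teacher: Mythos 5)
Your argument for the main assertion is the paper's proof almost verbatim: the same operators $G_{k}=(1-\lambda_{k})\Id+\lambda_{k}T_{k}$, the same a priori bound $\norm{x_{i}-\bar{x}}\leq\hat{\delta}+\epsilon\leq\delta$ guaranteeing that every translated method starts inside $B[\bar{x};\delta]$, the same appeal to \cref{theorem:KMI} on the shifted data $(T_{k+i},\lambda_{k+i},\dots)$, and the same final invocation of \cref{corollary:stabilityxkconverge}. The only cosmetic difference is that you derive the a priori bound directly from \cref{lemma:yxzx}\cref{lemma:yxzx:lambdaalpha}, whereas the paper cites \cref{theorem:KMBasic}\cref{theorem:KMBasic:induction}, which is itself proved from that proposition. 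For the strong convergence of $(x_{k})_{k\in\mathbb{N}}$ to a point of $C$ your proposal is therefore complete and correct.

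The affine subclaim is where the two part ways, and your suspicion is better founded than you may realize. The paper disposes of it in one line by citing \cite[Proposition~5.9(ii)]{BC2017}; as you observe, that result requires Fej\'er monotonicity with respect to $C$, which the perturbed sequence does not enjoy (it is only quasi-Fej\'er monotone), so the paper's own justification of this step is the problematic one. However, the repair you sketch --- showing that $(\Pro_{C}x_{k})_{k\in\mathbb{N}}$ is Cauchy with limit $\Pro_{C}x_{0}$ --- cannot succeed either, because the identification $\hat{x}=\Pro_{C}x_{0}$ is simply false for the inexact iteration. Take $\mathcal{H}=\mathbb{R}^{2}$, $C=\mathbb{R}\times\{0\}$, $T_{k}\equiv\Pro_{C}$ (firmly nonexpansive, so $\alpha_{k}=\tfrac{1}{2}$), $\lambda_{k}\equiv 1$, $\gamma_{k}\equiv 1$, $\delta_{k}\equiv 1$, $e_{k}=(1,0)$, $\eta_{k}=\epsilon\, 2^{-k-1}$ with $\epsilon<1$, and $x_{0}=(0,\hat{\delta})$ with $\hat{\delta}\in\left]0,1-\epsilon\right]$. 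Every hypothesis of \cref{proposition:KMI} is satisfied, yet $x_{k}\to(\epsilon,0)\neq(0,0)=\Pro_{C}x_{0}$: the error terms drift the iterates along $C$, and the limit retains the accumulated drift $\sum_{k}\eta_{k}e_{k}$. So the sequence $(\Pro_{C}x_{k})_{k\in\mathbb{N}}$ is indeed Cauchy, but its limit is not $\Pro_{C}x_{0}$. The honest conclusion is that the affine refinement must be dropped (or restricted to vanishing errors); your instinct to isolate this step as the locus of the difficulty was exactly right, but the obstruction is not technical --- the claim itself fails.
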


\begin{proof}
	Define 
		\begin{align} \label{eq:proposition:KMI:GKTK}
	(\forall k \in \mathbb{N}) \quad G_{k}:= \lr{1-\lambda_{k}} \Id +\lambda_{k} T_{k}.
	\end{align}
	Then \cref{eq:proposition:KMI:xk} becomes
	\begin{align}  \label{eq:proposition:KMI:Gk}
	(\forall k \in \mathbb{N}) \quad x_{k+1}=G_{k}x_{k}+ \eta_{k}e_{k}. 
	\end{align}
	Inasmuch as $(\forall k \in \mathbb{N})$ $T_{k} :\mathcal{H} \to \mathcal{H}$ is $\alpha_{k}$-averaged  and $(\forall k \in \mathbb{N})$ $\lambda_{k} \in \left]0,\frac{1}{\alpha_{k}}\right[\,$, we know, by \cref{fact:Averagedlambdaalpha},  that $(\forall k \in \mathbb{N})$ $G_{k}$ is $\lambda_{k} \alpha_{k}$-averaged, which implies that $(\forall k \in \mathbb{N})$ $G_{k}$ is nonexpansive.
	
	Because $x_{0} \in B\left[\bar{x}; \hat{\delta}\right]$ and $\bar{x} \in C=\cap_{k\in \mathbb{N}} \Fix T_{k}$, due to  \cref{theorem:KMBasic}\cref{theorem:KMBasic:induction}, we observe that
	\begin{align} \label{eq:proposition:KMI:delta}
	(\forall k \in \mathbb{N}) \quad \norm{x_{k+1} -\bar{x}} \leq  \norm{x_{0} -\bar{x}} + \sum^{k}_{i =0} \eta_{i}\norm{e_{i}} \leq \hat{\delta} +\epsilon \leq \delta.
	\end{align}
	
	Define
	\begin{align}  \label{eq:proposition:KMI:xik}
	(\forall i \in \mathbb{N}) 
	(\forall k \in \mathbb{N})   \quad \xi_{k+1}(i)=G_{k+i} \xi_{k}(i)   \quad \text{and} \quad \xi_{0}(i)=x_{i}.
	\end{align}
	In view of \cref{eq:proposition:KMI:delta}, we know that $(\forall i \in \mathbb{N})$  $\xi_{0}(i) \in B \left[\bar{x};\delta\right]$.
	For every $i \in \mathbb{N}$, applying  \cref{theorem:KMI}\cref{theorem:KMI:MSubR:moreassum}  
	with 
	$(\forall k \in \mathbb{N})$ $x_{k}=\xi_{k}(i)$, $T_{k}=T_{k+i}$, and $\lambda_{k}=\lambda_{k+i}$, we obtain that
	the sequence   $\lr{\xi_{k}(i)}_{k \in \mathbb{N}}$
		converges strongly to  a point in $C$. 
		In addition, as a result of \cref{fact:quasinonexCC} and \cref{eq:proposition:KMI:GKTK}, $C =\cap_{k \in \mathbb{N}} \Fix G_{k}$ is nonempty and closed. 
	
	Therefore, 
	invoking the strong convergence of $(\forall i \in \mathbb{N})$ $\lr{\xi_{k}(i)}_{k \in \mathbb{N}}$, taking \cref{eq:proposition:KMI:Gk} and  \cref{eq:proposition:KMI:xik} into account, and employing \cref{corollary:stabilityxkconverge}, we obtained that $x_{k} \to \hat{x} \in C$.
 If $C$ is affine,  then,  the  convergence of $(x_{k})_{k \in \mathbb{N}}$  and \cite[Propositin~5.9(ii)]{BC2017} lead to $\hat{x} =\Pro_{C}x_{0}$.
\end{proof}


\section{Convergence of Generalized Proximal Point Algorithms} \label{section:GPPA}

Throughout this section, $A :\mathcal{H} \to 2^{\mathcal{H}}$ is maximally monotone with $\zer A \neq \varnothing$ and 
\begin{align} \label{eq:GPPA}
(\forall k \in \mathbb{N}) \quad x_{k+1} = \lr{1-\lambda_{k}}x_{k} +\lambda_{k} \J_{c_{k} A}x_{k} +\eta_{k}e_{k}, 
\end{align}
where $x_{0} \in \mathcal{H}$ is the \emph{initial point} and $(\forall k \in \mathbb{N})$ $\lambda_{k} \in \left[0,2\right]$ and $\eta_{k} \in \mathbb{R}_{+}$ are the \emph{relaxation coefficients},  $c_{k} \in \mathbb{R}_{++}$ is the \emph{regularization coefficient},  and $e_{k} \in \mathcal{H}$ is the \emph{error term}. 

Generalized proximal point algorithms generate the iteration sequence by conforming to the scheme  \cref{eq:GPPA}. The classic proximal point algorithm generates the iteration sequence by following \cref{eq:GPPA} with $(\forall k \in \mathbb{N})$ $\lambda_{k} \equiv 1$, $e_{k} \equiv 0$, and $\eta_{k} \equiv 0$.
In this section, we investigate the weak and strong convergence of generalized proximal point algorithms for solving the monotone inclusion problem, i.e., finding a point in $\zer A$.

\subsection{Iteration sequences generated by generalized proximal point algorithms}

In this subsection, we provide some properties of the iteration sequences generalized by \cref{eq:GPPA}.

\begin{lemma}  \label{lemma:GPPAPro} 
Let $\bar{x} \in \zer A$. Set
	\begin{align*}
	(\forall k \in \mathbb{N}) \quad y_{k}= \lr{1-\lambda_{k}}x_{k} +\lambda_{k} \J_{c_{k} A}x_{k}    \text{ and } \varepsilon_{k}  = \eta_{k} \norm{e_{k}} \lr{2\norm{y_{k} -\bar{x}} +\eta_{k} \norm{e_{k}}}.
	\end{align*}
	Then the following hold. 
	\begin{enumerate}
		\item \label{lemma:GPPAPro:yk} $(\forall k \in \mathbb{N})$  $\norm{y_{k} -\bar{x}}^{2} \leq \norm{x_{k} -\bar{x}}^{2} -\lambda_{k} \lr{2 -\lambda_{k}} \norm{x_{k} -\J_{c_{k} A}x_{k}}^{2}$.
		\item \label{lemma:GPPAPro:Ineq}  $(\forall k \in \mathbb{N})$  $\norm{x_{k+1} -\bar{x}}^{2} \leq \norm{x_{k} -\bar{x}}^{2} -\lambda_{k}  \lr{2 -\lambda_{k}} \norm{x_{k} -\J_{c_{k} A}x_{k}}^{2} +\varepsilon_{k}$.
	
		\item \label{lemma:GPPAPro:sumek} 	Suppose that $\sum_{k \in \mathbb{N}} \eta_{k} \norm{e_{k}} < \infty$. Then the following hold.
		\begin{enumerate}
			\item \label{lemma:GPPAPro:sumvarepsion} $\sum_{k \in \mathbb{N}} \varepsilon_{k} = \sum_{k \in \mathbb{N}}  \eta_{k} \norm{e_{k}} \lr{2\norm{y_{k} -\bar{x}} +\eta_{k} \norm{e_{k}}} < \infty $
				\item \label{lemma:GPPAPro:exists} $\lr{\forall \bar{x} \in \zer A}$ $\lim_{k \to \infty} \norm{x_{k} -\bar{x}}$ exists.
			\item \label{lemma:GPPAPro:sumlambda}  $\sum_{k \in \mathbb{N}} \lambda_{k}  \lr{2 -\lambda_{k}} \norm{x_{k} -\J_{c_{k} A}x_{k}}^{2} <\infty.$
\end{enumerate}

		 	\item  \label{lemma:GPPAPro:OR}  Suppose that  $\sum_{k \in \mathbb{N}} \lambda_{k} \lr{2 -\lambda_{k}} = \infty$. Suppose that $c:= \inf_{k \in \mathbb{N}} c_{k} >0$ or  that $\sum_{k \in \mathbb{N}} c_{k}^{2}=\infty$ and $(\forall k \in \mathbb{N})$ $\lambda_{k} \equiv 1$. Then $\sum_{k \in \mathbb{N}} c^{2}_{k} \lambda_{k} \lr{2 -\lambda_{k}} = \infty$.
		 	
		\item \label{lemma:GPPAPro:lim} 	Suppose that 
		$\sum_{k \in \mathbb{N}} \eta_{k} \norm{e_{k}} < \infty$ and  $\sum_{k \in \mathbb{N}} \lambda_{k} \lr{2 -\lambda_{k}} = \infty$. Suppose that $ \inf_{k \in \mathbb{N}} c_{k} >0 $ or  that $\sum_{k \in \mathbb{N}} c_{k}^{2}=\infty$ and $(\forall k \in \mathbb{N})$ $\lambda_{k} \equiv 1$. 
	Then $\liminf_{k \to \infty}  \norm{\frac{1}{c_{k}} \lr{x_{k} - \J_{c_{k} A}x_{k} }} =0$.
	\end{enumerate} 
\end{lemma}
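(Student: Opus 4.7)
The plan is to verify parts (i)--(v) in sequence, using \cref{lemma:resolvents:yxzx} and \cref{fact:alphakINEQ} as the two main tools.

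For (i) and (ii), I would substitute $x = x_k$, $\gamma = c_k$, $\lambda = \lambda_k$, $\eta = \eta_k$, $e = e_k$ directly into \cref{lemma:resolvents:yxzx}; then $y_{x_k} = y_k$ and $z_{x_k} = x_{k+1}$ by the defining recursion \cref{eq:GPPA}, and the two displayed inequalities there become exactly (i) and (ii).

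The main obstacle is (iii), because $\varepsilon_k$ depends on $\norm{y_k - \bar{x}}$, so \cref{fact:alphakINEQ} cannot be applied directly to the inequality in (ii) before we know $(y_k)$ is bounded. To untangle this, I would first observe from (i) together with $\lambda_k \in [0,2]$ that $\norm{y_k - \bar{x}} \leq \norm{x_k - \bar{x}}$, so \cref{eq:GPPA} and the triangle inequality yield $\norm{x_{k+1} - \bar{x}} \leq \norm{x_k - \bar{x}} + \eta_k \norm{e_k}$. Applying \cref{fact:alphakINEQ} with $\alpha_k = \norm{x_k - \bar{x}}$ and $\beta_k = \gamma_k = 0$ delivers (iii)(b) at once and also shows that $(x_k)$, hence $(y_k)$ and $(\eta_k \norm{e_k})$, are bounded. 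Then $\sum_k \varepsilon_k \leq \sum_k \eta_k \norm{e_k}(2M + \eta_k \norm{e_k}) < \infty$ with $M$ a bound for $\norm{y_k - \bar{x}}$, which is (iii)(a). Feeding this back into (ii) and applying \cref{fact:alphakINEQ} with $\alpha_k = \norm{x_k - \bar{x}}^2$, $\beta_k = \lambda_k(2 - \lambda_k) \norm{x_k - \J_{c_k A} x_k}^2$, $\gamma_k = 0$ will deliver (iii)(c) (and re-confirm (iii)(b)).

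Part (iv) is a direct computation: in the first case $c_k^2 \geq c^2 > 0$ factors out of the sum, and in the second case $\lambda_k(2-\lambda_k) \equiv 1$ reduces it to $\sum c_k^2$, which is infinite by hypothesis. For (v) I would argue by contradiction: if $\liminf_k \norm{c_k^{-1}(x_k - \J_{c_k A} x_k)}^2 > 0$, then there exist $\delta > 0$ and $K \in \mathbb{N}$ with $\norm{x_k - \J_{c_k A} x_k}^2 \geq \delta c_k^2$ for all $k \geq K$; multiplying by $\lambda_k(2 - \lambda_k)$ and summing, part (iv) forces $\sum_k \lambda_k(2 - \lambda_k) \norm{x_k - \J_{c_k A} x_k}^2 = \infty$, contradicting (iii)(c).
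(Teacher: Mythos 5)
Your proposal is correct and follows essentially the same route as the paper: parts (i)--(ii) by direct substitution into \cref{lemma:resolvents:yxzx}, part (iii) via \cref{fact:alphakINEQ}, part (iv) by the same two-case computation, and part (v) by combining (iii)(c) with (iv) (the paper states this directly; your contradiction argument is just the same step spelled out). The one point where you diverge is the boundedness needed to sum $\varepsilon_k$ in (iii): the paper imports the boundedness of $(x_k)_{k\in\mathbb{N}}$ from an external reference (\cite[Proposition~3.3(ii)]{OuyangWeakStrongGPPA2021}), whereas you derive it inline from (i), the bound $\norm{y_k-\bar{x}}\leq\norm{x_k-\bar{x}}$ (valid since $\lambda_k(2-\lambda_k)\geq 0$ for $\lambda_k\in[0,2]$), the triangle inequality, and a first application of \cref{fact:alphakINEQ}. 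Your version is self-contained and mirrors the argument the paper itself uses in \cref{theorem:KMBasic}\cref{theorem:KMBasic:induction}--\cref{theorem:KMBasic:sumek}, at the cost of one extra invocation of \cref{fact:alphakINEQ}; both are sound.
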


\begin{proof}
\cref{lemma:GPPAPro:yk}$\&$\cref{lemma:GPPAPro:Ineq}: For every $k \in \mathbb{N}$, applying 	\cref{lemma:resolvents:yxzx} with $x =x_{k}$, $y_{x}=y_{k}$, $z_{x}=x_{k+1}$, $\lambda =\lambda_{k}$, $\gamma =c_{k}$, $\eta =\eta_{k}$, and $e=e_{k}$, we deduce the required inequalities in \cref{lemma:GPPAPro:yk} and \cref{lemma:GPPAPro:Ineq}.

\cref{lemma:GPPAPro:sumek}:  Because $\sum_{k \in \mathbb{N}} \eta_{k} \norm{e_{k}} < \infty$, due to
\cite[Proposition~3.3(ii)]{OuyangWeakStrongGPPA2021},
we know that  $(x_{k})_{k \in \mathbb{N}}$ is bounded. This connected with $\sum_{k \in \mathbb{N}} \eta_{k} \norm{e_{k}} < \infty$ and  \cref{lemma:GPPAPro:yk} above implies that $\lr{2\norm{y_{k} -\bar{x}} +\eta_{k} \norm{e_{k}}}_{k \in \mathbb{N}}$ is bounded.  Hence, it is easy to see that $\sum_{k \in \mathbb{N}} \varepsilon_{k} = 	\sum_{k \in \mathbb{N}}  \eta_{k} \norm{e_{k}} \lr{2\norm{y_{k} -\bar{x}} +\eta_{k} \norm{e_{k}}} < \infty$,
which, combined with \cref{fact:alphakINEQ}  and \cref{lemma:GPPAPro:Ineq} above, entails that 
$\lim_{k \to \infty} \norm{x_{k} -\bar{x}}$ exists and that $\sum_{k \in \mathbb{N}} \lambda_{k}  \lr{2 -\lambda_{k}} \norm{x_{k} -\J_{c_{k} A}x_{k}}^{2} <\infty$.

\cref{lemma:GPPAPro:OR}: If $c= \inf_{k \in \mathbb{N}} c_{k} >0$, then $\sum_{k \in \mathbb{N}} c^{2}_{k} \lambda_{k} \lr{2 -\lambda_{k}} \geq c^{2}  \sum_{k \in \mathbb{N}} \lambda_{k} \lr{2 -\lambda_{k}} = \infty$.
If $\sum_{k \in \mathbb{N}} c_{k}^{2}=\infty$ and $(\forall k \in \mathbb{N})$ $\lambda_{k} \equiv 1$, then $\sum_{k \in \mathbb{N}} c^{2}_{k} \lambda_{k} \lr{2 -\lambda_{k}} = \sum_{k \in \mathbb{N}} c^{2}_{k} = \infty$.

\cref{lemma:GPPAPro:lim}: Because  $\sum_{k \in \mathbb{N}} \eta_{k} \norm{e_{k}} < \infty$, via \cref{lemma:GPPAPro:sumlambda} above, we observe that 
\begin{align*}
\sum_{k \in \mathbb{N}} \lambda_{k} \lr{ 2 -\lambda_{k}} c_{k}^{2} \norm{\frac{1}{c_{k}} \lr{x_{k} - \J_{c_{k} A}x_{k} }}^{2} <\infty,
\end{align*}  
which, connected with  \cref{lemma:GPPAPro:OR}, yields that $\liminf_{k \to \infty}  \norm{\frac{1}{c_{k}} \lr{x_{k} - \J_{c_{k} A}x_{k} }} =0$.
\end{proof}

Note that the assumption $(\forall k \in \mathbb{N})$  $c_{k+1} \geq c_{k}$ in \cref{lemma:RkJckINEQ} below  also appears in the seminal work \cite[Theorem~2]{Rockafellar1976} although a weaker requirement works therein. 
\begin{lemma} \label{lemma:RkJckINEQ}
	Define $(\forall k \in \mathbb{N})$ $ \R_{k}:= 2 \J_{c_{k} A} -\Id$.	Suppose that 
	$\sum_{k \in \mathbb{N}} \eta_{k} \norm{e_{k}} < \infty$.
	Then the following statements hold.
	\begin{enumerate}
		\item \label{lemma:RkJckINEQ:JckA} $(\forall k \in \mathbb{N})$ $  \frac{1}{2} \norm{\R_{k+1} x_{k} -\R_{k} x_{k} } \leq \abs{1-\frac{c_{k+1}}{c_{k}} } \norm{x_{k} - \J_{c_{k} A}x_{k}} $.
		\item \label{lemma:RkJckINEQ:xk+1} $(\forall k \in \mathbb{N})$ $ \norm{x_{k+1} - \J_{c_{k+1} A}x_{k+1}} \leq \lr{ 1 + \abs{1-\frac{c_{k+1}}{c_{k}}}} \norm{ x_{k} - \J_{c_{k } A}x_{k } } + \eta_{k} \norm{e_{k}}$.
		\item  \label{lemma:RkJckINEQ:cknonincreasing} Assume that $(\forall k \in \mathbb{N})$  $c_{k+1} \geq c_{k}$.
		 Then 
		\begin{align*}
		\lr{\forall k \in \mathbb{N}} \quad \frac{1}{c_{k+1}}   \norm{x_{k+1} - \J_{c_{k+1} A}x_{k+1}} \leq  \frac{1}{c_{k}}  \norm{ x_{k} - \J_{c_{k } A}x_{k } } + \frac{ \eta_{k} }{ c_{k+1} } \norm{e_{k}}.
		\end{align*}
		\item  \label{lemma:RkJckINEQ:to0} Suppose that $\sum_{k \in \mathbb{N}} \lambda_{k} \lr{2 -\lambda_{k}} = \infty$  and  that $(\forall k \in \mathbb{N})$  $c_{k+1} \geq c_{k}$. Then $\lim_{k \to \infty}  \frac{1}{c_{k}} \lr{x_{k} - \J_{c_{k} A}x_{k} } =0$. 
	\end{enumerate}
\end{lemma}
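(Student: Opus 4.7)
My plan is to prove the four parts in order, since each one builds on the previous.

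For \cref{lemma:RkJckINEQ:JckA}, observe that $R_{k+1}x_k-R_kx_k=2(\J_{c_{k+1}A}x_k-\J_{c_kA}x_k)$, so it suffices to bound $\|\J_{c_{k+1}A}x_k-\J_{c_kA}x_k\|$. The key is the resolvent identity from \cref{corollary:JcA}\cref{corollary:JcA:mulambda}: with $\gamma=c_k$, $\mu=c_{k+1}$, it gives $\J_{c_kA}x_k=\J_{c_{k+1}A}\!\lr{\frac{c_{k+1}}{c_k}x_k+\lr{1-\frac{c_{k+1}}{c_k}}\J_{c_kA}x_k}$. Apply the nonexpansiveness of $\J_{c_{k+1}A}$ to the difference and simplify; the cross-terms cancel and one is left with exactly $\abs{1-c_{k+1}/c_k}\,\|x_k-\J_{c_kA}x_k\|$.

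For \cref{lemma:RkJckINEQ:xk+1}, set $y_k:=(1-\lambda_k)x_k+\lambda_k\J_{c_kA}x_k$, so $x_{k+1}=y_k+\eta_ke_k$. Since $\J_{c_{k+1}A}$ is firmly nonexpansive (by \cref{fact:cAMaximallymonotone,fact:firmlynonexpansiveaveraged}), $\Id-\J_{c_{k+1}A}$ is in particular nonexpansive, which yields
\[
\norm{x_{k+1}-\J_{c_{k+1}A}x_{k+1}}\leq\norm{x_{k+1}-y_k}+\norm{y_k-\J_{c_{k+1}A}y_k}=\eta_k\norm{e_k}+\norm{y_k-\J_{c_{k+1}A}y_k}.
\]
Then split via the triangle inequality and use the generalized form of \cref{lemma:RkJckINEQ:JckA} (the proof of \cref{lemma:RkJckINEQ:JckA} works at any point, not only $x_k$) to get $\norm{y_k-\J_{c_{k+1}A}y_k}\leq\lr{1+\abs{1-c_{k+1}/c_k}}\norm{y_k-\J_{c_kA}y_k}$. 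Finally, since $\J_{c_kA}$ is $\tfrac12$-averaged and $\lambda_k\in[0,2]=[0,1/\alpha]$ for $\alpha=\tfrac12$, \cref{lemma:yxzx}\cref{lemma:yxzx:lambdaalpha} (with $T=\J_{c_kA}$) gives $\norm{\J_{c_kA}y_k-y_k}\leq\norm{\J_{c_kA}x_k-x_k}$. Chaining these yields \cref{lemma:RkJckINEQ:xk+1}.

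For \cref{lemma:RkJckINEQ:cknonincreasing}, the assumption $c_{k+1}\geq c_k$ means $\abs{1-c_{k+1}/c_k}=c_{k+1}/c_k-1$, hence $1+\abs{1-c_{k+1}/c_k}=c_{k+1}/c_k$, and dividing \cref{lemma:RkJckINEQ:xk+1} by $c_{k+1}$ immediately gives the desired telescoping-type inequality. I expect this step to be the shortest.

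For \cref{lemma:RkJckINEQ:to0}, set $\alpha_k:=\frac{1}{c_k}\norm{x_k-\J_{c_kA}x_k}$ and $\varepsilon_k:=\frac{\eta_k}{c_{k+1}}\norm{e_k}$. Since $c_{k+1}\geq c_k\geq\cdots\geq c_0>0$, we have $\sum_k\varepsilon_k\leq\frac{1}{c_0}\sum_k\eta_k\norm{e_k}<\infty$; part \cref{lemma:RkJckINEQ:cknonincreasing} then reads $\alpha_{k+1}\leq\alpha_k+\varepsilon_k$, so \cref{fact:alphakINEQ} (with $\gamma_k\equiv\beta_k\equiv0$) ensures that $\lim_{k\to\infty}\alpha_k$ exists in $\mathbb{R}_+$. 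On the other hand, $c_{k+1}\geq c_k$ implies $\inf_{k\in\mathbb{N}}c_k=c_0>0$, so \cref{lemma:GPPAPro}\cref{lemma:GPPAPro:lim} applies and gives $\liminf_k\alpha_k=0$. Existence of the limit together with $\liminf=0$ forces $\lim_k\alpha_k=0$, which is exactly $\frac{1}{c_k}(x_k-\J_{c_kA}x_k)\to0$. The main obstacle is really part \cref{lemma:RkJckINEQ:xk+1}, where getting the coefficient $1+\abs{1-c_{k+1}/c_k}$ (rather than something worse, like $3+\abs{1-c_{k+1}/c_k}$) requires choosing to peel off $\eta_ke_k$ via the nonexpansive operator $\Id-\J_{c_{k+1}A}$ and then invoking the averaged-iterate inequality of \cref{lemma:yxzx}\cref{lemma:yxzx:lambdaalpha} to neutralize the $\lambda_k$-dependence.
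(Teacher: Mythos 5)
Your proof is correct. Parts (i), (iii) and (iv) coincide with the paper's argument (same resolvent identity for (i), same algebraic observation $1+\abs{1-c_{k+1}/c_k}=c_{k+1}/c_k$ for (iii), and the same combination of \cref{fact:alphakINEQ} with \cref{lemma:GPPAPro}\cref{lemma:GPPAPro:lim} for (iv), noting that monotonicity of $(c_k)_{k\in\mathbb{N}}$ gives $\inf_k c_k=c_0>0$). Part (ii), however, is proved by a genuinely different decomposition. The paper stays entirely in the language of the reflected resolvents: it writes $\norm{x_{k+1}-\J_{c_{k+1}A}x_{k+1}}=\tfrac12\norm{\R_{k+1}x_{k+1}-x_{k+1}}$, splits into the three terms $\R_{k+1}x_{k+1}-\R_{k+1}x_k$, $\R_{k+1}x_k-\R_kx_k$, $\R_kx_k-x_{k+1}$, and controls the last one through the identity $x_{k+1}-\R_kx_k=\lr{1-\tfrac{\lambda_k}{2}}\lr{x_k-\R_kx_k}+\eta_ke_k$ together with the bound $\norm{x_{k+1}-x_k}\leq\lambda_k\norm{x_k-\J_{c_kA}x_k}+\eta_k\norm{e_k}$ from the iteration; the $\lambda_k$-dependent pieces then cancel. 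You instead introduce the exact update $y_k$, strip off the error term via the nonexpansiveness of $\Id-\J_{c_{k+1}A}$ (which indeed follows from firm nonexpansiveness of the resolvent), change the resolvent parameter at $y_k$ using the pointwise form of (i), and then eliminate the $\lambda_k$-dependence by the residual-decrease inequality $\norm{Ty_x-y_x}\leq\norm{Tx-x}$ of \cref{lemma:yxzx}\cref{lemma:yxzx:lambdaalpha}. Both routes produce the same constant $1+\abs{1-c_{k+1}/c_k}$; yours is arguably more modular, since it reuses an already-proved property of averaged iterations instead of re-deriving the cancellation by hand, while the paper's version is self-contained within the reflected-resolvent calculus it has already set up for part (i). One small point of care: \cref{lemma:yxzx}\cref{lemma:yxzx:lambdaalpha} is stated under the hypothesis $\Fix T\neq\varnothing$, which holds here because $\Fix\J_{c_kA}=\zer A\neq\varnothing$ is a standing assumption of the section — worth saying explicitly when you invoke it.
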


\begin{proof}
Because $A$ is maximally monotone, 
via \cref{fact:cAMaximallymonotone} and
	\cite[Propositions~4.4 and 20.22]{BC2017}, 
	  we know that $(\forall k \in \mathbb{N})$ $ \R_{k}$  is nonexpansive. Clearly, for every $k \in \mathbb{N}$,
	\begin{subequations}
		\begin{align}
		&\R_{k} - \Id = 2 \lr{ \J_{c_{k} A} -\Id } \Leftrightarrow  \J_{c_{k} A} -\Id  = \frac{1}{2} \lr{ \R_{k} - \Id }; \label{eq:lemma:RkJckINEQ:Rk}\\
		&\R_{k+1} -\R_{k} = 2 \lr{ \J_{c_{k+1} A} -\J_{c_{k} A} }. \label{eq:lemma:RkJckINEQ:Rk+1} 
		\end{align}
	\end{subequations}

	Combine \cref{eq:GPPA} and \cref{eq:lemma:RkJckINEQ:Rk} to derive that 
	\begin{align*}
\lr{\forall k \in \mathbb{N}} \quad 	x_{k+1} =   x_{k} + \lambda_{k}  \lr{\J_{c_{k} A}x_{k} - x_{k}  }  +\eta_{k}e_{k} 
	= x_{k} + \frac{ \lambda_{k}   }{ 2 }\lr{\R_{k} x_{k} - x_{k}  }  +\eta_{k}e_{k},
	\end{align*}
	which follows immediately by
	\begin{align}\label{eq:lemma:RkJckINEQ:xkRk}
	(\forall k \in \mathbb{N}) \quad x_{k+1} - \R_{k}x_{k} = \lr{1 - \frac{\lambda_{k}}{2}} \lr{x_{k} - \R_{k}x_{k} } + \eta_{k} e_{k}.
	\end{align}
	
\cref{lemma:RkJckINEQ:JckA}:	 For every $ k \in \mathbb{N}$, applying \cref{corollary:JcA}\cref{corollary:JcA:mulambda} in the second equality and employing the nonexpansiveness of $\J_{c_{k+1} A}$ in the inequality below, we obtain that
	\begin{align*}
	 \frac{1}{2} \norm{\R_{k+1} x_{k} -\R_{k} x_{k} } 
	\stackrel{\cref{eq:lemma:RkJckINEQ:Rk+1}}{=}&  \norm{\J_{c_{k+1} A}x_{k} -\J_{c_{k} A}x_{k} }\\  
	~=~&  \norm{\J_{c_{k+1} A}x_{k} -\J_{c_{k+1} A} \lr{    \frac{c_{k+1}}{c_{k}} x_{k} + \lr{1 - \frac{c_{k+1}}{c_{k}}} \J_{c_{k} A}x_{k}} }\\  
	~ \leq~ &\abs{1-\frac{c_{k+1}}{c_{k}} } \norm{x_{k} - \J_{c_{k} A}x_{k}},
	\end{align*}  
	which follows directly by \cref{lemma:RkJckINEQ:JckA}.
	
	 \cref{lemma:RkJckINEQ:xk+1}:	 Now, for every $k \in \mathbb{N}$, by virtue of the nonexpansiveness of $\R_{k}$ in the second inequality below,  
	\begin{align*}
 \norm{x_{k+1} - \J_{c_{k+1} A}x_{k+1}}  
	\stackrel{\cref{eq:lemma:RkJckINEQ:Rk}}{=}& \frac{1}{2} \norm{ \R_{k+1} x_{k+1} -x_{k+1} }\\
	~\leq~& \frac{1}{2} \norm{\R_{k+1} x_{k+1} -\R_{k+1} x_{k} } + \frac{1}{2} \norm{\R_{k+1} x_{k} -\R_{k} x_{k} } + \frac{1}{2} \norm{\R_{k} x_{k} -x_{k+1} } \\
	~\leq~& \frac{1}{2} \norm{ x_{k+1} - x_{k} } + \frac{1}{2} \norm{\R_{k+1} x_{k} -\R_{k} x_{k} } + \frac{1}{2} \norm{\R_{k} x_{k} -x_{k+1} }\\
	\stackrel{\cref{eq:lemma:RkJckINEQ:xkRk}}{\leq}& \frac{1}{2} \norm{ x_{k+1} - x_{k} } + \frac{1}{2} \norm{\R_{k+1} x_{k} -\R_{k} x_{k} } + \frac{ 1 - \frac{\lambda_{k}}{2}}{2}  \norm{\R_{k} x_{k} -x_{k} } +\frac{1}{2}\eta_{k} \norm{e_{k}}\\
	~\stackrel{\text{\cref{lemma:RkJckINEQ:JckA}}}{\leq} ~& \frac{1}{2} \norm{ x_{k+1} - x_{k} } + \abs{1-\frac{c_{k+1}}{c_{k}} } \norm{x_{k} - \J_{c_{k} A}x_{k}} + \frac{ 1 - \frac{\lambda_{k}}{2}}{2}  \norm{\R_{k} x_{k} -x_{k} } +\frac{1}{2}\eta_{k} \norm{e_{k}}\\
	\stackrel{\cref{eq:lemma:RkJckINEQ:Rk}}{\leq} &   \frac{1}{2} \norm{ x_{k+1} - x_{k} } + \lr{ \abs{1-\frac{c_{k+1}}{c_{k}} }  + 1 - \frac{\lambda_{k}}{2} }\norm{x_{k} - \J_{c_{k} A}x_{k}}+\frac{1}{2}\eta_{k} \norm{e_{k}}\\
	\stackrel{\cref{eq:GPPA}}{\leq} & \frac{\lambda_{k}}{2} \norm{x_{k} - \J_{c_{k} A}x_{k}} + \lr{ \abs{1-\frac{c_{k+1}}{c_{k}} }  + 1 - \frac{\lambda_{k}}{2} }\norm{x_{k} - \J_{c_{k} A}x_{k}}+\eta_{k} \norm{e_{k}}\\
	~=~&\lr{ 1 + \abs{1-\frac{c_{k+1}}{c_{k}}}} \norm{ x_{k} - \J_{c_{k } A}x_{k } } + \eta_{k} \norm{e_{k}},
	\end{align*}
	which follows immediately by the desired inequality in  \cref{lemma:RkJckINEQ:xk+1}. 
	
	\cref{lemma:RkJckINEQ:cknonincreasing}: In view of $(\forall k \in \mathbb{N})$  $c_{k+1} \geq c_{k}$, we establish that 
	\begin{align*}
	\lr{\forall k \in \mathbb{N}} \quad \lr{ 1 + \abs{1-\frac{c_{k+1}}{c_{k}}}} =\frac{c_{k+1}}{c_{k}}.
	\end{align*}
	Hence, the required inequality is immediate from \cref{lemma:RkJckINEQ:xk+1}. 
	
	\cref{lemma:RkJckINEQ:to0}:  In view of $\sum_{k \in \mathbb{N}} \eta_{k} \norm{e_{k}} < \infty$ and $(\forall k \in \mathbb{N})$  $c_{k+1} \geq c_{k} \geq c_{0} >0$, we observe that
	\begin{align*}
	\sum_{k \in \mathbb{N}}  \frac{ \eta_{k} }{ c_{k+1} } \norm{e_{k}}  \leq \sum_{k \in \mathbb{N}}  \frac{ \eta_{k} }{ c_{0} } \norm{e_{k}}   < \infty.
	\end{align*}
	
	Bearing \cref{lemma:RkJckINEQ:cknonincreasing} in mind and applying \cref{fact:alphakINEQ}  with $(\forall k \in \mathbb{N})$ $\alpha_{k} = \frac{1}{c_{k}}  \norm{ x_{k} - \J_{c_{k } A}x_{k } }$,  $\gamma_{k} =\beta_{k} \equiv 0$, and $\varepsilon_{k} = \frac{ \eta_{k} }{ c_{k+1} } \norm{e_{k}}$, we know  that $\lim_{k \to \infty}  \frac{1}{c_{k}}  \norm{ x_{k} - \J_{c_{k } A}x_{k } }$ exists in $\mathbb{R}_{+}$. Combine this with \cref{lemma:GPPAPro}\cref{lemma:GPPAPro:lim}  to conclude that $\lim_{k \to \infty}  \frac{1}{c_{k}} \lr{x_{k} - \J_{c_{k} A}x_{k} } =0$. 	
\end{proof}

\subsection{Exact version of generalized proximal point algorithms} \label{section:exactGPPA}

In this subsection, we consider the sequence generated by \cref{eq:GPPA} with $(\forall k \in \mathbb{N})$ $e_{k} \equiv 0$ and $\eta_{k} \equiv 0$. In fact, we consider the convergence of the exact version of generalized proximal point algorithms.

Note that although in practice we normally can only get the inexact version of generalized proximal point algorithms, by invoking \cref{corollary:stabilityxkconverge},  the weak convergence (resp.\,strong convergence) of the translated exact version of  generalized proximal point algorithms can be extended to the weak convergence (resp.\,strong convergence) of their inexact versions. We shall do this extension in the following subsection.

\begin{proposition} \label{prop:exacGPPAck}
	Suppose that $(\forall k \in \mathbb{N})$ $e_{k} \equiv 0$ and $\eta_{k} \equiv 0$ in \cref{eq:GPPA}.
	Then the following assertions hold. 
	\begin{enumerate}
		\item \label{prop:exacGPPAck:INEQ} $(\forall k \in \mathbb{N})$  $\dist  \lr{0, A\lr{ \J_{c_{k } A}x_{k}}}  \leq  \norm{ \frac{1}{c_{k}} \lr{ \J_{c_{k } A}x_{k} -x_{k} } }$.

		\item \label{prop:exacGPPAck:to0}  Suppose that 	$(\forall k \in \mathbb{N})$  $c_{k+1} \geq c_{k}$ or  that $\sum_{k \in \mathbb{N}} c_{k}^{2}=\infty$ and $(\forall k \in \mathbb{N})$ $\lambda_{k} \equiv 1$. Then $(\forall k \in \mathbb{N}) $ $ \frac{1}{c_{k+1}} \norm{  \J_{c_{k+1 } A}x_{k+1} -x_{k+1} } \leq \frac{1}{c_{k}}  \norm{ \J_{c_{k } A}x_{k} -x_{k} }$.

	\item  \label{prop:exacGPPAck:1/ck} 
	Suppose that $\sum_{k \in \mathbb{N}} \lambda_{k} \lr{2 -\lambda_{k}} = \infty$ and	$(\forall k \in \mathbb{N})$  $c_{k+1} \geq c_{k}$ or  that $\sum_{k \in \mathbb{N}} c_{k}^{2}=\infty$ and $(\forall k \in \mathbb{N})$ $\lambda_{k} \equiv 1$. Then  $\frac{1}{c_{k}} \lr{ \J_{c_{k } A}x_{k} -x_{k} } \to 0$.
	\end{enumerate}
\end{proposition}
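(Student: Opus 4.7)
The plan is to prove the three assertions in order, using (i) as a one-line consequence of the resolvent identity, (ii) as a monotonicity step along iterates (with a split into the two hypothesis cases), and (iii) as an upgrade of $\liminf$ to $\lim$ via (ii).

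For \cref{prop:exacGPPAck:INEQ}, I would simply invoke \cref{corollary:JcA}\cref{corollary:JcA:gra}, which gives $\tfrac{1}{c_{k}}(x_{k}-\J_{c_{k}A}x_{k})\in A(\J_{c_{k}A}x_{k})$; the definition of the distance from $0$ to a set then delivers the asserted bound.

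For \cref{prop:exacGPPAck:to0}, I would handle the two disjuncts in the hypothesis separately. If $c_{k+1}\ge c_{k}$ for all $k$, the inequality is just \cref{lemma:RkJckINEQ}\cref{lemma:RkJckINEQ:cknonincreasing} specialised to $\eta_{k}\equiv 0$. If instead $\lambda_{k}\equiv 1$ and $\sum_{k}c_{k}^{2}=\infty$, then \cref{eq:GPPA} reduces to $x_{k+1}=\J_{c_{k}A}x_{k}$, so \cref{corollary:JcA}\cref{corollary:JcA:gra} applied at indices $k$ and $k+1$ simultaneously yields
\begin{align*}
\tfrac{1}{c_{k}}(x_{k}-x_{k+1})\in A(x_{k+1})\quad\text{and}\quad \tfrac{1}{c_{k+1}}(x_{k+1}-\J_{c_{k+1}A}x_{k+1})\in A(\J_{c_{k+1}A}x_{k+1}).
\end{align*}
The monotonicity of $A$ applied to these two pairs gives
\begin{align*}
\innp{\J_{c_{k+1}A}x_{k+1}-x_{k+1},\,\tfrac{1}{c_{k+1}}(x_{k+1}-\J_{c_{k+1}A}x_{k+1})-\tfrac{1}{c_{k}}(x_{k}-x_{k+1})}\ge 0,
\end{align*}
which after rearrangement and Cauchy--Schwarz reads
\begin{align*}
\tfrac{1}{c_{k+1}}\norm{x_{k+1}-\J_{c_{k+1}A}x_{k+1}}^{2}\le \tfrac{1}{c_{k}}\norm{\J_{c_{k+1}A}x_{k+1}-x_{k+1}}\,\norm{\J_{c_{k}A}x_{k}-x_{k}}.
\end{align*}
Cancelling one factor (the degenerate zero case being trivial) yields the claim.

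For \cref{prop:exacGPPAck:1/ck}, (ii) shows that the nonnegative sequence $\lr{\tfrac{1}{c_{k}}\norm{\J_{c_{k}A}x_{k}-x_{k}}}_{k\in\mathbb{N}}$ is nonincreasing, hence convergent in $\mathbb{R}_{+}$. Under the standing hypotheses of (iii), the assumptions of \cref{lemma:GPPAPro}\cref{lemma:GPPAPro:lim} hold (with $\eta_{k}\equiv 0$ and $e_{k}\equiv 0$; note that $c_{k+1}\ge c_{k}$ forces $\inf_{k}c_{k}\ge c_{0}>0$), so $\liminf_{k\to\infty}\norm{\tfrac{1}{c_{k}}(x_{k}-\J_{c_{k}A}x_{k})}=0$. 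A monotone sequence whose $\liminf$ is $0$ converges to $0$, completing the proof. The only genuinely non-trivial step I anticipate is the monotonicity argument in the second case of (ii); everything else is a direct invocation of results already established.
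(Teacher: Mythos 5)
Your proof is correct and follows essentially the same route as the paper's: \cref{prop:exacGPPAck:INEQ} via \cref{corollary:JcA}\cref{corollary:JcA:gra}, the first case of \cref{prop:exacGPPAck:to0} via \cref{lemma:RkJckINEQ}\cref{lemma:RkJckINEQ:cknonincreasing} with $\eta_{k}\equiv 0$, and \cref{prop:exacGPPAck:1/ck} by combining the monotonicity from \cref{prop:exacGPPAck:to0} with \cref{lemma:GPPAPro}\cref{lemma:GPPAPro:lim}. The only divergence is in the second case of \cref{prop:exacGPPAck:to0}, where the paper simply cites Lemma~2.1 of Dong's \emph{The proximal point algorithm revisited} while you reprove that inequality directly from the graph inclusion of \cref{corollary:JcA}\cref{corollary:JcA:gra}, the monotonicity of $A$, and Cauchy--Schwarz; your computation is valid (and incidentally shows the inequality needs only $\lambda_{k}\equiv 1$, not $\sum_{k\in\mathbb{N}}c_{k}^{2}=\infty$).
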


\begin{proof}

	\cref{prop:exacGPPAck:INEQ}: This follows directly from  \cref{corollary:JcA}\cref{corollary:JcA:gra}.

 	\cref{prop:exacGPPAck:to0}: If $(\forall k \in \mathbb{N})$  $c_{k+1} \geq c_{k}$, applying  \cref{lemma:RkJckINEQ}\cref{lemma:RkJckINEQ:cknonincreasing} with$(\forall k \in \mathbb{N})$ $e_{k} \equiv 0$ and $\eta_{k} \equiv 0$, we deduce $(\forall k \in \mathbb{N}) $ $ \frac{1}{c_{k+1}} \norm{  \J_{c_{k+1 } A}x_{k+1} -x_{k+1} } \leq \frac{1}{c_{k}}  \norm{ \J_{c_{k } A}x_{k} -x_{k} }$.
 	
 	If $\sum_{k \in \mathbb{N}} c_{k}^{2}=\infty$ and $(\forall k \in \mathbb{N})$ $\lambda_{k} \equiv 1$, then via \cite[Lemma~2.1]{DongRevisited2014}, we also obtain $(\forall k \in \mathbb{N}) $ $ \frac{1}{c_{k+1}} \norm{  \J_{c_{k+1 } A}x_{k+1} -x_{k+1} } \leq \frac{1}{c_{k}}  \norm{ \J_{c_{k } A}x_{k} -x_{k} }$.

\cref{prop:exacGPPAck:1/ck}: This is clear from \cref{prop:exacGPPAck:to0} above and \cref{lemma:GPPAPro}\cref{lemma:GPPAPro:lim}.  
\end{proof}

\begin{remark} \label{remark:exacGPPAck}
	Consider \cref{prop:exacGPPAck}\cref{prop:exacGPPAck:1/ck} with the assumption that $\sum_{k \in \mathbb{N}} c_{k}^{2}=\infty$ and $(\forall k \in \mathbb{N})$ $\lambda_{k} \equiv 1$. In view of  \cref{eq:GPPA}, we observe that $(\forall k \in \mathbb{N})$  $x_{k+1} = \J_{c_{k} A}x_{k} $. Taking this and \cref{prop:exacGPPAck}\cref{prop:exacGPPAck:INEQ} into account, we conclude that \cref{prop:exacGPPAck}\cref{prop:exacGPPAck:1/ck} with the assumption that $\sum_{k \in \mathbb{N}} c_{k}^{2}=\infty$ and $(\forall k \in \mathbb{N})$ $\lambda_{k} \equiv 1$  implies  \cite[Proposition~2.1]{DongComment2015}.
\end{remark}

Note that it is possible that $c_{k} \to 0$ in  \cref{prop:weaklyconvergePPA} under the assumption of (A1) although $\inf_{k \in \mathbb{N}}c_{k} >0$ is critical in
many results on the convergence of generalized proximal point algorithms in the literature.
\begin{proposition}  \label{prop:weaklyconvergePPA}
	Suppose that $(\forall k \in \mathbb{N})$ $e_{k} \equiv 0$ and $\eta_{k} \equiv 0$ in \cref{eq:GPPA} and  that one of the following holds.
	\begin{itemize}
		\item[{\rm (A1)}]  $\sum_{k \in \mathbb{N}} c_{k}^{2}=\infty$ and $(\forall k \in \mathbb{N})$ $\lambda_{k} \equiv 1$. 
		
	\item[{\rm (A2)}]  $\sum_{k \in \mathbb{N}} \lambda_{k} \lr{2 -\lambda_{k}} = \infty$, $ \sup_{k \in \mathbb{N}} c_{k} <\infty$, and $(\forall k \in \mathbb{N})$  $c_{k+1} \geq c_{k}$.
	\end{itemize}
	Then $(x_{k})_{k \in \mathbb{N}}$ converges weakly to a point in $\zer A$.
\end{proposition}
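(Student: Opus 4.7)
The plan is to recognize the iteration as a special case of the inexact non-stationary Krasnosel'ski\v{\i}-Mann scheme studied in Section~\ref{sec:ConvergenceKMI}, so that the standard Opial-type machinery of quasi-Fej\'er sequences reduces the problem to verifying that every weak sequential cluster point of $(x_k)_{k\in\mathbb{N}}$ lies in $\zer A$. Concretely, by \cref{fact:cAMaximallymonotone} and \cref{fact:firmlynonexpansiveaveraged}, each $J_{c_k A}$ is firmly nonexpansive, hence $\tfrac{1}{2}$-averaged, and by \cref{fact:FixJcAzerA} the common fixed point set satisfies $\cap_{k\in\mathbb{N}}\Fix J_{c_k A}=\zer A\neq\varnothing$. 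Since $\lambda_k\in[0,2]=[0,1/\alpha_k]$ with $\alpha_k\equiv\tfrac{1}{2}$, and since in the exact case $\sum_{k\in\mathbb{N}}\eta_k\|e_k\|=0<\infty$, \cref{prop:KMBasic:imply}\cref{prop:KMBasic:imply:weaklyconver} (applied with $T_k=J_{c_k A}$) tells us that $(x_k)_{k\in\mathbb{N}}$ will converge weakly to a point in $\zer A$ as soon as $\Omega((x_k)_{k\in\mathbb{N}})\subseteq\zer A$.

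To verify this inclusion, I would first prove that
\[
\frac{1}{c_k}\bigl(x_k-J_{c_k A}x_k\bigr)\to 0.
\]
Under (A1) we have $\lambda_k\equiv 1$, so $\sum_{k\in\mathbb{N}}\lambda_k(2-\lambda_k)=\infty$; together with $\sum_{k\in\mathbb{N}}c_k^2=\infty$, \cref{prop:exacGPPAck}\cref{prop:exacGPPAck:1/ck} (second alternative) yields the required convergence. Under (A2) we have $\sum_{k\in\mathbb{N}}\lambda_k(2-\lambda_k)=\infty$ and $c_{k+1}\geq c_k$, and the same conclusion follows from \cref{lemma:RkJckINEQ}\cref{lemma:RkJckINEQ:to0} (equivalently, from the first alternative of \cref{prop:exacGPPAck}\cref{prop:exacGPPAck:1/ck}).

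Once the residual vanishes, I would invoke \cref{lemma:ykweakcluster} with the sequence $(y_k)_{k\in\mathbb{N}}:=(x_k)_{k\in\mathbb{N}}$. Under (A2), the hypothesis $\sup_{k\in\mathbb{N}}c_k<\infty$ matches assumption~(A1) of that lemma. Under (A1) of the proposition, the relation $\lambda_k\equiv 1$ in \cref{eq:GPPA} reduces the recursion to $x_{k+1}=J_{c_k A}x_k$, which matches assumption~(A2) of that lemma. In either case, \cref{lemma:ykweakcluster} delivers $\Omega((x_k)_{k\in\mathbb{N}})\subseteq\zer A$, completing the proof.

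There is essentially no serious obstacle: the argument is a clean assembly of the preparatory lemmas. The only mildly delicate point is to notice that the two alternative hypotheses (A1) and (A2) of the proposition are designed precisely to trigger the two different alternatives available in \cref{prop:exacGPPAck}\cref{prop:exacGPPAck:1/ck} and in \cref{lemma:ykweakcluster}, and in particular that under (A2) one does \emph{not} need $\inf_{k\in\mathbb{N}}c_k>0$---boundedness of $(c_k)_{k\in\mathbb{N}}$ together with monotonicity is what drives the weak cluster point analysis in \cref{lemma:ykweakcluster} through its case~(A1).
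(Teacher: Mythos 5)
Your proposal is correct and follows essentially the same route as the paper: establish the residual convergence $\tfrac{1}{c_k}(x_k-\J_{c_k A}x_k)\to 0$ via \cref{prop:exacGPPAck}\cref{prop:exacGPPAck:1/ck} under either hypothesis, feed this into \cref{lemma:ykweakcluster} (matching (A2) of the proposition to that lemma's boundedness case and (A1) to its $x_{k+1}=\J_{c_kA}x_k$ case) to get $\Omega((x_k)_{k\in\mathbb{N}})\subseteq\zer A$, and conclude by the quasi-Fej\'er/Opial argument. The only cosmetic difference is that you route the final step through \cref{prop:KMBasic:imply} while the paper cites \cref{lemma:GPPAPro}\cref{lemma:GPPAPro:exists} together with \cite[Lemma~2.47]{BC2017} directly; these rest on the same underlying machinery.
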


\begin{proof}
	Applying  \cref{lemma:GPPAPro}\cref{lemma:GPPAPro:exists} with $(\forall k \in \mathbb{N})$ $\eta_{k} \equiv 0$ and $e_{k} \equiv 0$, we deduce  that 
	\begin{align}\label{eq:prop:weaklyconvergeckto0:exists}
	\lr{\forall \bar{x} \in \zer A} \quad \lim_{k \to \infty} \norm{x_{k} -\bar{x}}  \text{ exists in } \mathbb{R}_{+}.
	\end{align}

	Combine  	\cref{prop:exacGPPAck}\cref{prop:exacGPPAck:1/ck}   with  our assumption  (A1) or (A2) to establish that $\frac{1}{c_{k}} \lr{x_{k} -\J_{c_{k} A}x_{k}} \to 0$,
	which, connected with \cref{lemma:ykweakcluster}, entails that 
	\begin{align} \label{eq:prop:weaklyconvergeckto0:subseteq}
  \Omega \lr{ \lr{x_{k}}_{k \in \mathbb{N}} } \subseteq \zer A.
	\end{align} 
	Altogether, recall our assumption that $\zer A \neq \varnothing$ and  invoke \cref{eq:prop:weaklyconvergeckto0:exists}, \cref{eq:prop:weaklyconvergeckto0:subseteq}, and \cite[Lemma~2.47]{BC2017} to reach the desired weak convergence of $(x_{k})_{k \in \mathbb{N}}$. 
\end{proof}

The following result illustrates a $R$-linear convergence result of the exact version of  generalized proximal point algorithms  and will be used to prove the strong convergence of the inexact version of   generalized proximal point algorithms in the next subsection. 
\begin{theorem} \label{theorem:exactGPPA}
	Suppose that $(\forall k \in \mathbb{N})$ $e_{k} \equiv 0$ and $\eta_{k} \equiv 0$ in \cref{eq:GPPA}. 	Suppose that $(\forall k \in \mathbb{N})$ $\lambda_{k} \in \left]0,2\right[\,$ and  that 
	$c:=\inf_{k \in \mathbb{N}} c_{k} >0$. 
	Then the following statements hold. 
	\begin{enumerate}
		\item  \label{theorem:exactGPPA:Fejer} $\lr{x_{k}}_{k \in \mathbb{N}}$ is Fej\'er monotone with respect to $\zer A$.
		\item  \label{theorem:exactGPPA:MSubR} Let $\bar{x} \in \zer A$.  Suppose that  $A$ is metrically subregular at $\bar{x} $ for $0 \in A\bar{x}$, i.e., 
		\begin{align} \label{eq:theorem:exactGPPA:MetricSub} 
		(\exists \kappa >0) (\exists \delta >0) (\forall x \in B[\bar{x}; \delta]) \quad \dist \lr{x, A^{-1}0} \leq \kappa \dist \lr{0, Ax}.
		\end{align}
		Suppose that  $x_{0} \in B[\bar{x};\delta]$.  Define $(\forall k \in \mathbb{N})$ $\gamma_{k}:=  1 + \frac{\kappa}{c_{k} }$ and $\rho_{k}:=\lr{  1- \frac{\lambda_{k}  \lr{2-\lambda_{k} } }{  \gamma_{k}^{2}} }^{\frac{1}{2}}$. Then the following hold. 
		\begin{enumerate}
			\item \label{theorem:exactGPPA:Distance} $\lr{\forall k \in \mathbb{N}}$ $x_{k} \in B[\bar{x};\delta]$ and $  \dist  \lr{x_{k+1}, \zer A} \leq \rho_{k} \dist  \lr{x_{k}, \zer A }$.
			\item \label{theorem:exactGPPA:moreassum} Suppose that $0< \underline{\lambda}:=\liminf_{k \to \infty} \lambda_{k} \leq \overline{\lambda}:=\limsup_{k \to \infty} \lambda_{k} <2$.
			Define $\rho:=\limsup_{k \to \infty} \rho_{k}$ and $\gamma :=\limsup_{k  \to \infty}\gamma_{k} $.
			Then the following hold. 
			\begin{enumerate}
				\item  $0\leq \rho \leq \lr{1 - \frac{\underline{\lambda} \lr{ 2-\overline{\lambda}}}{  \gamma^{2}}}^{\frac{1}{2}} <  \frac{1}{ \lr{  1 + \frac{\underline{\lambda} \lr{ 2-\overline{\lambda}}}{  \gamma^{2}} }^{\frac{1}{2}}  }  <1$.
				
				\item  There exist $K \in \mathbb{N}$ and $\mu \in \left]\rho, 1\right[$ such that   $\lr{ \forall k \geq K}$
				$  \dist  \lr{x_{k+1}, \zer A}  \leq \mu \dist  \lr{x_{k}, \zer A}$.
				\item There exist $\hat{x} \in \zer A$,  $K \in \mathbb{N}$,  and $\mu \in \left]\rho, 1\right[$  such that  
				\begin{align*}
				(\forall k \geq K) \quad \norm{x_{k} -\hat{x}} \leq 2 \mu^{k-K} \dist \lr{x_{K}, \zer A}.
				\end{align*}
				Consequently, $\lr{x_{k}}_{k \in \mathbb{N}}$ converges $R$-linearly to a point $\hat{x} \in \zer A$.
				In addition, if $\zer A$ is affine, then $\hat{x} =\Pro_{\zer A}x_{0}$.
			\end{enumerate}
			
		\end{enumerate} 
	\end{enumerate} 
	
\end{theorem}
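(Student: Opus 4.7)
The approach is to run the argument of \cref{theorem:KMI} with the choice $T_k := \J_{c_k A}$, which by \cref{fact:cAMaximallymonotone} and \cref{fact:firmlynonexpansiveaveraged} is $\tfrac{1}{2}$-averaged on $\mathcal{H}$ and whose common fixed-point set coincides with $\zer A$ by \cref{fact:FixJcAzerA}. The only step that is not a direct specialization is that metric subregularity is hypothesized on $A$ rather than on $\Id - T_k$, so it must first be transferred using \cref{lemma:metricallysubregularEQ}\cref{lemma:metricallysubregularEQ:A}.

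For \cref{theorem:exactGPPA:Fejer}, with $e_k \equiv 0 \equiv \eta_k$ one has $x_{k+1} = y_k$ in the notation of \cref{lemma:GPPAPro}, so \cref{lemma:GPPAPro}\cref{lemma:GPPAPro:yk} yields $\norm{x_{k+1} -\bar{x}} \leq \norm{x_k -\bar{x}}$ for every $\bar{x} \in \zer A$, which is Fejér monotonicity. For \cref{theorem:exactGPPA:Distance}, the inclusion $x_k \in B[\bar{x};\delta]$ is immediate by induction from \cref{theorem:exactGPPA:Fejer} and $x_0 \in B[\bar{x};\delta]$; for the contractive estimate, \cref{lemma:metricallysubregularEQ}\cref{lemma:metricallysubregularEQ:A} lifts the subregularity of $A$ at $\bar{x}$ with constant $\kappa$ and neighborhood $B[\bar{x};\delta]$ to subregularity of $\Id - \J_{c_k A}$ at $\bar{x}$ for $0$ with constant $\gamma_k = 1+\kappa/c_k$ on the same ball. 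Applying \cref{proposition:TFMetricSubreg}\cref{proposition:TFMetricSubreg:B} with $T = \J_{c_k A}$, $\alpha = \tfrac{1}{2}$, and $\lambda = \lambda_k$ (so that $F_{\lambda_k} x_k = x_{k+1}$ and $\Fix T = \zer A$) then produces the required $\rho_k$-contraction with precisely $\rho_k = (1-\lambda_k(2-\lambda_k)/\gamma_k^2)^{1/2}$.

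For \cref{theorem:exactGPPA:moreassum}, the bound on $\rho$ combines $c = \inf_k c_k > 0$ (which gives $\gamma = 1+\kappa/c < \infty$) with $\liminf_k \lambda_k(2-\lambda_k) \geq \underline{\lambda}(2-\overline{\lambda}) > 0$, together with the elementary inequality $1-t < 1/(1+t) < 1$ for $t > 0$. Choosing any $\mu \in \left]\rho,1\right[\,$ and taking $K$ so that $\rho_k \leq \mu$ for $k \geq K$, iterating the estimate of \cref{theorem:exactGPPA:Distance} yields the eventual geometric decay of $\dist(x_k, \zer A)$. Since $\zer A$ is closed and convex by \cref{fact:MaximallyMonotoneCC}, Fejér monotonicity together with this decay and \cite[Theorem~5.12]{BC2017} yields the stated bound and hence $R$-linear convergence to some $\hat{x} \in \zer A$; if $\zer A$ is affine, then \cite[Proposition~5.9(ii)]{BC2017} identifies $\hat{x} = \Pro_{\zer A} x_0$.

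The main obstacle is purely bookkeeping: composing \cref{lemma:metricallysubregularEQ} and \cref{proposition:TFMetricSubreg} correctly so that the right $\gamma_k$ and $\rho_k$ emerge, and then passing to $\limsup$ while preserving the strict inequality chain. There is essentially no new analytic content beyond what already appeared in the proof of \cref{theorem:KMI}: this result is a specialization of that theorem plus one subregularity transfer step via \cref{lemma:metricallysubregularEQ}\cref{lemma:metricallysubregularEQ:A}.
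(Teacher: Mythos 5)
Your proposal is correct and follows essentially the same route as the paper: specialize the non-stationary Krasnosel'ski\v{\i}-Mann result to $T_k = \J_{c_k A}$ (which is $\tfrac12$-averaged with $\Fix \J_{c_k A} = \zer A$) and transfer metric subregularity from $A$ to $\Id - \J_{c_k A}$ via \cref{lemma:metricallysubregularEQ}\cref{lemma:metricallysubregularEQ:A}, yielding the constants $\gamma_k = 1+\kappa/c_k$. The only cosmetic difference is that you unfold some steps of \cref{theorem:KMI} (Fej\'er monotonicity via \cref{lemma:GPPAPro}, the contraction via \cref{proposition:TFMetricSubreg} directly) where the paper simply cites \cref{theorem:KMI}\cref{theorem:KMI:Fejer} and \cref{theorem:KMI}\cref{theorem:KMI:MSubR}.
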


\begin{proof}
	According to \Cref{fact:cAMaximallymonotone,fact:FixJcAzerA},  $(\forall k \in \mathbb{N})$ $\J_{c_{k} A}$ is $\frac{1}{2}$-averaged operator and 
	\begin{align*}
	(\forall k \in \mathbb{N}) \quad \lr{\Id -\J_{c_{k} A}  }^{-1}0 = \Fix \J_{c_{k} A} =\zer A.
	\end{align*}
	
	\cref{theorem:exactGPPA:Fejer}:  
	Apply \cref{theorem:KMI}\cref{theorem:KMI:Fejer} with $C=\zer A $ and $(\forall k \in \mathbb{N})$ $T_{k} =\J_{c_{k} A} $ and $\alpha_{k} =\frac{1}{2}$ to deduce 	\cref{theorem:exactGPPA:Fejer}.
	
	\cref{theorem:exactGPPA:MSubR}:  Invoking \cref{eq:theorem:exactGPPA:MetricSub} and for every $k \in \mathbb{N}$, applying \cref{lemma:metricallysubregularEQ}\cref{lemma:metricallysubregularEQ:A} with $\gamma=c_{k}$, we know that 	$(\forall k \in \mathbb{N})$ $  \Id -\J_{c_{k} A}  $ is metrically subregular at $\bar{x}$ for $0 = \lr{\Id -\J_{c_{k} A} } \bar{x}$, that is,   
	\begin{align*} 
\lr{\forall  x \in B[\bar{x}; \delta]} \quad 	\dist \lr{x, \lr{\Id -\J_{c_{k} A}}^{-1}0} \leq \lr{1 + \frac{\kappa}{c_{k} }}  \norm{ x -\J_{c_{k}  A} x}.
	\end{align*}	
	Notice  that  $ \gamma =\limsup_{k \to \infty }\gamma_{k}  \leq \lr{1 +\frac{\kappa}{c}}< \infty$. 
	Therefore, applying \cref{theorem:KMI}\cref{theorem:KMI:MSubR} with $(\forall k \in \mathbb{N})$ $T_{k}=\J_{c_{k} A}$, $\alpha_{k}=\frac{1}{2}$, $\Fix T_{k}=\zer A$, and $\delta_{k}=\delta$,  we easily obtain the desired results in \cref{theorem:exactGPPA:MSubR}. 
\end{proof}

\subsection{Weak and strong convergence of generalized proximal point algorithms} \label{section:weakstrongGPPA}

Recall that 
$A :\mathcal{H} \to 2^{\mathcal{H}}$ is maximally monotone with $\zer A \neq \varnothing$  and that 
\begin{align}\label{eq:section:weakstrongGPPAxk}
(\forall k \in \mathbb{N}) \quad x_{k+1} = \lr{1-\lambda_{k}}x_{k} +\lambda_{k} \J_{c_{k} A}x_{k} +\eta_{k}e_{k},
\end{align}
where $x_{0} \in \mathcal{H}$ and $(\forall k \in \mathbb{N})$ $\lambda_{k} \in \left[0,2\right]$, $c_{k} \in \mathbb{R}_{++}$, $\eta_{k} \in \mathbb{R}_{+}$, and $e_{k} \in \mathcal{H}$.

In this section, we consider  the weak and strong convergence of the sequence generated by conforming to the iteration scheme \cref{eq:section:weakstrongGPPAxk} for solving monotone inclusion problems.

Let $c \in \mathbb{R}_{++}$. It is clear that 
\begin{align*}
\sum_{k \in \mathbb{N}} \abs{ \frac{c_{k}}{c} -1} <\infty \Rightarrow c_{k} \to c \Rightarrow \inf_{k \in \mathbb{N}} c_{k} >0.
\end{align*}
Therefore,  assumptions in \cref{theorem:JckAweakconverckc} actually implies the popular assumption $\inf_{k \in \mathbb{N}} c_{k} >0$ for the convergence of generalized proximal point algorithms.

\begin{theorem} \label{theorem:JckAweakconverckc}
	Suppose that $(\forall k \in \mathbb{N})$  $\lambda_{k} \in \left]0,2\right]$ and $\sum_{k \in \mathbb{N}} \eta_{k} \norm{e_{k}} < \infty$. Suppose that there exists $c \in \mathbb{R}_{++}$ such that $\sum_{k \in \mathbb{N}} \abs{ \frac{c_{k}}{c} -1} <\infty$ $($e.g., $(\forall k \in \mathbb{N})$ $c_{k} \equiv c \in \mathbb{R}_{++}$$)$.	
	Then the following statements hold. 
	\begin{enumerate}
		\item \label{theorem:JckAweakconverckc:iff} 	Define   
		\begin{align*}
		(\forall i \in \mathbb{N}) \lr{\forall k \in \mathbb{N} } \quad	& \xi_{k+1}(i)=\lr{(1-\lambda_{k +i})  \Id  + \lambda_{k+i} \J_{c A} } \xi_{k}(i) \text{ and } \xi_{0}(i)=x_{i}. 
		\end{align*}
		Suppose that for every $ i \in \mathbb{N}$,
		$(\xi_{k}(i))_{k \in \mathbb{N}}$ weakly   converges to a point $\xi(i) \in \mathcal{H}$. Then the following hold. 
		\begin{enumerate}
			\item  There exists a point $\bar{\xi} \in \mathcal{H}$ such that $(\xi(i))_{i \in \mathbb{N}}$ strongly converges to $\bar{\xi}$.
			\item  $(x_{k})_{k\in \mathbb{N}}$ weakly  converges to $\bar{\xi} = \lim_{i \to \infty} \xi(i)$.  
			\item  Suppose  that $(\forall i \in \mathbb{N})$ $(\xi_{k}(i))_{k \in \mathbb{N}}$   converges strongly to a point $\xi(i) \in \mathcal{H}$. Then  $(x_{k})_{k\in \mathbb{N}}$   converges strongly to $\bar{\xi} = \lim_{i \to \infty} \xi(i)$. 
		\end{enumerate}
		
		\item  \label{theorem:JckAweakconverckc:weak} Suppose that $\sum_{k \in \mathbb{N}} \lambda_{k} \lr{2-\lambda_{k}} = \infty$.  Then $(x_{k})_{k \in \mathbb{N}} $ converges weakly to a point in $\zer A$.
	\end{enumerate}
	
\end{theorem}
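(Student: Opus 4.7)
The plan is to derive both parts from the stability result \cref{theorem:GkWeakStrongConvergence} by putting the constant regularization $c$ into the ``basic'' iteration and the variable $c_{k}$ into the ``perturbed'' one. Concretely, I set
\[
(\forall k \in \mathbb{N}) \quad F_{k} := (1-\lambda_{k})\Id + \lambda_{k}\J_{c_{k}A}, \qquad G_{k} := (1-\lambda_{k})\Id + \lambda_{k}\J_{cA},
\]
so that \cref{eq:section:weakstrongGPPAxk} becomes $x_{k+1} = F_{k}x_{k} + \eta_{k}e_{k}$, while $(G_{k})_{k\in\mathbb{N}}$ generates exactly the translated basic sequences in \cref{theorem:JckAweakconverckc:iff}. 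By \cref{fact:cAMaximallymonotone,fact:firmlynonexpansiveaveraged}, every $\J_{c_{k}A}$ and $\J_{cA}$ is $\tfrac{1}{2}$-averaged, so \cref{fact:Averagedlambdaalpha} makes each $F_{k}$ and each $G_{k}$ $\tfrac{\lambda_{k}}{2}$-averaged, hence nonexpansive; in particular $F_{k}$ is $(1+0)$-Lipschitz. Since $\lambda_{k}\in\left]0,2\right]$, \cref{fact:FixJcAzerA} gives $\Fix G_{k} = \zer A$ for every $k$, whence $\cap_{k\in\mathbb{N}} \Fix G_{k} = \zer A \neq \varnothing$. For any $\bar{x}\in\zer A$, $F_{k}\bar{x}=\bar{x}=G_{k}\bar{x}$, so $\sum_{k}\|F_{k}\bar{x}-G_{k}\bar{x}\| = 0 < \infty$; together with $\sum_{k}\eta_{k}\|e_{k}\|<\infty$, \cref{theorem:GkWeakStrongConvergence}\cref{theorem:GkWeakStrongConvergence:bounded} yields boundedness of $(x_{k})_{k\in\mathbb{N}}$.

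The critical step is to verify
\[
\sum_{k\in\mathbb{N}}\|F_{k}x_{k}-G_{k}x_{k}\| \;=\; \sum_{k\in\mathbb{N}}\lambda_{k}\|\J_{c_{k}A}x_{k}-\J_{cA}x_{k}\| \;<\; \infty,
\]
which is the summability condition demanded by \cref{theorem:GkWeakStrongConvergence}\cref{theorem:GkWeakStrongConvergence:xkconverges}. For this, I apply \cref{corollary:JcA}\cref{corollary:JcA:mulambda} with $\mu = c$ and $\gamma = c_{k}$ to rewrite $\J_{c_{k}A}x_{k} = \J_{cA}\bigl(\tfrac{c}{c_{k}}x_{k} + (1-\tfrac{c}{c_{k}})\J_{c_{k}A}x_{k}\bigr)$ and then invoke nonexpansiveness of $\J_{cA}$ to obtain
\[
\|\J_{c_{k}A}x_{k}-\J_{cA}x_{k}\| \;\leq\; \left|1-\tfrac{c}{c_{k}}\right|\,\|\J_{c_{k}A}x_{k}-x_{k}\|.
\]
Because $(x_{k})$ is bounded and $\|\J_{c_{k}A}x_{k}-x_{k}\|\leq 2\|x_{k}-\bar{x}\|$ (using $\J_{c_{k}A}\bar{x}=\bar{x}$ and nonexpansiveness of $\J_{c_{k}A}$), the second factor is uniformly bounded by some $M$. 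The hypothesis $\sum_{k}|c_{k}/c-1|<\infty$ forces $c_{k}\to c>0$, so eventually $c_{k}\geq c/2$ and $|1-c/c_{k}| = |c_{k}-c|/c_{k} \leq 2|c_{k}/c-1|$; summability therefore transfers, giving finiteness of the sum. Consequently \cref{theorem:GkWeakStrongConvergence}\cref{theorem:GkWeakStrongConvergence:xkconverges} closes \cref{theorem:JckAweakconverckc:iff}.

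For \cref{theorem:JckAweakconverckc:weak}, it suffices to check the hypothesis of \cref{theorem:JckAweakconverckc:iff}: for every fixed $i\in\mathbb{N}$ the sequence $(\xi_{k}(i))_{k\in\mathbb{N}}$ is the classical Krasnosel'ski\v{\i}--Mann iteration for the $\tfrac{1}{2}$-averaged operator $\J_{cA}$ with relaxation parameters $(\lambda_{k+i})_{k\in\mathbb{N}}$, and the divergence $\sum_{k}\lambda_{k}(2-\lambda_{k})=\infty$ is preserved under the index shift. So \cref{fact:KMIterations} with $\alpha = \tfrac{1}{2}$ yields $\xi_{k}(i)\weakly \xi(i)\in\Fix \J_{cA}=\zer A$. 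Applying \cref{theorem:JckAweakconverckc:iff} then gives $x_{k}\weakly \bar{\xi}=\lim_{i\to\infty}\xi(i)$, and closedness of $\zer A$ (\cref{fact:MaximallyMonotoneCC}) forces $\bar{\xi}\in\zer A$. The main obstacle I anticipate is the resolvent-identity bookkeeping that bounds $\|\J_{c_{k}A}x_{k}-\J_{cA}x_{k}\|$ by $|1-c/c_{k}|\cdot\|\J_{c_{k}A}x_{k}-x_{k}\|$ together with the summability chain $\sum|1-c/c_{k}|\leq 2\sum|c_{k}/c-1|<\infty$; once that is secured, everything reduces to a clean application of \cref{theorem:GkWeakStrongConvergence}.
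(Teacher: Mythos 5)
Your proposal is correct and follows essentially the same route as the paper: the same splitting into $F_{k}=(1-\lambda_{k})\Id+\lambda_{k}\J_{c_{k}A}$ and $G_{k}=(1-\lambda_{k})\Id+\lambda_{k}\J_{cA}$, the same use of \cref{corollary:JcA}\cref{corollary:JcA:mulambda} plus boundedness of $(x_{k})_{k\in\mathbb{N}}$ to verify $\sum_{k}\lambda_{k}\norm{\J_{c_{k}A}x_{k}-\J_{cA}x_{k}}<\infty$, and the same appeal to the stability result (the paper routes this through \cref{theorem:TkxikConver}, which is itself just \cref{theorem:GkWeakStrongConvergence} in this setting). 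The only cosmetic difference is that you apply the resolvent identity in the mirrored direction, obtaining the factor $\abs{1-c/c_{k}}$ instead of the paper's $\abs{1-c_{k}/c}$, which you correctly reconcile via $c_{k}\to c>0$; this changes nothing essential.
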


\begin{proof}
	Define  
	\begin{align*}
\lr{\forall k \in \mathbb{N}} \quad 	G_{k}:=  (1-\lambda_{k})  \Id  + \lambda_{k} \J_{c A}     \text{ and }  F_{k}:= (1-\lambda_{k}) \Id + \lambda_{k}\J_{c_{k}  A}. 
	\end{align*}
	In view of \Cref{fact:cAMaximallymonotone,fact:firmlynonexpansiveaveraged,fact:FixJcAzerA}, we know that 
	$\J_{c A}  $ and  $(\forall k \in \mathbb{N})$   $\J_{c_{k}  A}$ are $\frac{1}{2}$-averaged and that 
	\begin{align} \label{eq:theorem:JckAweakconverckc:Fix}
	\Fix \J_{c A}  = \zer A \neq \varnothing.
	\end{align}
	Moreover, via \cref{fact:Averagedlambdaalpha},  we know that $(\forall k \in \mathbb{N})$    $	G_{k}$ is   nonexpansive.  
	
	Employing \cref{corollary:JcA}\cref{corollary:JcA:mulambda} in the first equality and utilizing the  nonexpansiveness of $  \J_{c_{k} A}$ in the inequality below, we observe that for every $x \in \mathcal{H}$ and every $k \in \mathbb{N}$,
	\begin{subequations} \label{eq:theorem:JckAweakconverckc:x}
		\begin{align}
		\norm{	\J_{c_{k} A}x - 	\J_{c A}x  } &=  \norm{ \J_{c_{k} A}x  - \J_{c_{k}  A} \left( \frac{c_{k}}{c}x + \left( 1- \frac{c_{k}}{c} \right)  \J_{c A}x  \right) }\\
		& \leq  \norm{x -\frac{c_{k}}{c}x - \left( 1- \frac{c_{k}}{c} \right)  \J_{c A}x }\\
	&	=  \abs{1 -  \frac{c_{k}}{c}  } \norm{x -  \J_{c A}x }.
		\end{align}
	\end{subequations}
	For every $k \in \mathbb{N}$, applying \cref{eq:theorem:JckAweakconverckc:x} with  $x =x_{k}$ and invoking \cref{lemma:JGammaAFix} in the first and second inequalities below, respectively, we establish that  
	\begin{align*}
\lr{\forall z \in \zer A} \lr{\forall k \in \mathbb{N}  } \quad 	\norm{ 	\J_{c_{k} A}x_{k} - 	\J_{c A}x_{k} } \leq \abs{1 -  \frac{c_{k}}{c}  }  \norm{x_{k} -  \J_{c A}x_{k} } \leq  \abs{1 -  \frac{c_{k}}{c}  }  \norm{x_{k} - z},
	\end{align*}
	which, connected with the assumptions that $\sum_{k \in \mathbb{N}} \abs{ \frac{c_{k}}{c} -1} <\infty$ and $(\forall k \in \mathbb{N})$  $\lambda_{k} \in \left]0,2\right]$, yields that 
	\begin{align}  \label{eq:theorem:JckAweakconverckc:JckJc}
	\sum_{k \in \mathbb{N}} \lambda_{k} \norm{ 	\J_{c_{k} A}x_{k} - 	\J_{c A}x_{k} } < \infty,
	\end{align} 
	since, via  \cite[Proposition~3.3(ii)]{OuyangWeakStrongGPPA2021}, we know that $(x_{k})_{k \in \mathbb{N}}$ is bounded. 
	  
	Taking \cref{eq:theorem:JckAweakconverckc:JckJc} and \cref{eq:theorem:JckAweakconverckc:Fix} into account and applying 
	\cref{theorem:TkxikConver}\cref{theorem:TkxikConver:general}  and \cref{theorem:TkxikConver}\cref{theorem:TkxikConver:Fk}, respectively, with $T=	\J_{c  A}$,    $\alpha =\frac{1}{2}$, and $(\forall k \in \mathbb{N})$ $T_{k} = 	\J_{c_{k} A}$, we obtain the required results in \cref{theorem:JckAweakconverckc:iff} and  \cref{theorem:JckAweakconverckc:weak}.
\end{proof}

In the following result, we deduce some weak convergence results on the inexact version of generalized proximal point algorithms from the corresponding results on the exact version of generalized proximal point algorithms.
\begin{theorem} \label{theorem:GPPAWeakStability}
	Suppose that  $\sum_{k \in \mathbb{N}} \eta_{k} \norm{e_{k}} < \infty$ and that  one of the following assumptions hold.
	\begin{itemize}
		\item[{\rm (A1)}]  $\sum_{k \in \mathbb{N}} c_{k}^{2}=\infty$ and $(\forall k \in \mathbb{N})$ $\lambda_{k} \equiv 1$. 
		\item[{\rm (A2)}]  $\sum_{k \in \mathbb{N}} \lambda_{k} \lr{2 -\lambda_{k}} = \infty$, $ \sup_{k \in \mathbb{N}} c_{k} <\infty$, and $(\forall k \in \mathbb{N})$  $\lambda_{k} \in \left]0,2\right]$ and  $c_{k+1} \geq c_{k}$. 
	\end{itemize}
	Then $(x_{k})_{k \in \mathbb{N}}$ converges weakly to a point in $\zer A$.
\end{theorem}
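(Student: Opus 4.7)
The plan is to reduce the claim to the exact case handled by \cref{prop:weaklyconvergePPA} via the stability machinery in \cref{corollary:stabilityxkconverge}. Define
\begin{align*}
(\forall k \in \mathbb{N}) \quad G_{k} := (1-\lambda_{k}) \Id + \lambda_{k} \J_{c_{k} A}.
\end{align*}
Because $A$ is maximally monotone, \cref{fact:cAMaximallymonotone} and \cref{fact:firmlynonexpansiveaveraged} give that each $\J_{c_{k} A}$ is $\tfrac{1}{2}$-averaged with full domain. Since $\lambda_{k} \in \left[0,2\right]$, \cref{fact:Averagedlambdaalpha} applied with $\alpha=\tfrac{1}{2}$ and $\lambda=\lambda_{k}$ shows that $G_{k}$ is $\tfrac{\lambda_{k}}{2}$-averaged, and in particular nonexpansive. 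With this notation, the iteration \cref{eq:section:weakstrongGPPAxk} reads $x_{k+1} = G_{k} x_{k} + \eta_{k} e_{k}$, which is exactly the form to which \cref{corollary:stabilityxkconverge} applies.

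Next, I would set up the translated basic method
\begin{align*}
(\forall i \in \mathbb{N})(\forall k \in \mathbb{N}) \quad \xi_{k+1}(i) = G_{k+i}\, \xi_{k}(i), \qquad \xi_{0}(i) = x_{i}.
\end{align*}
For each fixed $i$, the sequence $(\xi_{k}(i))_{k \in \mathbb{N}}$ is the \emph{exact} generalized proximal point iteration driven by parameters $(\lambda_{k+i})_{k \in \mathbb{N}}$ and $(c_{k+i})_{k \in \mathbb{N}}$. The key (and essentially only) thing to verify is that the hypothesis in force, either (A1) or (A2), is preserved under this index shift: under (A1) one has $\sum_{k \in \mathbb{N}} c_{k+i}^{2} = \infty$ and $\lambda_{k+i} \equiv 1$; under (A2) one has $\sum_{k \in \mathbb{N}} \lambda_{k+i}(2-\lambda_{k+i}) = \infty$, $\sup_{k \in \mathbb{N}} c_{k+i} < \infty$, and $(c_{k+i})_{k \in \mathbb{N}}$ is still nondecreasing. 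Hence \cref{prop:weaklyconvergePPA} applies to $(\xi_{k}(i))_{k \in \mathbb{N}}$ and yields a point $\xi(i) \in \zer A$ with $\xi_{k}(i) \weakly \xi(i)$.

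To close, I would invoke \cref{fact:MaximallyMonotoneCC} to note that $C := \zer A$ is nonempty, closed, and convex. Combining the assumption $\sum_{k \in \mathbb{N}} \eta_{k} \norm{e_{k}} < \infty$ with the weak convergence of every translated sequence $(\xi_{k}(i))_{k \in \mathbb{N}}$ into $C$, \cref{corollary:stabilityxkconverge} applied with the nonexpansive operators $G_{k}$, error terms $\eta_{k} e_{k}$, and target set $C = \zer A$ delivers the desired weak convergence of $(x_{k})_{k \in \mathbb{N}}$ to a point in $\zer A$.

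There is essentially no serious obstacle here: the work is conceptual rather than computational, and the whole argument is a clean application of the translated-basic-method principle developed in \cref{section:PerturbedApproximate}. The only place one must be a little careful is in verifying that (A1) and (A2) are invariant under index shift, which is immediate in both cases. This illustrates the general message of the paper: once the exact dynamics are understood (here \cref{prop:weaklyconvergePPA}), the corresponding perturbed/inexact dynamics follow automatically from \cref{corollary:stabilityxkconverge} provided only that $\sum \eta_{k} \norm{e_{k}} < \infty$ and the target set is closed.
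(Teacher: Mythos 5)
Your proposal is correct and follows essentially the same route as the paper's own proof: rewrite the iteration as $x_{k+1}=G_{k}x_{k}+\eta_{k}e_{k}$ with $G_{k}=(1-\lambda_{k})\Id+\lambda_{k}\J_{c_{k}A}$ nonexpansive, check that (A1) and (A2) are invariant under the index shift so that \cref{prop:weaklyconvergePPA} gives weak convergence of each translated sequence into the closed set $\zer A$, and conclude via \cref{corollary:stabilityxkconverge}. No gaps.
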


\begin{proof}
	Define 
	\begin{align*}
  \lr{\forall k \in \mathbb{N}} \quad   G_{k} := \lr{1-\lambda_{k}} \Id +\lambda_{k} \J_{c_{k} A} \quad \text{and} \quad (\forall i \in \mathbb{N})	~  \xi_{k+1}(i)=G_{k+i}  \xi_{k}(i) \text{ and } \xi_{0}(i)=x_{i}. 
	\end{align*}
	
	Then  \cref{eq:section:weakstrongGPPAxk} becomes
	\begin{align*}
(\forall k \in \mathbb{N}) \quad	x_{k+1}   = \lr{1-\lambda_{k}}x_{k} +\lambda_{k} \J_{c_{k} A}x_{k} +\eta_{k}e_{k} 
	 =G_{k} x_{k} +\eta_{k}e_{k}.
	\end{align*}
	Taking  \Cref{fact:cAMaximallymonotone,fact:firmlynonexpansiveaveraged,fact:Averagedlambdaalpha,fact:FixJcAzerA} into account, we know that $(\forall k \in \mathbb{N})$ $G_{k}$ is nonexpansive and that
	\begin{align*}
	(\forall k \in \mathbb{N}) \quad \Fix G_{k} =  \Fix \J_{c_{k} A} = \zer A \neq \varnothing,
	\end{align*}
	which, combined with  \cref{fact:MaximallyMonotoneCC}, entails that $\cap_{k \in \mathbb{N}} \Fix G_{k} =\zer A$ is nonempty closed and convex.

	In addition, notice that the sequence $\lr{G_{k}}_{k \in \mathbb{N}}$ of iteration mappings  depends only on $A$, $\lr{\lambda_{k}}_{k \in \mathbb{N}}$, and $\lr{c_{k}}_{k \in \mathbb{N}}$, and that for every $i \in \mathbb{N}$,
	\begin{align*}
	& \sum_{k \in \mathbb{N}} c_{k}^{2}=\infty \Leftrightarrow \sum_{k \in \mathbb{N}} c_{k+i}^{2}=\infty; \\
	& (\forall k \in \mathbb{N}) ~\lambda_{k} \equiv 1  \Rightarrow  (\forall k \in \mathbb{N}) ~\lambda_{k+i} \equiv 1; \\
	&\sum_{k \in \mathbb{N}} \lambda_{k} \lr{2 -\lambda_{k}} = \infty \Leftrightarrow \sum_{k \in \mathbb{N}} \lambda_{k+i} \lr{2 -\lambda_{k+i}} = \infty;  \\
	&  \sup_{k \in \mathbb{N}} c_{k} <\infty  \Leftrightarrow \sup_{k \in \mathbb{N}} c_{k+i} <\infty;   \\
	& (\forall k \in \mathbb{N})~c_{k+1} \geq c_{k}  \Rightarrow  (\forall k \in \mathbb{N})~c_{k+1+i} \geq c_{k+i}.
	\end{align*}
	Hence, for every $i \in \mathbb{N}$,  applying	\cref{prop:weaklyconvergePPA} with $(\forall k \in \mathbb{N})$ $x_{k}=\xi_{k}(i)$, $c_{k}=c_{k+i}$, and $\lambda_{k}=\lambda_{k+i}$, we know that any one of the assumptions (A1) and (A2) ensures that  $\lr{\xi_{k}(i)}_{k \in \mathbb{N}}$
	converges weakly to a point in $\zer A$.
	
	Therefore, due to 	\cref{corollary:stabilityxkconverge}, we obtain that $(x_{k})_{k \in \mathbb{N}}$ converges weakly to a point in $\zer A$. 
\end{proof}

The following \cref{corollary:PPAinexact} demonstrates a weak convergence result on the inexact version of the proximal point algorithm. Notice that we don't require the popular assumption $\inf_{k \in \mathbb{N}} c_{k} >0$ in \cref{corollary:PPAinexact}. 
\begin{corollary} \label{corollary:PPAinexact}
Let $y_{0}$ and $(e_{k})_{k\in \mathbb{N}}$ be in $\mathcal{H}$, let $(\eta_{k})_{k \in \mathbb{N}}$ be in $\mathbb{R}_{+}$, and let $(c_{k})_{k \in \mathbb{N}}$ be in $\mathbb{R}_{++}$. Define $	\lr{\forall k \in \mathbb{N}}$ $y_{k+1} =\J_{c_{k} A}y_{k} +\eta_{k}e_{k}$.
	Suppose that $\sum_{k \in \mathbb{N}} \eta_{k} \norm{e_{k}} < \infty$ and that $\sum_{k \in \mathbb{N}} c_{k}^{2}=\infty$. Then $(y_{k})_{k \in \mathbb{N}}$ converges weakly to a point in $\zer A$.
\end{corollary}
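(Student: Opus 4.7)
The plan is to recognize that this corollary is a direct specialization of \cref{theorem:GPPAWeakStability} under assumption (A1). Setting $\lambda_{k} \equiv 1$ in the general iteration scheme \cref{eq:section:weakstrongGPPAxk}, one obtains
\begin{align*}
x_{k+1} = (1-1)x_{k} + 1 \cdot \J_{c_{k} A}x_{k} + \eta_{k}e_{k} = \J_{c_{k} A}x_{k} + \eta_{k}e_{k},
\end{align*}
which is precisely the recursion defining $(y_{k})_{k \in \mathbb{N}}$ once we take $x_{0} := y_{0}$; consequently $x_{k} = y_{k}$ for every $k \in \mathbb{N}$.

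With this identification, the choice $\lambda_{k} \equiv 1$ together with the hypothesis $\sum_{k \in \mathbb{N}} c_{k}^{2} = \infty$ is exactly assumption (A1) of \cref{theorem:GPPAWeakStability}. The remaining hypothesis $\sum_{k \in \mathbb{N}} \eta_{k}\norm{e_{k}} < \infty$ is also assumed verbatim. Invoking \cref{theorem:GPPAWeakStability} then yields the required weak convergence of $(x_{k})_{k \in \mathbb{N}} = (y_{k})_{k \in \mathbb{N}}$ to a point in $\zer A$.

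There is no genuine obstacle here, since the full content lies in the earlier theorem, whose proof in turn passes through \cref{corollary:stabilityxkconverge} applied to the translated exact iterates and \cref{prop:weaklyconvergePPA}. The point worth flagging in the write-up, however, is the notable feature the authors emphasize in the preamble to \cref{theorem:JckAweakconverckc}: the hypothesis $\inf_{k \in \mathbb{N}} c_{k} > 0$, ubiquitous in the classical weak convergence theory of the inexact proximal point algorithm, is \emph{not} required. In particular, the regularization parameters $c_{k}$ may be chosen to tend to zero, provided only that they do so slowly enough for $\sum_{k \in \mathbb{N}} c_{k}^{2}$ to diverge (as is the case, e.g., for $c_{k} = 1/\sqrt{k+1}$). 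This is the genuine novelty the corollary is meant to showcase.
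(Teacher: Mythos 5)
Your proposal is correct and coincides with the paper's own proof, which likewise disposes of the corollary in one line by observing that it is exactly \cref{theorem:GPPAWeakStability} under assumption (A1) with $\lambda_{k}\equiv 1$ and $x_{0}=y_{0}$. The additional remark about dispensing with $\inf_{k\in\mathbb{N}}c_{k}>0$ matches the paper's own commentary preceding the corollary.
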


\begin{proof}
	This is exactly \cref{theorem:GPPAWeakStability} under the assumption (A1). 
\end{proof}

To end this section, we provide some sufficient conditions for the strong convergence of generalized proximal point algorithms below. 
\begin{theorem} \label{theorem:inexactGPPAstrong}
	Let $\bar{x}$ be in $\zer A$. Suppose that $A$ is metrically subregular at $\bar{x}$ for $0 \in A\bar{x}$, i.e., 
	\begin{align*}  
	(\exists \kappa >0) (\exists \delta >0) (\forall x \in B[\bar{x}; \delta]) \quad \dist \lr{x, A^{-1}0} \leq \kappa \dist \lr{0, Ax}.
	\end{align*}
	Let $\tau \in \mathbb{R}_{++}$ such that $\tau + \sum_{k \in \mathbb{N}} \eta_{k} \norm{e_{k}} \leq \delta$ and let $x_{0} \in B[\bar{x}; \tau]$.
	Suppose that $ \inf_{k \in \mathbb{N}} c_{k} >0$, that $(\forall k \in \mathbb{N})$ $\lambda_{k} \in \left]0,2\right[\,$, and that $0 <  \liminf_{k \to \infty} \lambda_{k} \leq \limsup_{k \to \infty} \lambda_{k} < 2$.
	Then $(x_{k})_{k \in \mathbb{N}}$ converges strongly to a point in $\zer A$.
\end{theorem}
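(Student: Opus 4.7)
The plan is to recognize the iteration as an instance of the inexact non-stationary Krasnosel'ski\v{\i}-Mann iterations and apply \cref{proposition:KMI} with $T_{k} := \J_{c_{k} A}$ and $\alpha_{k} := \frac{1}{2}$ for every $k \in \mathbb{N}$. It therefore suffices to verify each hypothesis of \cref{proposition:KMI} in the present setting.

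Since $A$ is maximally monotone, \cref{fact:cAMaximallymonotone} combined with \cref{fact:firmlynonexpansiveaveraged} implies that each $\J_{c_{k} A}$ is $\frac{1}{2}$-averaged, while \cref{fact:FixJcAzerA} yields $\Fix \J_{c_{k} A} = \zer A$ for every $k$. Hence $C := \cap_{k \in \mathbb{N}} \Fix \J_{c_{k} A} = \zer A$ contains $\bar{x}$, and $\lambda_{k} \in \left]0,2\right[ \,= \left]0, 1/\alpha_{k}\right[$ lies in the required interval. With $\alpha := \limsup_{k \to \infty} \alpha_{k} = \frac{1}{2}$, the assumption $0 < \liminf_{k \to \infty} \lambda_{k} \leq \limsup_{k \to \infty} \lambda_{k} < 2 = 1/\alpha$ is precisely the prescribed bound on $(\lambda_{k})_{k \in \mathbb{N}}$.

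For the metric subregularity hypothesis, I invoke \cref{lemma:metricallysubregularEQ}\cref{lemma:metricallysubregularEQ:A} with $\gamma = c_{k}$: the metric subregularity of $A$ at $\bar{x}$ for $0$ on $B[\bar{x}; \delta]$ transfers to the metric subregularity of $\Id - \J_{c_{k} A}$ at $\bar{x}$ for $0 = (\Id - \J_{c_{k} A})\bar{x}$ on the \emph{same} neighborhood $B[\bar{x}; \delta]$ with modulus at most $\gamma_{k} := 1 + \kappa/c_{k}$. Thus one may take $\delta_{k} \equiv \delta$, so that $\inf_{k \in \mathbb{N}} \delta_{k} = \delta > 0$, and, using $\inf_{k \in \mathbb{N}} c_{k} > 0$, $\gamma := \limsup_{k \to \infty} \gamma_{k} \leq 1 + \kappa/\inf_{k \in \mathbb{N}} c_{k} < \infty$, as required.

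Finally, setting $\epsilon := \sum_{k \in \mathbb{N}} \eta_{k} \norm{e_{k}}$ and $\hat{\delta} := \tau$, the assumption $\tau + \epsilon \leq \delta$ yields both $\epsilon \leq \delta - \tau < \delta$ and $\hat{\delta} \in \left]0, \delta - \epsilon\right]$, while $x_{0} \in B[\bar{x}; \tau] = B[\bar{x}; \hat{\delta}]$ by hypothesis. Every hypothesis of \cref{proposition:KMI} is now satisfied, so the conclusion that $(x_{k})_{k \in \mathbb{N}}$ converges strongly to a point in $C = \zer A$ follows at once. The only non-routine ingredient is the uniform-in-$k$ transfer of metric subregularity from $A$ to $\Id - \J_{c_{k} A}$, which is already isolated in \cref{lemma:metricallysubregularEQ}; the role of $\inf_{k \in \mathbb{N}} c_{k} > 0$ is precisely to keep the modulus $\gamma_{k}$ uniformly bounded, and I do not anticipate any genuine obstacle beyond matching the constants.
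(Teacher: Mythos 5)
Your proof is correct: every hypothesis of \cref{proposition:KMI} is verified accurately (the resolvents are $\tfrac{1}{2}$-averaged with common fixed point set $\zer A$ by \cref{fact:cAMaximallymonotone,fact:firmlynonexpansiveaveraged,fact:FixJcAzerA}; \cref{lemma:metricallysubregularEQ}\cref{lemma:metricallysubregularEQ:A} does give the subregularity of $\Id-\J_{c_{k}A}$ on the \emph{same} ball $B[\bar{x};\delta]$ with modulus $1+\kappa/c_{k}$, so $\delta_{k}\equiv\delta$ and $\gamma=\limsup_{k}\gamma_{k}\leq 1+\kappa/\inf_{k}c_{k}<\infty$; and the bookkeeping $\hat{\delta}:=\tau$, $\epsilon:=\sum_{k}\eta_{k}\norm{e_{k}}<\delta$ matches). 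Your route differs mildly from the paper's: the paper does not cite \cref{proposition:KMI} here, but instead re-runs the translated-sequence argument by hand --- it defines $G_{k}:=(1-\lambda_{k})\Id+\lambda_{k}\J_{c_{k}A}$, uses \cref{theorem:KMBasic}\cref{theorem:KMBasic:induction} to place each $\xi_{0}(i)=x_{i}$ in $B[\bar{x};\delta]$, applies the GPPA-specific $R$-linear convergence result \cref{theorem:exactGPPA} to each translated exact sequence, and then invokes the stability result \cref{corollary:stabilityxkconverge}. Since \cref{proposition:KMI} is itself proved by exactly that translation-plus-stability scheme (via \cref{theorem:KMI} and \cref{corollary:stabilityxkconverge}), and \cref{theorem:exactGPPA} is \cref{theorem:KMI} specialized to resolvents, the two proofs rest on identical ingredients; yours is the more economical packaging, at the cost of leaning on the general theorem whose hypotheses (uniform $\delta_{k}$, bounded $\gamma_{k}$, the $\hat{\delta}$ condition) you must --- and do --- check one by one.
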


\begin{proof}
	Define 
	\begin{align*}
\lr{\forall k \in \mathbb{N}} \quad   G_{k} := \lr{1-\lambda_{k}} \Id +\lambda_{k} \J_{c_{k} A} \quad \text{and} \quad (\forall i \in \mathbb{N})	~  \xi_{k+1}(i)=G_{k+i}  \xi_{k}(i) \text{ and } \xi_{0}(i)=x_{i}. 
\end{align*}
	Bearing \Cref{fact:cAMaximallymonotone,fact:firmlynonexpansiveaveraged,fact:Averagedlambdaalpha,fact:FixJcAzerA} in mind, we deduce that $(\forall k \in \mathbb{N})$ $G_{k}$ is nonexpansive and that
	\begin{align*}
	(\forall k \in \mathbb{N}) \quad \Fix G_{k} =  \Fix \J_{c_{k} A} = \zer A \neq \varnothing,
	\end{align*}
	which, via  \cref{fact:MaximallyMonotoneCC}, guarantees that $\cap_{k \in \mathbb{N}} \Fix G_{k} =\zer A$ is nonempty closed and convex. 
	
	Moreover, applying	
		\cref{theorem:KMBasic}\cref{theorem:KMBasic:induction} with $(\forall k \in \mathbb{N})$ $T_{k}= \J_{c_{k} A} $ and $\alpha_{k} =\frac{1}{2}$,  	
		we observe that 
	\begin{align*}
	(\forall i \in \mathbb{N}) \quad  \norm{ \xi_{0}(i) -\bar{x}} =\norm{ x_{i} -\bar{x}}  \leq  \norm{x_{0} -\bar{x}} + \sum_{k \in \mathbb{N}} \eta_{k} \norm{e_{k}} \leq \delta. 
	\end{align*}

	Hence,	for every $i \in \mathbb{N}$, invoking  our assumptions and applying \cref{theorem:exactGPPA}\cref{theorem:exactGPPA:moreassum}\textcolor{blue}{iii.\,}with $(\forall k \in \mathbb{N})$ $\lambda_{k}=\lambda_{k+i}$, $c_{k}=c_{k+i}$, and $x_{k} =\xi_{k}(i)$,   we deduce that 
	$\lr{\xi_{k}(i) }_{k \in \mathbb{N}}$ converges $R$-linearly (hence, converges strongly) to a point in $\zer A$.

	Furthermore, due to 	\cref{corollary:stabilityxkconverge}, we deduce that $(x_{k})_{k \in \mathbb{N}}$ converges strongly to a point in $\zer A$.
\end{proof}

\section{Conclusion and Future Work} \label{section:Conclusion}

In this work, applying  a specific instance of  \cite[Lemma~2.1]{Lemaire1996} which originates from  \cite[Remark~14]{BrezisLions1978}, we  deduce  the weak convergence (resp.\,strong convergence) of the iteration method (involving nonexpansive operators) with both approximation and perturbation from the weak convergence (resp.\,strong convergence) of associated translated basic methods.  Based on this result and some beautiful properties of averaged operators, by employing some convergence results on the classical Krasnosel'ski\v{\i}-Mann iterations, we establish the weak and strong convergence of relaxation variants of Krasnosel'ski\v{\i}-Mann iterations for finding a fixed point of the associated nonexpansive operators. In addition, bearing the powerful properties of the resolvent of maximally monotone operators in mind, we apply all previous convergence results and proof techniques to generalized proximal point algorithms and obtain some weak and strong convergence results on generalized proximal point algorithms for solving monotone inclusion problems.

Note that relaxation variants of Krasnosel'ski\v{\i}-Mann iterations  and generalized proximal point algorithms cover generalized versions of multiple popular optimization algorithms. 
In the future, by specifying the nonexpansive operators in relaxation variants of Krasnosel'ski\v{\i}-Mann iterations or the maximally monotone operators in generalized proximal point algorithms accordingly, 
we shall elaborate applications of our convergence results for more particular algorithms such as the  Douglas-Rachford splitting algorithm, the three-operator splitting schemes,    the alternating direction method of multipliers, and so on. 


\addcontentsline{toc}{section}{References}

\bibliographystyle{abbrv}

\end{document}